\documentclass[a4paper]{amsart}

\usepackage{geometry}          
\usepackage{amsmath, amstext, ,amscd, amssymb, mathrsfs, amsthm}
\usepackage{stmaryrd}
\usepackage{mathrsfs}
\usepackage{hyperref}
\usepackage{enumitem}
\usepackage[all]{xy}

\newcommand{\R}{\mathbb{R}}
\newcommand{\C}{\mathbb{C}}

\newcommand{\Z}{\mathbb{Z}}
\newcommand{\N}{\mathbb{N}}

\newcommand{\F}{\mathrm{F}}
\newcommand{\G}{\mathrm{G}}

\newcommand{\smooth}{\mathscr{C}^{\infty}}
\newcommand{\D}{\mathcal{D}}
\newcommand{\sob}{\boldsymbol{H}}

\newcommand{\ic}{\sqrt{-1}}
\newcommand{\lie}{\mathcal{L} }
\newcommand{\bw}{\overline{w}}

\newcommand{\derpar}[2]{\frac{\partial#1}{\partial#2}}

\newcommand{\e}{\varepsilon}

\newcommand{\E}{\mathbb{E}}

\newcommand{\FF}{\mathscr{F}}
\newcommand{\Ad}{\mathrm{Ad}}

\newcommand{\db}{\bar{\partial}}
\newcommand{\Wedge}{\Lambda^{0,\bullet}}

\newcommand{\End}{\mathrm{End}}
\newcommand{\Sp}{\mathrm{Sp}}

\newcommand{\LL}{\mathscr{L}}


\newcommand{\kahler}{\text{K\"{a}hler }}
\newcommand{\n}{\nabla}
\newcommand{\ol}{\overline}

\newcommand{\limarrow}[2]{\xrightarrow[#1\to #2]{}}
\newcommand{\isom}{\overset{\sim}{\longrightarrow}}

\newcommand{\g}{\mathfrak{g}}
\newcommand{\wt}[1]{\widetilde{#1}}
\newcommand{\boldrm}[1]{{\boldsymbol{\mathrm{#1}}}}
\newcommand{\wh}[1]{\widehat{#1}}
\newcommand{\V}{\mathcal{V}_{\gamma}}

\renewcommand{\Re}{\mathrm{Re}}
\renewcommand{\Im}{\mathrm{Im}}

\DeclareMathOperator{\Hom}{Hom}

\DeclareMathOperator{\rank}{rk}

\DeclareMathOperator{\Id}{Id} 
\DeclareMathOperator{\supp}{supp}
\DeclareMathOperator{\tr}{Tr}

\newtheorem{prop}{Proposition}[section]
\newtheorem{thm}[prop]{Theorem}

\newtheorem{lemme}[prop]{Lemma}

\newtheorem{cor}[prop]{Corollary}
\newtheorem{assumption}[prop]{Assumption}

\theoremstyle{definition}

\newtheorem{defn}[prop]{Definition}

\theoremstyle{remark}

\newtheorem{rem}[prop]{Remark}

\pagestyle{headings}     
  
\numberwithin{equation}{section}
\setcounter{section}{-1}

\begin{document}

\title{$G$-invariant Holomorphic Morse inequalities}
\date{\today}

\author{Martin {\sc{Puchol}}} 
\address{Universit\'{e} Paris Diderot--Paris 7, Campus des Grands Moulins, B\^{a}timent Sophie Germain, case 7012, 75205 Paris Cedex 13}
\email{martin.puchol@imj-prg.fr}

\begin{abstract}
Consider an action of a connected compact Lie group on a compact complex manifold $M$, and two equivariant vector bundles $L$ and $E$ on $M$, with $L$ of rank 1. The purpose of this paper is to establish holomorphic Morse inequalities \`{a} la Demailly for the invariant part of the Dolbeault cohomology of tensor powers of $L$ twisted by $E$. To do so, we define a moment map $\mu$ by the Kostant formula and we define the reduction of $M$ under a natural hypothesis on $\mu^{-1}(0)$. Our inequalities are given in term of the curvature of the bundle induced by $L$ on this reduction.
\end{abstract}

\maketitle    
\tableofcontents


\section{Introduction}

Morse Theory investigates the topological information carried by Morse functions on a manifold and in particular their critical points. Let $f$ be a Morse function on a compact manifold of real dimension $n$. We suppose that $f$ has isolated critical points. Let $m_j$, $(0 \leq j \leq n)$ be the the number of critical points of $f$ of Morse index  $j$, and let $b_j$ be the Betti numbers of the manifold. Then the strong Morse inequalities states that for $0\leq q \leq n$,
\begin{equation}
\label{MI-reelles}
\sum_{j=0}^q(-1)^{q-j} b_j \leq \sum_{j=0}^q(-1)^{q-j} m_j,
\end{equation}
with equality if $q=n$. From \eqref{MI-reelles}, we get the weak Morse inequalities:
\begin{equation}
b_j\leq m_j \qquad \text{for} \quad 0\leq j\leq n.
\end{equation}

In his seminal paper \cite{MR683171}, Witten  gave an analytic proof of the Morse inequalities by analyzing the spectrum of the Schr\"{o}dinger operator $\Delta_t=\Delta +t^2|df|^2+tV$, where $t>0$ is a real parameter and $V$ an operator of order 0. For $t\to +\infty$, Witten shows that the spectrum of $\Delta_t$ approaches in some sense the spectrum of a sum of harmonic oscillators attached to the critical point of $f$.  

In \cite{demailly}, Demailly established analogous asymptotic Morse inequalities for the Dolbeault cohomology associated with high tensor powers $L^p:=L^{\otimes p}$ of a holomorphic Hermitian line bundle $(L,h^L)$ over a compact complex manifold $(M,J)$.  The inequalities of Demailly give asymptotic bounds on the Morse sums of the Betti numbers of $\db$ on $L^p$ in terms of certain integrals of the Chern curvature $R^L$ of $(L,h^L)$. More precisely,  we define $\dot{R}^L\in \End(T^{(1,0)}M)$ by $g^{TM}(\dot{R}^Lu,\ol{v}) =R^L(u,\ol{v})$ for $u,v\in T^{(1,0)}M$, where $g^{TM}$ is a $J$-invariant Riemannian metric on $TM$. We denote by $M(\leq q)$ the set of points where $\dot{R}^L$ is non-degenerate and have at most $q$ negative eigenvalues, and we set $n=\dim_\C M$. Then we have for $0\leq q \leq n$
\begin{equation}
\label{IM}
\sum_{j=0}^q (-1)^{q-j} \dim H^j(M, L^p) \leq \frac{p^n}{n!} \int_{M(\leq q)} (-1)^q \left( \frac{\ic}{2\pi}R^{L}\right)^{n} +o(p^n),
\end{equation}
with equality if $q=n$. Here $H^j(M,L^p)$ denotes the Dolbeault cohomology in bidegree $(0,j)$, which is also the $j$-th group of cohomology of the sheaf of holomorphic sections of $L^p$. 

These inequalities have found numerous applications. In particular, Demailly used them in \cite{demailly} to find new geometric characterizations of Moishezon spaces, which improve Siu's solution in \cite{MR755233,MR797421} of the Grauert-Riemenschneider conjecture \cite{MR0302938}. Another notable application of the holomorphic Morse inequalities is the proof of the effective Matsusaka theorem by Siu \cite{MR1275204,MR1603616}. Recently, Demailly used these inequalities in \cite{MR2918158} to prove a significant step of a generalized version of the Green-Griffiths-Lang conjecture.

To prove these inequalities, the key remark of Demailly was that in the formula for the Kodaira Laplacian $\square_p$ associated with $L^p$, the metric of $L$ plays formally the role of the Morse function in the paper Witten \cite{MR683171}, and that the parameter $p$ plays the role of the parameter $t$. Then the Hessian of the Morse function becomes the curvature of the bundle. The proof of Demailly was based on the study of the semi-classical behavior as $p\to +\infty$ of the spectral counting functions of $\square_p$. Subsequently, Bismut gave an other proof of  the holomorphic Morse inequalities in \cite{MR886814} by adapting his heat kernel proof of the Morse inequality \cite{MR852155}. The key point is that we can compare the left hand side of \eqref{IM} with the alternate trace of the heat kernel acting on forms of degree $\leq q$, i.e.,
\begin{equation}
\label{Morse-sum-and-supertrace}
\sum_{j=0}^q (-1)^{q-j} \dim H^j(M, L^p) \leq \sum_{j=0}^q (-1)^{q-j} \tr^{\Omega^{0,j}(M,L^p)}\left[ \exp\Big(-\frac{u}{p}\square_p\Big)\right],
\end{equation}
with equality if $q=n$. Then, Bismut obtained the holomorphic Morse inequalities by showing the convergence of the heat kernel thanks to probability theory. Demailly \cite {MR1128538} and Bouche \cite{MR1056777} gave an analytic approach of this result. In \cite{ma-marinescu}, Ma and Marinescu  gave a new proof of this convergence, replacing the probabilistic arguments of Bismut  \cite{MR886814} by arguments inspired by the analytic localization techniques of Bismut-Lebeau \cite[Chap. 11]{bismut-lebeau}.

When the bundle $L$ is positive, \eqref{IM} is a consequence of the Hirzebruch-Riemann-Roch theorem and of the Kodaira vanishing theorem, and  reduces to
\begin{equation}
\dim H^0(M, L^p)=\frac{p^n}{n!} \int_{M}  \big( \frac{\ic}{2\pi}R^{L}\big)^{n} +o(p^n).
\end{equation}
 In this case, a local estimate can be obtained by the study of the asymptotic of the Bergman kernel (the kernel of the orthogonal projection from $\smooth(M,L^p)$ onto $H^0(M,L^p)$) when $p\to+\infty$.  We refer to \cite{ma-marinescu} and the reference therein for the study of the Bergman kernel. 

In the equivariant case, a connected compact Lie group $G$ acts on  $M$ and its action lifts on $L$. When $L$ is positive, Ma and Zhang \cite{ma-zhang} have studied the invariant Bergman kernel, i.e., the kernel of the projection from $\smooth(M,L^p)$ onto the $G$-invariant part of $H^0(M,L^p)$. Let $\mu$ be the moment map associated with the $G$-action on $M$ (see \eqref{def-mu}). Ma and Zhang \cite{ma-zhang} established that the invariant Bergman kernel concentrate to any neighborhood $U$ of $\mu^{-1}(0)$, and that near $\mu^{-1}(0)$, we have a full off-diagonal asymptotic development. They also obtain a fast decay of the invariant Bergman kernel in the normal directions to $\mu^{-1}(0)$, which does not appear in the classical case. 

In this paper, we establish $G$-invariant holomorphic Morse inequalities  under certain natural condition, in the context of Ma-Zhang \cite{ma-zhang} but without the assumption that $L$ is  positive.

More precisely, we consider an action of a connected compact Lie group $G$ on a compact complex manifold $M$ and two $G$-equivariant vector bundles $L$ and $E$ on $M$, with $L$ of rank 1, and we  establish asymptotic holomorphic Morse inequalities similar to \eqref{IM} for the $G$-invariant part of the Dolbeault cohomology of $ L^p\otimes E$ (see Theorems \ref{IMI} and \ref{IMI-general}). To do so, we define a ``moment map" $\mu\colon M\to \mathrm{Lie}(G)$ by the Kostant formula and we define the reduction of $M$ under natural hypothesis on $\mu^{-1}(0)$ (see Assumption \ref{assum-regular-value}). Our inequalities are then given in term of the curvature of the bundle induced by $L$ on this reduction, and the integral in \eqref{IM} will be over subsets of the reduction.

 A new feature in our setting when compared to Demailly's result is the localization near $\mu^{-1}(0)$. We use a heat kernel method inspired by \cite{MR886814} (see also \cite[Sect. 1.6-1.7]{ma-marinescu}), the key being that an analogue of \eqref{Morse-sum-and-supertrace} still holds (see Theorem \ref{Euler-et-chaleur}) for the Kodaira Laplacian restricted to the space of invariant forms. We show that the heat kernel will concentrate in any neighborhood $U$ of $\mu^{-1}(0)$, and we study the asymptotic of the heat kernel near $\mu^{-1}(0)$. For this last part, we work with the operator  induced  by the Kodaira Laplacian on the quotient of $U$. However, as we will have to integrate the heat kernel in the normal directions to $\mu^{-1}(0)$, we need a more precise convergence result that in \cite[Sect. 1.6]{ma-marinescu}. Indeed we also need to prove a uniform fast decay of the heat kernel in the normal directions, which is analogous to the decay  encountered in \cite[Thm. 0.2]{ma-zhang}. Our approach is largely inspired by \cite{ma-zhang}.
 
 Note that in the literature, there exists another type of equivariant holomorphic Morse inequalities \cite{MR792703,MR1472894,MR1601858}, which relate the Dolbeault cohomology groups of the fixed point-set of  a compact K\"{a}hler manifold $M$ endowed with an action of a compact connected Lie group $G$ to the Dolbeault cohomology groups of $M$ itself.

We now give more details about our results. Let $(M,J)$ be a connected compact complex manifold. Let $n=\dim_\C M$. Let $(L,h^L)$ be a holomorphic Hermitian line bundle on $M$, and $(E,h^E)$ a holomorphic Hermitian vector bundle on $M$. We denote the Chern (i.e., holomorphic and Hermitian) connections of $L$ and $E$ respectively by $\nabla^L$ and $\nabla^E$, and their respective curvatures by $R^L=( \nabla^L)^2$ and $R^E=( \nabla^E )^2$. Let $\omega$ be the first Chern form of $(L,h^L)$, i.e., the $(1,1)$-form defined by
\begin{equation}
\label{prequantization}
\omega = \frac{\ic}{2\pi} R^L.
\end{equation}
We \emph{do not assume} that $\omega$ is a positive $(1,1)$-form.

Let  $G$ be a connected compact Lie group with Lie algebra $\g$. Let $d=\dim_\R G$. We assume that $G$ acts holomorphically on $(M,J)$, and that the action lifts to a holomorphic action on $L$ and $E$. We assume that $h^L$ and $h^E$ are preserved by the $G$-action. Then $R^L$, $R^E$ and $\omega$ are $G$-invariant forms.

In the sequel, if $F$ is any $G$-representation, we denote by $F^G$ the space of elements of $F$ invariant under the action of $G$. The infinitesimal action of $K\in \g$ on any $F$ will be denoted by $\lie_K^F$, or simply by $\lie_K$ when it entails no confusion.

For  $K\in \g$, let $K^M$ be the vector field on $M$ induced by $K$ (see \eqref{def-KM}). We can define a map $\mu\colon M\to \g^*$  by the Kostant formula
\begin{equation}
\label{def-mu}
\mu(K) = \frac{1}{2i\pi} \left( \n^L_{K^M} - \lie_K \right).
\end{equation}
Then for any $K\in \g$ (see Lemma \ref{mu=moment-thm}),
 \begin{equation}
d\mu (K) = i_{K^M} \omega.
\end{equation}
 Moreover the set defined by
\begin{equation}
\label{def-P}
P=\mu^{-1}(0)
\end{equation}
 is stable by $G$.

We make the following assumption:
\begin{assumption}
\label{assum-regular-value}
$0$ is a regular value of $\mu$.
\end{assumption}

Under Assumption \ref{assum-regular-value}, $P$ is a submanifold. Moreover, by Lemma \ref{action-loc-libre}, $G$ acts locally freely  on $P$, so that the quotient $M_G=P/G$  is an orbifold, which we call the \emph{reduction} of $M$. For definition and basic properties of orbifolds, we refer to \cite[Sect. 5.4]{ma-marinescu} for instance. The projection $P\to M_G$ is denoted by $\pi$.

We denote by $TY$ the tangent bundle of the $G$-orbits in $P$. As $G$ acts locally freely on $P$, we know that $TY=\mathrm{Span}(K^M,K\in \g)$ and that it is a vector bundle on $P$.

The following analogue of the classical \kahler reduction (see \cite{guillemin-sternberg}) holds. 
\begin{thm}
\label{descent}
The complex structure $J$ on $M$ induces a complex structure $J_G$ on $M_G$, for which the orbifold bundles $L_G,E_G$  induced by $L,E$ on $M_G$ are holomorphic. Moreover, the form $\omega$ descends to a form $\omega_G$ on $M_G$ and if $R^{L_G}$ is the Chern curvature of $L_G$ for the metric $h^{L_G}$ induced by $h^L$, then 
\begin{equation}
\omega_G = \frac{\ic}{2\pi} R^{L_G}.
\end{equation}
 Finally,  for $x\in M_G$, $\pi_*$ induces an isomorphism
\begin{equation}
(\ker \omega_G)_x \simeq (\ker \omega)|_{\pi^{-1}(x)}.
\end{equation}
\end{thm}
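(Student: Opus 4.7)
The plan is to adapt the classical \kahler reduction of Guillemin--Sternberg to our pre-symplectic setting, treating the four claims in turn. The orbifold structure is already at hand: $P$ is a submanifold of real dimension $2n-d$ by Assumption \ref{assum-regular-value}, and $G$ acts locally freely on $P$, so $M_G = P/G$ is an orbifold of real dimension $2(n-d)$. The descent of $\omega$ to a 2-form $\omega_G$ on $M_G$ is immediate: $\omega|_P$ is $G$-invariant and for any $K \in \g$ we have $i_{K^M}\omega|_{TP} = d\mu(K)|_{TP} = 0$ since $\mu$ vanishes on $P$; hence $\omega|_P$ is basic for $\pi \colon P \to M_G$ and descends to a unique $\omega_G$ with $\pi^*\omega_G = \omega|_P$. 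Likewise, $G$-equivariance of $(L, h^L)$ and $(E, h^E)$ makes their restrictions to $P$ descend to Hermitian orbifold bundles $L_G, E_G$ on $M_G$, and the identity $\omega_G = \frac{\ic}{2\pi} R^{L_G}$ then follows by pulling back via $\pi$ and comparing with $\omega|_P = \frac{\ic}{2\pi} R^L|_P$, using injectivity of $\pi^*$ on forms.

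The main substance is the descent of $J$, which I would carry out pointwise. For $y \in P$, I would seek a $J$-invariant complement $H_y \subset T_yP$ to $T_yY$; then $J_G := J|_{H_y}$ defines an almost complex structure on $T_{\pi(y)}M_G \simeq H_y$, independent of $H_y$ by $G$-equivariance. To produce $H_y$, I would average $g^{TM}$ over $G$ to get a $G$-invariant $J$-compatible metric, and then modify the $g$-orthogonal complement of $T_yY$ in $T_yP$ so as to arrange $J$-invariance --- in the \kahler setting this is automatic, but without positivity of $\omega$ one must exploit the $(1,1)$-type identity $\omega(J\cdot,J\cdot) = \omega(\cdot,\cdot)$ together with $d\mu(K) = i_{K^M}\omega$ to translate $J$-invariance into pairing conditions on the $JK^M$. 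Integrability of $J_G$ would then follow from integrability of $J$ on $M$ plus the fact that $G$ preserves $P$, so $[TY,TP] \subset TP$. With $J_G$ in hand, the operators $\db^L$ and $\db^E$ descend by $G$-invariance to $\db$-operators on $L_G$ and $E_G$, providing the desired holomorphic structures.

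For the kernel isomorphism, fix $y \in \pi^{-1}(x)$. First, $(\ker \omega)_y \subset T_yP$: if $\omega(v,\cdot) = 0$ then $d\mu(K)(v) = \omega(K^M, v) = 0$ for all $K$, so $v \in \ker d\mu = T_yP$, and $\pi_* v$ is defined. Its image lies in $\ker \omega_G$ since $\omega_G(\pi_*v, \pi_* w) = \omega(v, w) = 0$ for $w \in T_yP$. For injectivity, if $\pi_* v = 0$ and $v \in (\ker \omega)_y$, then $v = K^M_y$ for some $K \in \g$; then $v \in \ker \omega$ gives $d\mu(K) = 0$, hence $K = 0$ by the regular-value hypothesis. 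For surjectivity, given $\bar v \in (\ker \omega_G)_x$ with any lift $v_0 \in T_yP$, the 1-form $\omega(v_0, \cdot)$ vanishes on $T_yP$, hence lies in the annihilator of $T_yP$ in $T_y^*M$, which is spanned by $\{d\mu(K) = \omega(K^M, \cdot) : K \in \g\}$; so $\omega(v_0, \cdot) = \omega(K^M, \cdot)$ for some $K$, and $v := v_0 - K^M$ is the sought element of $(\ker \omega)_y$. I expect the main obstacle throughout to be the construction of $J_G$: in the absence of positivity of $\omega$, the direct \kahler-reduction argument requires careful adaptation, whereas the remaining claims reduce to formal consequences of $G$-equivariance and the moment-map identity.
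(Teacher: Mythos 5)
Your proposal gets the overall architecture right and the easy parts (descent of $\omega$, the kernel isomorphism) are correctly handled, essentially as the paper does them. But the central construction is left as a gap, and that gap is precisely where the paper's main work lies.

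You correctly identify that the substance is producing a $J$-invariant, $G$-invariant complement $H_y$ to $T_yY$ inside $T_yP$. However, you say you would ``modify the $g$-orthogonal complement so as to arrange $J$-invariance'' without saying how, and your appeal to the $(1,1)$-identity and $d\mu(K)=i_{K^M}\omega$ is too vague to close the argument. In fact, a $J$-invariant complement to $TY$ inside $TP$, if it exists, is forced to be $TP\cap JTP$, and the entire issue is whether this intersection has dimension $2(n-d)$ --- equivalently, whether $TP+JTP=TM$ along $P$. The paper's resolution is the bilinear form $b^L(\cdot,\cdot)=\omega(\cdot,J\cdot)$: one shows (Lemma \ref{bL-ND}) that $b^L|_{TY\times TY}$ is non-degenerate on $P$, essentially because $b^L(K^M,JV)=-d\mu(V)(K)$ and $d\mu$ has full rank there. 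This yields $TU|_P=TP\oplus JTY$ and $(TY)^{\perp_{b^L}}|_P=JTP$ (Lemma \ref{TY^perp=JTP}), whence $T^HP:=TP\cap JTP$ is the required $J$-invariant horizontal bundle. Nothing is ``modified'' from the $g$-orthogonal complement; one uses the indefinite pairing $b^L$ instead of $g$. This non-degeneracy of $b^L$ on $TY$ is also what later produces the signature $(r,d-r)$ that enters Theorem \ref{IMI}, so it is indispensable and not merely one route among several.

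Two further consequences of this gap propagate. First, your integrability sketch (``$[TY,TP]\subset TP$'') does not do the job: the paper's argument (Lemma \ref{J_G-integrable}) takes $u,v\in T^{1,0}M_G$, lifts to $T^{1,0}M\cap T_\C P$, writes $[u^H,v^H]=W-\ic JW$ with $W,JW\in TP$, and concludes $W\in TP\cap JTP=T^HP$; the identification $T^HP=TP\cap JTP$ is used crucially, not just that $G$ preserves $P$. Second, the claim that $L_G$, $E_G$ are holomorphic with $\n^{L_G}$ the Chern connection (and hence that $R^{L_G}$ in the statement is the Chern curvature) is not merely ``$\db^L$ descends by $G$-invariance''; the paper proves (Lemma \ref{L_G-holomorphic}) that $R^{L_G}$ and $R^{E_G}$ are $(1,1)$-forms, which again requires knowing $T^HP$ is $J$-invariant and integrable, and for $E_G$ requires showing $\Theta|_{T^HP\times T^HP}$ is $(1,1)$. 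So the proposal is sound in spirit but is missing the one technical idea --- non-degeneracy of $b^L$ on $TY$ and the resulting splitting $TU|_P=T^HP\oplus TY\oplus JTY$ --- on which the whole theorem actually rests.
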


Let $b^L$ be the bilinear form on $TM$
\begin{equation}
\label{def-b^L}
b^L(\cdot \, , \cdot) = \frac{\ic}{2\pi} R^L(\cdot \,, J\cdot)=\omega(\cdot \,, J\cdot).
\end{equation} 
Then we will show in Lemma \ref{bL-ND} that when restricted to $TY\times TY$, the bilinear form $b^L$ is non-degenerate on $P$. In particular, the signature of $b^L|_{TY\times TY}$ is constant on $P$. We denote by $(r,d-r)$ this signature, i.e., in any orthogonal (with respect to $b^L$) basis of $TY|_P$, the matrix of $b^L$ will have $r$ negative diagonal elements and $d-r$ positive diagonal elements.

We define $\dot{R}^{L_G}\in \End(T^{(1,0)}M_G)$ by $g(\dot{R}^{L_G}u,\ol{v}) =R^{L_G}(u,\ol{v})$ for $u,v\in T^{(1,0)}M_G$, where $g$ is a $J_G$-invariant Riemannian metric on the orbifold tangent bundle $TM_G$. We denote by $M_G(q)$ the set of $x\in M_G$ such that $\dot{R}^{L_G}_x$ is invertible and has exactly $q$ negative eigenvalues, with the convention that if $q\notin \{0,\dots,n-d\}$, then $M_G(q)=\emptyset$. Set $M_G(\leq q) = \cup_{i\leq q} M_G(i)$. Note that $M_G(q)$ does not depend on the metric $g$.

As $G$ preserves every structure we are given, it acts naturally on the Dolbeault cohomology $H^\bullet (M,L^p\otimes E)$. The following theorem is the main result of our paper.
\begin{thm}
\label{IMI}
 Assume that $G$ acts effectively on $M$ (i.e., the only element of $G$ acting as $\Id_M$ is the identity). Then as $p\to +\infty$, the following strong Morse inequalities hold for $q\in \{ 1,\dots, n\}$
 \begin{equation}
\sum_{j=0}^q (-1)^{q-j}\dim H^j(M,L^p\otimes E)^G \leq \rank(E) \frac{p^{n-d}}{(n-d)!}  \int_{M_G(\leq q-r)} (-1)^{q-r} \omega_G^{n-d} + o(p^{n-d}),
\end{equation}
with equality for $q=n$.

In particular, we get the weak Morse inequalities
 \begin{equation}
\dim H^q(M,L^p\otimes E)^G \leq \rank(E) \frac{p^{n-d}}{(n-d)!}  \int_{M_G( q-r)} (-1)^{q-r} \omega_G^{n-d} + o(p^{n-d}).
\end{equation}
\end{thm}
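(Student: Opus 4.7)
The plan is to adapt the heat-kernel proof of the Demailly inequalities developed in the non-equivariant case in \cite{MR886814,MR1128538,MR1056777,ma-marinescu} to the invariant setting. The starting point is the invariant analogue of \eqref{Morse-sum-and-supertrace} announced as Theorem \ref{Euler-et-chaleur}: for every $u>0$ and every $0\leq q\leq n$,
\[
\sum_{j=0}^q (-1)^{q-j}\dim H^j(M,L^p\otimes E)^G \leq \sum_{j=0}^q (-1)^{q-j}\tr^{\Omega^{0,j}(M,L^p\otimes E)^G}\!\bigl[\exp(-\tfrac{u}{p}\square_p)\bigr],
\]
with equality for $q=n$. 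The problem therefore reduces to the $p\to\infty$ asymptotics of the right-hand side for fixed $u>0$, followed by Demailly's algebraic manipulation of the Mehler-type limit as $u\to\infty$ to convert pointwise traces into integrals over $M_G(\leq q-r)$.

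The first step is a localization result: for any $G$-invariant open neighborhood $U$ of $P=\mu^{-1}(0)$, the contribution of $M\setminus U$ to the invariant heat supertrace is $O(p^{-\infty})$. The mechanism is a Bochner-type lower bound on $\frac{1}{p}\square_p$ restricted to invariant sections: outside $U$ some fundamental vector field $K^M$ is uniformly nonzero, and together with the Kostant formula \eqref{def-mu} this forces a spectral gap of order $p$ on the invariant part. Combined with a finite propagation speed argument for the wave equation, the spectral gap yields the required off-diagonal estimate and localizes the analysis to an arbitrarily small $G$-invariant tubular neighborhood of $P$.

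The heart of the proof is the asymptotic analysis in such a tubular neighborhood. Using $G$-invariant Fermi-type coordinates $(y,Z)$ with $y\in P$ and $Z$ in the normal bundle $\num$ to $P$, one rescales $Z\mapsto Z/\sqrt{p}$ and considers $\square_p/p$ as in \cite{ma-marinescu,ma-zhang}; the rescaled operator should converge to a model operator $\mathscr{L}_0$ on $\num\oplus TY$, a sum of harmonic oscillators in the normal directions and in the orbit directions, the latter being nondegenerate with signature $(r,d-r)$ thanks to Lemma \ref{bL-ND}. I expect the main obstacle to be, as announced in the introduction, a uniform Gaussian decay estimate in $Z$ for the rescaled invariant heat kernel, analogous to \cite[Thm.~0.2]{ma-zhang}, which is required to justify integration over the noncompact normal fibre after rescaling. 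I would prove it by pushing the Bismut-Lebeau analytic localization technique \cite[Chap.~11]{bismut-lebeau} through the equivariant setting, controlling the resolvents of the rescaled operators by Sobolev estimates uniform in $p$, and exploiting the confining potential of $\mathscr{L}_0$.

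Assembling the pieces, the pointwise supertrace of the heat kernel of $\mathscr{L}_0$ factorizes into a harmonic-oscillator piece in the $TY$-direction, whose invariant supertrace equals $(-1)^r$ and is concentrated in form degree $r$, and a Mehler-type piece in the $\num$-direction governed by the transverse curvature, which by Theorem \ref{descent} is exactly $R^{L_G}$. Integrating in $Z$, pushing the resulting density down to $M_G$ via $\pi$, and reindexing converts the Morse sum on $M$ of degree $\leq q$ into a Morse sum on $M_G$ of degree $\leq q-r$ with overall sign $(-1)^{q-r}$. Demailly's standard manipulation as $u\to\infty$ then replaces the Mehler trace by the pointwise bound $(-1)^{q-r}\omega_G^{n-d}$ on $M_G(\leq q-r)$, yielding the stated inequality, with equality for $q=n$ since the initial bound is sharp there. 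The effectivity hypothesis enters only to ensure that $G$ acts on $P$ with finite, generically trivial, stabilizers, so that the orbifold volume on $M_G$ coincides with the $G$-averaged volume on $P$ and the orbifold integration descends cleanly.
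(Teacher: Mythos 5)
Your proposal matches the paper's proof strategy essentially step for step: the starting point is Theorem \ref{Euler-et-chaleur}, followed by the spectral-gap-plus-finite-propagation localization away from $\mu^{-1}(0)$ (Theorem \ref{thm-estimate-away}), then the rescaled heat kernel asymptotics near $P$ with the uniform Gaussian decay in the normal fibre proved via weighted Sobolev estimates à la Bismut--Lebeau/Ma--Zhang (Theorems \ref{estimationderiveesnoyaudeLpu} and \ref{cvcenoyauLup->noyauLuinfini}), and finally the Mehler computation and $u\to\infty$ limit via dominated convergence. Two small imprecisions worth flagging: the paper's model operator $\LL_0$ lives on the quotient $T_{x_0}B = T_{x_0}M_G\oplus N_{G,x_0}$ after conjugating by $\Phi=\wh{h}\pi_G$, not on $N\oplus TY$ inside $M$ as your description suggests — the orbit directions are already collapsed and the moment-map-squared term becomes a confining potential in the $N_G$ direction only; and effectivity of the $G$-action on $M$ does \emph{not} make the stabilizers on $P$ generically trivial (it only forces $G^0=\{e\}$), so the orbifold case still requires the genuine work of Section \ref{Sect-action-loc-libre}, where one sums over $g\in G_{y_0}$ and shows that all terms with $g\notin G^0$ contribute subleading powers of $p$ due to the Gaussian factor $\exp(-C'p|Z_{2,g}|^2)$.
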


\begin{rem}
We assume temporarily that $G$ acts freely on $P$, so that $M_G$ is a manifold.

If $L$ is positive, then $\omega$ is a \kahler form and $\mu$ is a genuine moment map. Moreover, $(M_G,\omega_G)$ is the usual \kahler reduction of $M$ (see \cite{guillemin-sternberg}). Zhang \cite[Theorem 1.1 and Proposition 1.2]{doi:10.1142/S0219199799000122} proved that  in this case quantization and reduction commute: for $p$ large enough,
\begin{equation}
\label{[Q,R]=0}
H^\bullet (M,L^p\otimes E)^G  \simeq H^\bullet( M_G,L_G^p\otimes E_G).
\end{equation}
We refer to Vergne's Bourbaki seminar \cite{vergne} for a survey on the Guillemin-Sternberg geometric quantization conjecture.

In particular, as in the non equivariant setting, Theorem \ref{IMI} is, in this case, a consequence of \eqref{[Q,R]=0} and of the Hirzebruch-Riemann-Roch theorem and of the Kodaira vanishing theorem, both applied  on~$M_G$.

We prove here that even if  $\omega$ is degenerate or if $G$ does not act freely on $P$, under Assumption~\ref{assum-regular-value}, we have the same estimate for $\sum_{j=0}^q (-1)^{q-j}\dim H^j(M,L^p\otimes E)^G$ as the one given by the holomorphic Morse inequalities on $M_G$ for $\sum_{j=0}^{q-r} (-1)^{q-j}\dim H^j(M_G,L_G^p\otimes E_G)$.
\end{rem}

Theorem \ref{IMI} is in fact a particular case of the more general Theorem \ref{IMI-general} below.

Set 
\begin{equation}
G^0=\{g\in G \: : \:  g\cdot x=x \text{ for any }x\in M\},
\end{equation}
which is a finite normal subgroup of $G$. Note that we will see in \eqref{G0(M)=G0(P)} that we also have $G^0=\{g\in G \: : \:  g\cdot x=x \text{ for any }x\in P\}$.

Observe that $\dim(L^p_v\otimes E_v)^{G^0}$ does not depend on $v\in M$. We will thus denote it simply by $\dim(L^p\otimes E)^{G^0}$.

\begin{thm}
 \label{IMI-general}
As $p\to +\infty$, the following strong Morse inequalities hold for $q\in \{ 1,\dots, n\}$
 \begin{multline}
\sum_{j=0}^q (-1)^{q-j}\dim H^j(M,L^p\otimes E)^G  \\
\leq \dim(L^p\otimes E)^{G^0} \frac{p^{n-d}}{(n-d)!}  \int_{M_G(\leq q-r)} (-1)^{q-r} \omega_G^{n-d} + o(p^{n-d}),
\end{multline}
with equality for $q=n$.

In particular, we get the weak Morse inequalities
 \begin{equation}
\dim H^q(M,L^p\otimes E)^G \leq \dim(L^p\otimes E)^{G^0} \frac{p^{n-d}}{(n-d)!}  \int_{M_G( q-r)} (-1)^{q-r} \omega_G^{n-d} + o(p^{n-d}).
\end{equation}
\end{thm}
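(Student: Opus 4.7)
The overall strategy would follow the heat kernel method sketched in the introduction, adapted to the $G$-invariant setting and mimicking the localization pattern of Ma--Zhang \cite{ma-zhang}. The starting point is the $G$-invariant analogue of \eqref{Morse-sum-and-supertrace} (Theorem \ref{Euler-et-chaleur}): letting $\square_p$ denote the Kodaira Laplacian acting on $\Omega^{0,\bullet}(M,L^p\otimes E)$, the Hodge theorem applied to the $G$-invariant subcomplex and commutation of $G$ with $\square_p$ give, for $u>0$,
\begin{equation*}
\sum_{j=0}^{q}(-1)^{q-j}\dim H^{j}(M,L^p\otimes E)^{G}\leq \sum_{j=0}^{q}(-1)^{q-j}\tr^{\Omega^{0,j}(M,L^p\otimes E)^{G}}\!\left[\exp\!\big(-\tfrac{u}{p}\square_p\big)\right],
\end{equation*}
with equality for $q=n$. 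The plan is then to prove that for fixed $u>0$ large enough, the right-hand side admits an asymptotic expansion as $p\to+\infty$ whose leading term equals the proposed integral over $M_G(\leq q-r)$ times $\dim(L^p\otimes E)^{G^0}\,p^{n-d}/(n-d)!$.

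The first technical step is \emph{localization near $P=\mu^{-1}(0)$}. Using the Lichnerowicz-type formula for $\square_p$, the Kostant definition \eqref{def-mu} produces, after rescaling by $u/p$, a potential term proportional to $p|\mu|^{2}$ on the $G$-invariant subspace (the analogue of Witten's $t^{2}|df|^{2}$, with the key sign given by the Kostant formula applied to invariant sections). A finite-propagation speed argument together with this positive potential away from $P$ gives that, modulo $O(p^{-\infty})$, the heat supertrace can be computed inside an arbitrarily small $G$-tubular neighborhood $U$ of $P$. On $U$ we then descend: invariant sections on $U$ are sections of the induced orbifold bundles on $U/G$, and $\square_p$ induces an operator $\square_p^{G}$ on $U/G$ whose asymptotic behavior is what we really need to analyse.

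The heart of the proof is the asymptotic analysis of $\exp(-\tfrac{u}{p}\square_p^{G})$ near $P/G\subset M_G$. One introduces normal coordinates to $P$ (splitting $TM|_{P}$ into $TY$, its $b^{L}$-orthogonal in the vertical directions, and the normal bundle to $P$) and rescales the normal directions by $1/\sqrt{p}$. Following \cite[Sect.~1.6-1.7]{ma-marinescu} and the refinements of \cite{ma-zhang}, the rescaled operator converges to a model harmonic-oscillator operator on the normal bundle over $M_G$. One must, however, strengthen the usual convergence to obtain a \emph{uniform Gaussian decay} of the heat kernel in the normal directions to $P$: this is the analogue of \cite[Thm.~0.2]{ma-zhang}, and it is required because, unlike in the positive case, we do not integrate against a fast-decaying Bergman kernel but must actually integrate the heat supertrace over $U/G$. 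The integration in the transverse directions is Gaussian and produces a factor $(2\pi/p)^{(\dim P - d)/2-(\dim N)/2}\cdot(\det)^{-1/2}$ that combines with the rescaling to give exactly $p^{n-d}/(n-d)!$ and the volume form $\omega_G^{n-d}$ on $M_G$; the signs coming from the signature $(r,d-r)$ of $b^{L}|_{TY\times TY}$ produce the shift $q\mapsto q-r$ in the Morse index, while the supertrace in the $TY$-directions contributes the sign $(-1)^{r}$ absorbed in that shift. The fibre trace at a point $x\in M_G$ factors through the $G^{0}$-action, giving the coefficient $\dim(L^{p}\otimes E)^{G^{0}}$ rather than $\rank(E)$, which is how Theorem \ref{IMI-general} generalises Theorem \ref{IMI}.

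The main obstacle, as in \cite{ma-zhang}, is establishing the uniform off-diagonal and normal-direction decay of the heat kernel of $\square_p^{G}$ with quantitative control as $p\to+\infty$: the pointwise convergence to the model operator, available from \cite[Sect.~1.6]{ma-marinescu}, is not sufficient to justify integrating in the normal directions and trading the $p^{n}$ scaling of Demailly's integral for the $p^{n-d}$ scaling over $M_G$. Once this uniform decay is in hand, the $u\to+\infty$ limit removes the contribution of the positive part of the spectrum of the model operator, exactly as in Bismut \cite{MR886814} and Demailly \cite{MR1128538}, and the strong Morse inequalities with equality at $q=n$ follow; the weak inequalities are then immediate consequences of the strong ones by the standard telescoping argument.
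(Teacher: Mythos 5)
Your proposal follows essentially the same heat-kernel strategy as the paper: Theorem~\ref{Euler-et-chaleur} as the starting inequality, localization near $P=\mu^{-1}(0)$ via the $p^2|\mu|^2$ potential term in the Lichnerowicz formula and finite propagation speed, rescaling normal to $P$ to a harmonic-oscillator model with a uniform Gaussian decay refinement (analogous to \cite[Thm.~0.2]{ma-zhang}), Gaussian integration in the normal directions producing the $p^{n-d}$ scaling and the shift $q\mapsto q-r$ from the signature $(r,d-r)$ of $b^L|_{TY}$, and finally $u\to+\infty$ by dominated convergence. The only point that is somewhat compressed relative to the paper is the passage from the free to the locally free case: the paper handles this (Section~\ref{Sect-action-loc-libre}) by working off-diagonal in orbifold uniformizing charts and summing over $g\in G_{y_0}$, showing that only $g\in G^0$ survives at leading order --- this is the mechanism behind the coefficient $\dim(L^p\otimes E)^{G^0}$ that you state but do not derive --- but your overall architecture is correct.
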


\begin{rem} 
The integer $\dim(L^p\otimes E)^{G^0}$ depends on $p$. However, as $G^0$ is finite and acts by rotations on $L$, there exists $k\in \N$ (a divisor of the cardinal of $G^0$) such that $G^0$ acts trivially on $L^k$. In particular,  we have $\dim(L^{kp}\otimes E)^{G^0}=\dim E^{G^0}$.
\end{rem}

We now explain what are the main steps of our proof.

Let $g^{TM}$ be a $J$- and $G$-invariant metric on $TM$.  Let $dv_M$ be the corresponding Riemannian volume on $M$, and let $\n^{TM}$ be the Levi-Civita connection on $(TM,g^{TM})$. Let $\db^{L^p\otimes E}$ be the Dolbeault operator acting on $\Omega^{0,\bullet}(M,L^p\otimes E)$. Let $\db^{L^p\otimes E,*}$ be its dual with respect to the $L^2$ product induced by $g^{TM}$, $h^L$ and $h^E$ on $\Omega^{0,\bullet}(M,L^p\otimes E)$. We set
\begin{equation}
\label{def-D_p}
D_p = \sqrt{2} \left( \db^{L^p\otimes E} + \db^{L^p\otimes E,*} \right),
\end{equation}
and we denote by  $e^{-uD_p^2}$ the associated heat kernel.

We denote  $P_G$  the orthogonal projection from $\Omega^{0,\bullet}(M,L^p\otimes E)$ onto $\Omega^{0,\bullet}(M,L^p\otimes E)^G$. Let $\big(P_Ge^{-\frac{u}{p}D_p^2}P_G\big)(v,v')$ be the Schwartz kernel of $P_Ge^{-\frac{u}{p}D_p^2}P_G$ with respect to $dv_M(v')$.

Note that the operator $D_p^2$ acts on $\Omega^{0,\bullet}(M,L^p\otimes E)^G$ (i.e., commutes with $P_G$) and preserves the $\Z$-grading. we denote by $\tr_q[P_Ge^{-\frac{u}{p}D_p^2}P_G]$ the trace of $P_Ge^{-\frac{u}{p}D_p^2}P_G$ acting on $\Omega^{0,q}(M,L^p\otimes E)$. We then have an analogue of \eqref{Morse-sum-and-supertrace}:
\begin{thm}
\label{Euler-et-chaleur}
 For any $u>0$, $p\in \N^*$ and $0\leq q \leq n$, we have
 \begin{equation}
 \label{Euler-et-chaleur-eq}
\sum_{j=0}^q (-1)^{q-j} \dim H^j(M,L^p\otimes E)^G \leq \sum_{j=0}^q (-1)^{q-j} \tr_j\big[P_Ge^{-\frac{u}{p}D_p^2}P_G\big],
\end{equation}
with equality for $q=n$.
\end{thm}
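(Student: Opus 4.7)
The plan is to follow the standard spectral/heat-kernel pattern of Bismut's proof of the Morse inequalities (cf.\ \cite{MR886814} and \cite[Sect.\ 1.6]{ma-marinescu}), but carried out on the $G$-invariant subcomplex rather than the full Dolbeault complex. Write $F_j := \Omega^{0,j}(M,L^p\otimes E)^G$. As already noted just before the statement, $D_p^2$ commutes with $P_G$; it therefore restricts to a self-adjoint elliptic operator on $F_\bullet$ that preserves the $\Z$-grading, and this restriction has discrete spectrum with finite-dimensional eigenspaces $F_j^\lambda\subset F_j$.

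Because $[\db,D_p^2]=0$, $\db$ sends $F_j^\lambda$ into $F_{j+1}^\lambda$, yielding a bounded subcomplex $(F_\bullet^\lambda,\db)$ for every $\lambda\geq 0$. On $F_\bullet^\lambda$ with $\lambda>0$, the identity $2(\db\db^*+\db^*\db)=\lambda$ makes $\frac{2}{\lambda}\db^*$ a chain homotopy between $\Id$ and $0$, so $(F_\bullet^\lambda,\db)$ is acyclic. On $F_\bullet^0$ both $\db$ and $\db^*$ vanish, and the usual Hodge decomposition for the elliptic operator $D_p^2$ on $\Omega^{0,\bullet}(M,L^p\otimes E)$, followed by taking $G$-invariants, identifies $F_j^0$ with $H^j(M,L^p\otimes E)^G$.

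The last ingredient is the elementary Euler-characteristic inequality: for any bounded complex $(K_\bullet,d)$ of finite-dimensional spaces with $K_j=0$ for $j>n$,
\begin{equation*}
\sum_{j=0}^q (-1)^{q-j}\dim H^j(K_\bullet) \leq \sum_{j=0}^q (-1)^{q-j}\dim K_j,
\end{equation*}
with equality at $q=n$ (the difference equals $\dim\Im(d_q)$, which vanishes at the top degree). Combining it with
\begin{equation*}
\tr_j\bigl[P_G e^{-\frac{u}{p}D_p^2}P_G\bigr] = \sum_{\lambda\geq 0} e^{-u\lambda/p}\dim F_j^\lambda
\end{equation*}
yields \eqref{Euler-et-chaleur-eq}: the $\lambda=0$ contribution produces exactly $\sum_{j=0}^q(-1)^{q-j}\dim H^j(M,L^p\otimes E)^G$, the $\lambda>0$ terms are nonnegative, and they vanish term-by-term when $q=n$. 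The only point requiring more than pure formalism is to verify that discrete spectrum, Hodge decomposition, and acyclicity of $(F_\bullet^\lambda,\db)$ all pass to the invariant subcomplex; this is immediate from the ellipticity of $D_p^2$ together with the continuity and $G$-equivariance of $P_G$.
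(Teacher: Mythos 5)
Your proof is correct and follows essentially the same route as the paper: decompose the invariant subcomplex by eigenvalues of $D_p^2$, show each $(F_\bullet^\lambda,\db)$ with $\lambda>0$ is acyclic (the paper does this by directly exhibiting $\lambda^{-1}\db\db^*s$ as a preimage, which is your chain homotopy unwound), identify $F_\bullet^0$ with the invariant cohomology via Hodge theory, and conclude by counting. The only cosmetic difference is that you invoke the general Euler-characteristic inequality for bounded finite-dimensional complexes, whereas the paper writes out the equality $\sum_{j\le q}(-1)^{q-j}\dim F_j^\lambda=\dim\db(F_q^\lambda)$ directly; the content is identical.
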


We now give the estimates on $P_Ge^{-\frac{u}{p}D_p^2}P_G$ to treat the right-hand side of \eqref{Euler-et-chaleur-eq}.

Let $U$ be a small open $G$-invariant neighborhood of $P$, such that $G$ acts locally freely on its closure $\ol{U}$. 

First, we have away from $P$ the following theorem. The analogous result of Ma-Zhang for the invariant Bergman kernel is \cite[Thm. 0.1]{ma-zhang}.
\begin{thm}
\label{thm-estimate-away}
 For any fixed $u>0$  and $k,\ell \in \N$, there exists $C>0$ such that for any $p\in \N^*$ and $v,v'\in M$ with $v,v'\in M\setminus U$,
 \begin{equation}
\left|P_Ge^{-\frac{u}{p}D_p^2}P_G(v,v') \right|_{\mathscr{C}^\ell}\leq Cp^{-k},
\end{equation}
where $|\cdot |_{\mathscr{C}^\ell}$ is the $\mathscr{C}^\ell$-norm induced by $\n^L$, $\n^E$, $\n^{TM}$, $h^L$, $h^E$ and $g^{TM}$.
\end{thm}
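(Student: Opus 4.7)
The plan is to follow closely the approach Ma--Zhang used for the invariant Bergman kernel \cite{ma-zhang}, with three main ingredients: finite propagation speed, $G$-averaging, and integration by parts using the Kostant formula \eqref{def-mu}. First I would rewrite $e^{-(u/p)D_p^2}$ via Fourier inversion. Setting $\phi_p(x) = e^{-u x^2/p}$, its Fourier transform $\widehat{\phi_p}$ is (up to normalization) a Gaussian of standard deviation $\sim 1/\sqrt p$. Choosing an even cut-off $\rho$ supported in $[-\e,\e]$ and equal to $1$ near $0$, the splitting $\phi_p = F_1 + F_2$ with $\widehat{F_1} = \rho \widehat{\phi_p}$ gives a remainder $F_2(D_p)$ whose kernel is $O(p^{-\infty})$ in every $\smooth^\ell$-norm (by elliptic regularity from the rapid Sobolev decay of $F_2$), and by finite propagation speed for the wave equation generated by $D_p$ the kernel of $F_1(D_p)$ is supported in $\{(v,v') : d(v,v') \leq \e\}$.

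Writing the projection as an average, $P_G = \int_G \rho(g)\, dg$, where $\rho$ denotes the unitary $G$-action on $\Omega^{0,\bullet}(M, L^p \otimes E)$, one obtains
\begin{equation*}
(P_G e^{-(u/p)D_p^2} P_G)(v,v') = \int_G F_1(D_p)(g^{-1}v, v')\, \rho(g)\, dg + O(p^{-\infty}).
\end{equation*}
If $\e$ is chosen so that $U$ contains an $\e$-neighborhood of $P$, the support condition from the previous step restricts the effective range of $g$ to $\{g : d(g^{-1}v, v') \leq \e\}$, and on this range both $v'$ and $g^{-1}v$ remain in $M \setminus P$. By $G$-equivariance of $\mu$ and continuity, $|\mu|$ is bounded below by a constant $c > 0$ on the relevant region.

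The decay in $p$ is then extracted by integration by parts in the $G$-variable. Since the image of $P_G$ consists of $G$-invariant sections, $\lie_K^{L^p \otimes E}$ vanishes on it for every $K \in \g$; equivalently, Kostant's formula \eqref{def-mu} shows that on invariants
\begin{equation*}
\nabla^{L^p}_{K^M} = 2i\pi p \, \mu(K) + \text{(zeroth order terms from } E \text{ and forms)}.
\end{equation*}
Writing the $g$-derivative of $\rho(g)$ in the direction $K \in \g$ as $\rho(g)\lie_K$ and integrating by parts against a well-chosen linear combination of such $K$'s (namely a dual vector to $\mu(v)$, or $\mu(g^{-1}v)$, inside $\g$) produces a factor of order $1/(p|\mu|) \lesssim 1/(cp)$ per integration. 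Iterating this procedure $k$ times and absorbing the commutators with the localization cut-offs and the derivatives falling on $F_1(D_p)$ via the standard a priori bounds on $F_1(D_p)$ and its derivatives on $D_p$ yields the $O(p^{-k})$ bound. The pointwise $\smooth^\ell$ estimate is obtained by choosing $k$ large enough and applying Sobolev embedding, exactly as in the Bergman kernel case.

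The main obstacle is the bookkeeping in the integration by parts: $\lie_K$ acts simultaneously on the section, on the exterior powers of $T^{*(0,1)}M$, and on the fiber of $E$ at the moving point $g^{-1}v$; one must carefully track how $\rho(g)$ intertwines $\nabla^{L^p}_{K^M}$ with $\lie_K$, and ensure that the cut-offs introduced in the finite propagation step commute well enough with $\lie_K$ not to destroy the $1/p$ gain. Additionally, the gain per integration by parts must be made uniform in $(v,v') \in (M \setminus U)^2$, which is where the compactness of $G$ and the global lower bound $|\mu| \geq c$ on $M \setminus U$ become crucial.
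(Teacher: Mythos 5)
Your proposal and the paper's proof share the finite-propagation-speed step, but the mechanism for extracting the $O(p^{-\infty})$ decay is genuinely different, and the route you choose has a gap at its central step.

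The paper's proof is a \emph{spectral gap} argument. It sets $V_{\epsilon_0}=M\setminus\mu^{-1}\big(B^{\g^*}(0,\epsilon_0)\big)\supset M\setminus U$ and works with the Dirichlet realization $D_{p,D}$ of $D_p$ on $V_{\epsilon_0}$. For a $G$-invariant $s$ supported in the interior of $V_{\epsilon_0}$ one has $\lie_K s=0$, so Kostant's formula gives $\n^{\E_p}_{K^M}s=\mu^{\E_p}(K)s=\big(2i\pi p\,\mu(K)+O(1)\big)s$; since $|\mu|\geq\epsilon_0$ on $\supp s$, this yields $\|\n^{\E_p}s\|^2\geq C\epsilon_0^2 p^2\|s\|^2-C'\|s\|^2$, and plugged into the Lichnerowicz formula \eqref{Lich-D_p^2} (whose remaining terms are only $O(p)$) gives the \emph{quadratic} gap $\Sp(P_GD_{p,D}^2P_G)\subset[Cp^2,+\infty[$. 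Finite propagation speed identifies the kernels of $\F_{u/p}(\sqrt{u/p}\,D_p)$ and $\F_{u/p}(\sqrt{u/p}\,D_{p,D})$ on $(M\setminus U)^2$, and the functional-calculus bound $\sup_{a\geq Cp}|a^m\F_{u/p}(a\sqrt{u/p})|\leq C_{m,k,u}p^{-k}$ together with elliptic a priori estimates finishes. The quadratic order of the gap is essential here: since the heat time is $u/p$, only $\Sp\gtrsim p^2$ forces $e^{-(u/p)\Sp}=O(p^{-\infty})$; a gap of order $p$ would give a bound independent of $p$ but not decaying in $p$.

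The integration-by-parts step in your argument does not, as stated, produce a gain. Write $I(v):=(P_GF_1(D_p))(v,v')=\int_G g\cdot F_1(D_p)(g^{-1}v,v')\,dg$. Differentiating under the Haar integral along a right-invariant vector field $K^R$ and applying Leibniz recovers exactly the Lie derivative of the already-averaged kernel, $\int_G(K^Rh)(g)\,dg=-\lie_K^{\E_p}I(v)=0$, which is nothing but the $G$-equivariance of $P_GF_1(D_p)$ in its first slot. Substituting $\lie_K=\n^{\E_p}_{K^M}-\mu^{\E_p}(K)$ turns this into the identity $\n^{\E_p,(v)}_{K^M}I=\mu^{\E_p}(K)(v)\,I$, a reformulation of invariance rather than a decay estimate: dividing by $p\mu(K)$ only helps if you independently know $\n^{\E_p}_{K^M}I$ is small, which is exactly as hard as the claim itself. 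To realize the non-stationary-phase intuition you would need to pass to an explicit unitary trivialization of $L^p$ along the $G$-orbits in which the kernel carries a genuine oscillatory factor $e^{ip\Psi}$ with $\partial_{K^M}\Psi\propto\mu(K)$, and then control the amplitude and its derivatives uniformly in $p$; that phase/amplitude separation (which also shows that the gain per integration by parts is only $p^{-1/2}$, since derivatives of the amplitude cost $\sqrt p$) is the missing content. The spectral gap route above is both simpler and yields the sharper exponential bound $O(e^{-cup})$.
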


We now turn to the ``near $P$'' asymptotic of the heat kernel. To explain simply this asymptotic, we assume now that $G$ acts freely on $P$. We can thus also assume that $G$ acts freely on $\ol{U}$. Let $B=U/G$. Then $M_G$ and $B$ are here genuine manifolds. We will explain in Section \ref{Sect-action-loc-libre} how to adapt the proof of Theorems \ref{IMI}  and \ref{IMI-general} to the case of a locally free action. 

We again denote by $TY=\mathrm{Span}(K^M,K\in \g)$ the tangent bundle of the orbits in $U$. By Lemma \ref{bL-ND}, we have
\begin{equation}
\label{decompo-TU}
TU = TY \oplus (TY)^{\perp_{b^L}}.
\end{equation}
 Then we can choose the horizontal bundles of the fibrations $U\to B$ and $P\to M_G$ to be respectively
\begin{equation}
T^H U = (TY)^{\perp_{b^L}} \quad \text{and } \quad T^H P = \left.T^HU\right|_{P}\cap TP.
\end{equation}
Indeed, using \eqref{decompo-TU} and the fact that $TY\subset TP$, we see that 
\begin{equation}
\label{TP=TY+THP}
TP=TY\oplus T^H P.
\end{equation} 

Let $g^{T^HP}$ be a $G$-invariant and $J$-invariant metric on $T^H P$. Let $g^{TY}$ be a $G$-invariant metric on $TY$ and let $g^{JTY}$ be the $G$-invariant metric on $JTY$ induced by $J$ and $g^{TY}$. Then by \eqref{decompo-TU|P}, we can  chose the metric $g^{TM}$ on $TM$ so that on $P$:
\begin{equation}
\label{g^TM|_P}
g^{TM}|_P= g^{TY}|_P \oplus g^{JTY}|_P \oplus g^{T^HP}.
\end{equation}
We will use this condition on $g^{TM}$  in the rest of the introduction as well as in Sections \ref{Sect-rescaling}-\ref{Sect-action-loc-libre}.

Suppose that $U$ is small enough so that it can be identified with a $\e$-neighborhood, $\e>0$, of the zero section of the normal bundle $N $ of $P$ in $U$ via exponential map. We denote the corresponding  coordinate by $v  = (y,Z^\perp)\in U$ with $y\in P$ and $Z^\perp\in N_y$. Note that by \eqref{TP=TY+THP} and \eqref{g^TM|_P} we can identify $N_y$ and $JTY_y$.

Let $\boldrm{J}\in \End(TM|_P)$ be such that on $P$
\begin{equation}
\label{def-J}
\omega = g^{TM}(\boldrm{J}\cdot \,, \cdot).
\end{equation}

By \eqref{id-NG}, the normal bundle $N_G$ of $M_G$ in $B$ can be identified with the bundle $(JTY)_{B}$ induced on $B$ by $JTY\simeq N$ (see Section \ref{Sect-connections}). In particular, if $\pi(y)=x$, we keep the same notation for an element of $N_{y}$ and the corresponding element in $N_{G,x}$. 

We will see in Section \ref{Sect-calcul} that $\boldrm{J}$ intertwines  the bundles $TY$ and $JTY$. In particular, $\boldrm{J^2}$ induces an endomorphism of $N_G$ and if $\{a_1^\perp,\dots,a_d^\perp\}=-2\ic \pi\Sp(\boldrm{J}|_{(TY\oplus JTY)^{(1,0)}})$ ($a_j^\perp\in\R^*$), then
\begin{equation}
\Sp(\boldrm{J^2}|_{N_G}) = -\frac{1}{4\pi^2} \{a_1^{\perp,2},\dots,a_d^{\perp,2}\}.
\end{equation}

Let $g^{TB}$ be the metric on $TB$ induced by $g^{TM}$ and $T^H P$. Let $g^{N_G}$ be the induced metric on $N_G$ and $dv_{N_G}$ the corresponding volume form. For $x\in M_G$,  let $\{e_i^\perp\}_{i=1}^d$ be an orthonormal basis of $N_{G,x}$ such that $\boldrm{J}^2_x e_i^\perp= -\frac{1}{4\pi^2}a_i^{\perp,2}(x)e_i^\perp$. We can then identify $\R^d$ with $N_{G,x}$ via the map
\begin{equation}
(Z_1^\perp,\dots,Z_d^\perp) \in \R^d \mapsto Z^\perp=\sum_{i=1}^d Z_i^\perp e_i^\perp.
\end{equation}
We now define the operator $\LL^\perp_x$ acting on $N_{G,x} \simeq \R^d$ by
\begin{equation}
\label{def-Lperp}
\LL_x^\perp = -\sum_{i=1}^{d}\left((\n_{e_i^\perp})^2-|a_i^\perp Z_i^\perp|^2\right) - \sum_{j=1}^{d}a_j^\perp
\end{equation}
where  $\n_U$ denotes the ordinary differentiation operator on $\R^d$ in the direction $U$. We denote by $e^{-u\LL_x^\perp}(Z^\perp,Z'^\perp)$ the heat kernel of $\LL_x^\perp$ with respect to $dv_{N_{G,x}}(Z'^\perp)$. Note that we have an explicit formula for $e^{-u\LL_x^\perp}(Z^\perp,Z'^\perp)$ (see \eqref{noyau-Lperp(Zperp,Zperp)}), but we do not give it to have a simpler asymptotic formula for the heat kernel.

Let $g^{TM_G}$ be the metric on $M_G$ induced by $g^{TM}$ and $T^H P$ and $dv_{M_G}$ the corresponding volume form. We denote by $\langle \cdot\,,\cdot \rangle_G$ the $\C$-bilinear extension of $g^{TM_G}$ on $TM_G \otimes \C$. Then we can identify $R^{L_G}$ with the Hermitian matrix $\dot{R}^{L_G} \in \End (T^{(1,0)}M_G) $ such that for $V,V' \in T^{(1,0)}M_G$,
\begin{equation}
R^{L_G}(V,V') = \langle \dot{R}^{L_G} V,\ol{V'}\rangle_G.
\end{equation}

Let  $\{w_j\}$ be a local orthonormal frame of $T^{(1,0)}M$ with dual frame $\{w^j\}$. Set
\begin{equation}
\omega_{d} = -\sum_{i,j} R^{L}(w_i,\bw_j) \bw^j\wedge i_{\bw_i}.
\end{equation}
Let $h$ be the $G$-invariant smooth function on $M$ given by (see Section \ref{Sect-connections})
\begin{equation}
h(x) = \sqrt{\mathrm{vol}(G.x)},
\end{equation}
and let $\kappa \in \smooth(TB|_{M_G})$ be the function defined by $\kappa|_{M_G}=1$ and for $x\in M_G$, $Z\in T_{x}B$,
\begin{equation}
dv_{B}(x,Z) = \kappa(x,Z)dv_{T_xB}(Z) = \kappa(x,Z)dv_{M_G}(x)dv_{N_{G,x}}(Z).
\end{equation}

The following result is a version of \cite[Thm. 2.21]{ma-zhang}  in our situation for the heat kernel.
\begin{thm}
\label{thm-limit-near}
Assume that $G$ acts freely on $P$. For any fixed $u>0$ and $m \in \N$, we have the following convergence as $p\to +\infty$ for $|Z^\perp|<\e$:
 \begin{multline}
h(y,Z^\perp)^2\big(P_Ge^{-\frac{u}{p}D_p^2}P_G\big)\big((y,Z^\perp),(y,Z^\perp)\big) = \\
\frac{\kappa^{-1}(x,Z^\perp)}{(2\pi)^{n-d}} \frac{\det(\dot{R}_x^{L_G})e^{2u\omega_{d}(x)}}{\det\big(1-\exp(-2u\dot{R}_x^{L_G})\big)} e^{-u\LL^\perp_x}(\sqrt{p}Z^\perp,\sqrt{p}Z^\perp) \otimes \Id_E p^{n-d/2} \\
+ O\big(p^{n-d/2-1/2}(1+\sqrt{p}|Z^\perp|)^{-m}\big),
\end{multline}
 where $x=\pi(y)\in M_G$ and the term $O(\cdot)$ is uniform. The convergence is in the $\mathscr{C}^\infty$-topology in $y\in P$. Here, we use the convention that if an eigenvalue of $\dot{R}^{L_G}_{x_0}$ is zero, then its contribution to $\frac{\det(\dot{R}^{L_G}_{x_0})}{\det \big(1-\exp(-u\dot{R}^{L_G}_{x_0})\big)}$ is $\frac{1}{2u}$.
\end{thm}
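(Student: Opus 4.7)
The overall strategy follows the heat-kernel / analytic-localization approach of Bismut-Lebeau, in the refined form developed by Ma-Marinescu \cite[Sect.~1.6]{ma-marinescu} for the holomorphic Morse inequalities and by Ma-Zhang \cite{ma-zhang} for the invariant Bergman kernel. Since $G$ acts freely on $\ol{U}$, every $G$-invariant section of $L^p\otimes E$ over $U$ descends to a section of the corresponding quotient bundle on $B=U/G$, and $\frac{1}{p}D_p^2$ restricted to invariants corresponds, after conjugation by $h$, to an operator $\Phi_p$ acting on $B$. The factor $h(y,Z^\perp)^2$ in the statement arises from the volume-form relation between $dv_M$ and $dv_{G\cdot y}\,dv_B$: under this identification, $h^2\big(P_G e^{-\frac{u}{p}D_p^2}P_G\big)((y,Z^\perp),(y,Z^\perp))$ is exactly the diagonal of the Schwartz kernel of $e^{-u\Phi_p}$ on $B$ with respect to $dv_B$.

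By finite propagation speed (the heat-kernel counterpart of Theorem~\ref{thm-estimate-away}), one reduces to studying $e^{-u\Phi_p}$ in a tubular neighborhood of $M_G\subset B$, parametrized by normal coordinates $(x,Z^\perp)\in N_G$ via the exponential map. One then introduces along the normal fibers the Ma-Zhang rescaling $Z^\perp\mapsto Z^\perp/\sqrt{p}$, together with the usual rescaling of the Clifford variables. Using the Taylor expansion of $g^{TM}$, $\n^L$, $R^L$ and of the horizontal distribution $T^HP$ at $M_G$, together with the description of $\boldrm{J}$ from the paper's preliminary sections, the rescaled operator is shown to converge to a model operator $\LL^0_x$ which is a commuting sum: in the normal variable $Z^\perp\in N_{G,x}$ it coincides with the harmonic oscillator $\LL^\perp_x$ defined in \eqref{def-Lperp}, and in the tangential variable along $M_G$ it is a Kodaira-Laplace-type operator with effective curvature $\dot{R}^{L_G}$ and Clifford contribution $\omega_{d}(x)$.

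Evaluated on the diagonal, the heat kernel of $\LL^0_x$ factors as a product of the two pieces. The classical Mehler formula for the tangential part yields $\frac{1}{(2\pi)^{n-d}}\frac{\det(\dot{R}^{L_G}_x)\,e^{2u\omega_{d}(x)}}{\det(1-\exp(-2u\dot{R}^{L_G}_x))}$, while the normal part contributes $e^{-u\LL^\perp_x}(\sqrt{p}Z^\perp,\sqrt{p}Z^\perp)$, the $\sqrt{p}$-argument coming from undoing the rescaling. The factor $\kappa^{-1}(x,Z^\perp)$ arises when writing $dv_B = \kappa\,dv_{M_G}\,dv_{N_G}$ to relate the natural $dv_{N_G}$-density of the rescaled kernel to the $dv_B$-density appearing in the statement, and the overall $p^{n-d/2}$ is the product of $p^{n-d}$ from the un-rescaled $(n-d)$-dimensional base and $p^{d/2}$ from the rescaling in the $d$-dimensional normal fibers.

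The main obstacle is the \emph{uniform} polynomial decay $(1+\sqrt{p}|Z^\perp|)^{-m}$ in the remainder, which is essential since the heat kernel will ultimately be integrated along the whole normal direction and mere pointwise convergence would be too weak. Following the method of \cite[Thm.~2.21]{ma-zhang}, the plan is to work in weighted Sobolev spaces on $N_{G,x}$ with weights $(1+|Z^\perp|)^N$ and to prove uniform elliptic and commutator estimates for the rescaled operators and their resolvents in these norms. The harmonic oscillator $\LL^\perp_x$ has explicit Gaussian off-diagonal decay, and transporting this decay through Duhamel's formula from $\LL^0_x$ back to the rescaled $\Phi_p$ yields the required polynomial bound in $\sqrt{p}|Z^\perp|$. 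The extra $p^{-1/2}$ in the error reflects the fact that the first non-trivial correction in the Taylor expansion of the rescaled operator around $\LL^0_x$ is of order $p^{-1/2}$, while $\mathscr{C}^\infty$-convergence in $y\in P$ follows from applying the same weighted estimates to arbitrary derivatives in the $y$-variable.
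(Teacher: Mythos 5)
Your proposal follows essentially the same analytic-localization strategy as the paper: pass to the quotient $B=U/G$ via the isometry $\Phi=\wh{h}\pi_G$, localize near $M_G$ by finite propagation speed, rescale by $\sqrt p$, prove convergence of the rescaled operator to the model $\LL_0=\LL+\LL^\perp-2\omega_d$ in weighted Sobolev norms, and conclude via Mehler's formula and Gaussian decay of $e^{-u\LL^\perp}$. One expositional difference to be aware of: the paper's rescaling $S_t s(Z)=s(Z/t)$, $t=p^{-1/2}$, is applied to \emph{all} $2n-d$ coordinates of $B$ at once (tangential $Z^0$ and normal $Z^\perp$), yielding the Jacobian factor $p^{(2n-d)/2}=p^{n-d/2}$ and a single model operator, whereas you describe rescaling only the normal fibers and accounting for $p^{n-d}$ separately from the tangential heat kernel --- the arithmetic agrees, but the paper's uniform rescaling is what makes Proposition~\ref{asymp-Lt} and the resolvent estimates of Section~\ref{Sect-convergence} go through in one stroke; note also that the polynomial decay in $|Z^\perp|$ is obtained in the paper not by introducing explicit polynomial weights but by including $t^{-1}\langle\wt{\mu}_0,f_{0,l}\rangle(tZ)$ in the family $\D_t$ defining the norms, together with the lower bound $|t^{-1}\wt{\mu}_0|^2\geq C|Z^\perp|^2$.
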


From Theorems \ref{Euler-et-chaleur}, \ref{thm-estimate-away} and \ref{thm-limit-near}, we get Theorem \ref{IMI} in the case where $G$ acts freely on $P$ by integrating on $M$ the trace of $\big(P_Ge^{-\frac{u}{p}D_p^2}P_G\big)(m,m)$, then taking the limit $u\to +\infty$.

This paper is organized as follows. In Section \ref{Sect-connections}, we recall some constructions associated with a principal bundle. In Section \ref{Sect-reduction-laplacian}, we apply the constructions and results of Section \ref{Sect-connections} to our situation to define the reduction of $M$ and to descend the different objects we are given on it, thus proving Theorem~\ref{descent}. In Section \ref{Sect-localization} we prove the localization of the heat kernel near $P$, i.e., Theorem \ref{thm-estimate-away}. In Sections \ref{Sect-AD-heat-kernel}, we assume for simplicity that $G$ acts freely on $P$ and $\ol{U}$, and study the asymptotic of the heat kernel near $P$ by localizing the problem and studying a rescaled Laplacian on $B$. We thus obtain Theorem \ref{thm-limit-near}. Finally, in Section \ref{Sect-proof-IMI}, we prove the $G$-invariant holomorphic Morse inequalities (Theorems \ref{IMI} and \ref{IMI-general}) and  we also show how to use Theorem \ref{IMI-general} to get estimates on the other isotypic components of the cohomology $H^\bullet(M,L^p\otimes E)$.


\section{Connections and Laplacians associated with a principal bundle}
\label{Sect-connections}

In this section, we review some results of \cite[Chp. 1]{ma-zhang} for the convenience of the reader. 

Let $G$ be a connected compact Lie group of dimension $d$ that acts smoothly and locally freely on the left on a smooth manifold $M$ of dimension $m$. Then $\pi \colon M\to B=M/G$ is a $G$-principal bundle and $B$ is an orbifold. We denote by $TY$ the relative tangent bundle of this fibration.

Note that in \cite[Chp. 1]{ma-zhang}, Ma and Zhang assumed that $G$ acts freely on $M$, but as they explain in the introduction of \cite[Chp. 1]{ma-zhang} and in \cite[Sect. 4.1]{ma-zhang}, the results of \cite[Chp. 1]{ma-zhang} extend to the case where $G$ acts only locally freely, essentially because when we work on orbifold quotients, we in fact work with invariant sections on $M$.

Let $g^{TM}$ be a $G$-invariant metric on $TM$, and $\n^{TM}$ the corresponding Levi-Civita connection on $TM$. We denote by $T^HM$ the orthogonal complement of $TY$ in $TM$. For $U\in TB$, we denote by $U^H$ the horizontal lift of $U$ in $TM$, that is $\pi_* U^H = U$ and $U^H \in T^HM$. Let $\theta \colon TM\to \g$ be the connection form corresponding to $T^HM$, and let $\Theta$ be its curvature, i.e., the horizontal form such that
\begin{equation}
\label{def-Theta}
\Theta(U^H,V^H)=-P^{TY}[U^H,V^H],
\end{equation}
where $P^{TY}$ is the natural projection $TM=TY\oplus T^HM\to TY$.

The metric  $g^{TM}$ induces a metric $g^{TY}$ (resp. $g^{T^HM}$) on $TY$ (resp. $T^HM$). Let $g^{TB}$ be the metric on $TB$ induced by $g^{T^HM}$, and let $\n^{TB}$ be the corresponding Levi-Civita connection.

Let $(F,h^F)$ be a $G$-equivariant  Hermitian vector bundle with $G$-equivariant Hermitian connection $\n^F$. Then $G$ acts on $\smooth(M,F)$ by $(g.s)(x)=g.s(g^{-1}x)$.

Any $K\in \g$ induces a vector field $K^M$ on $M$ given by
\begin{equation}
\label{def-KM}
K^M_x=\left.\derpar{}{s}\right|_{s=0}e^{-sK}.x.
\end{equation}
For $K\in \g$, recall that $\lie_K$ is the infinitesimal action of $K$ on any $G$ representation. Let $\mu^F \in \smooth (M, \g^*\otimes \End(F))$ be defined by
\begin{equation}
\label{def-muF}
\mu^F(K) = \n^F_{K^M} - \lie_K.
\end{equation}
Using the identification $TY \simeq M\times \g$, we can identify $\mu^F$ with  $\wt{\mu}^F \in \smooth(M, TY\otimes \End(F))^G$ such that
\begin{equation}
\label{def-mutildeF}
\langle \wt{\mu}^F, K^M\rangle  = \mu^F(K).
\end{equation}

Let $F_B$ be the orbifold bundle on $B$ induced by $F$, i.e., $F_{B,x}=\smooth(\pi^{-1}(x),F|_{\pi^{-1}(x)})^G$. Then there is a canonical isomorphism
\begin{equation}
\pi_G \colon \smooth(M,F)^G \isom \smooth(B,F_B).
\end{equation}
The invariant metric $h^F$ induces a metric $h^{F_B}$ on $F_B$. For $s\in \smooth(B,F_B)$ and $U\in TB$, we define
\begin{equation}
\label{def-connection-down}
\n^{F_B}_Us := \n^F_{U^H} s.
\end{equation}
Observe that $\n^{F_B}$ is the restriction of the connection $\n^F - \mu(\theta)$ to $\smooth(M,F)^G$. Let $R^{F_B}$ be the curvature of $\n^{F_B}$. Then by  \cite[(1.18)]{ma-zhang} we have for $V,V'\in TB$
\begin{equation}
\label{formule-RFB}
R^{F_B} (V,V') = R^F(V^H,V'^H)-\mu^F(\Theta)(V,V').
\end{equation}

Let $dv_M$ be the Riemannian volume on $(M,g^{TM})$. We endow $\smooth(M,F)$ with the $L^2$ product induced by $g^{TM}$ and $h^F$:
\begin{equation}
\label{def-L^2-product}
\langle s,s' \rangle  = \int_M \langle s,s' \rangle_{h^F}(x)dv_M(x).
\end{equation}
In the same way, $g^{TB}$ and $h^{F_B}$ induce a $L^2$ product $\langle \cdot\,,\cdot \rangle$ on $\smooth(B,F_B)$.

For $x\in M$, we denote by $\mathrm{vol}(G.x)$ the  volume of the orbit of $G.x$ endowed with the restriction of $g^{TM}$. Define the $G$-invariant function $h$ on $M$ by
\begin{equation}
\label{def-h}
h(x) = \sqrt{\mathrm{vol}(G.x)}.
\end{equation}
Then $h$ define a function on $B$, which is still denoted by $h$. Note that $h$ is smooth only on the regular part of $B$.  However, we can extend it continuously to get a smooth function $\wh{h}$ on $B$. Then $\wh{h}$ also define a smooth function on $U$.

The map 
\begin{equation}
\label{def-Phi}
\Phi:= \wh{h}\pi_G \colon \big(\smooth(M,F)^G,\langle \cdot\,,\cdot \rangle\big) \to \big(\smooth(B,F_B),\langle \cdot\,,\cdot \rangle\big)
\end{equation}
is then an isometry.

Let $\{u_i\}_{i=1}^m$ be an orthonormal frame of $TM$. For any Hermitian bundle with Hermitian connection $(E,h^E,\n^E)$ on $M$, the Bochner Laplacians $\Delta^E, \Delta_M$ are given by
\begin{equation}
\Delta^E = -\sum_{i=1}^m \left( (\n^E_{u_i})^2 - \n^E_{\n^{TM}_{u_i}u_i} \right) \:, \quad \Delta_M = \Delta^\C.
\end{equation}

Let $\{f_l\}_{l=1}^d$be a $G$-invariant orthonormal frame of $TY$ with dual frame $\{f^l\}_{l=1}^d$, and let $\{e_i\}_{i=1}^{m-d}$ be an orthonormal frame of $TB$. Then $\{e_i^H,f_l\}$ form an orthonormal frame of $TM$.

For $\sigma,\sigma' \in TY\otimes \End(F)$, let $\langle \sigma,\sigma' \rangle_{g^{TY}} \in \End(F)$ be the contraction of the part of $\sigma\otimes \sigma'$ in $TY\otimes TY$ with $g^{TY}$. Note that
\begin{equation}
\label{carre-norme-mutilde-F}
\langle \wt{\mu}^F,\wt{\mu}^F \rangle_{g^{TY}} = \sum_{l=1}^d \langle \wt{\mu}^F,f_l \rangle_{g^{TY}}^2 \in \End(F).
\end{equation}

\begin{thm}
\label{induced-operator-on-B}
 As an operator on $\smooth(B,F_B)$, $\Phi \Delta^F\Phi^{-1}$ is given by
 \begin{equation}
\Phi \Delta^F\Phi^{-1} = \Delta^{F_B} - \langle \wt{\mu}^F,\wt{\mu}^F \rangle_{g^{TY}} -\wh{h}^{-1} \Delta_B \wh{h}.
\end{equation}
\end{thm}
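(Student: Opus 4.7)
The approach is to compute $\Phi \Delta^F \Phi^{-1}$ by expanding $\Delta^F$ in the $G$-invariant orthonormal frame $\{e_i^H, f_l\}_{i,l}$ adapted to the splitting $TM = T^HM \oplus TY$, analyzing its action on $G$-invariant sections, and then tracking the conjugation by the scalar function $\wh{h}$. I would split
\[
\Delta^F = -\sum_i \bigl((\n^F_{e_i^H})^2 - \n^F_{\n^{TM}_{e_i^H} e_i^H}\bigr) - \sum_l \bigl((\n^F_{f_l})^2 - \n^F_{\n^{TM}_{f_l} f_l}\bigr)
\]
and treat the two blocks separately.

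For the vertical block, the crucial point is that for any $G$-invariant section $s$ and any vertical $G$-invariant vector field $V$, the identity $\lie_K s = 0$ combined with \eqref{def-muF}--\eqref{def-mutildeF} yields $\n^F_V s = \langle \wt{\mu}^F, V \rangle s$. Iterating with $V = f_l$ and using $G$-invariance of both $\wt{\mu}^F$ and $f_l$, a direct computation shows the vertical block contributes $-\langle \wt{\mu}^F, \wt{\mu}^F \rangle_{g^{TY}}$ on invariant sections, modulo correction terms involving the horizontal components of $\n^{TM}_{f_l} f_l$. For the horizontal block, I would use the O'Neill-type formula relating $\n^{TM}|_{T^HM}$ to the horizontal lift of $\n^{TB}$, together with \eqref{def-connection-down}, which identifies $\n^F_{U^H}$ with $\n^{F_B}_U$ on invariant sections, to produce $\Delta^{F_B}$ plus first-order corrections coming from the difference between the horizontal projection of $\n^{TM}_{e_i^H} e_i^H$ and the horizontal lift of $\n^{TB}_{e_i} e_i$, plus vertical components that, after application of $\wt{\mu}^F$, contribute further zeroth-order terms.

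The final step is to identify the assembled corrections with $-\wh{h}^{-1} \Delta_B \wh{h}$. The mean curvature vector $\tau$ of the $G$-orbits is related to $\wh{h}$ through the classical first-variation formula for the fiber volume: for $U \in TB$, $\langle \tau, U^H \rangle = -U \cdot \log \wh{h}$, obtained by differentiating $\wh{h}^2(x) = \mathrm{vol}(G.x)$ in $x$. Combined with the conjugation by $\wh{h}$ implicit in $\Phi = \wh{h} \pi_G$, a direct rearrangement shows the first-order and zeroth-order pieces collapse to the operator $s \mapsto \wh{h}^{-1} \Delta_B(\wh{h} s)$ on $\smooth(B, F_B)$. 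The main obstacle will be the careful bookkeeping of the second-fundamental-form contributions from both the orbits and the non-integrable horizontal distribution, and recognizing $\wh{h}$ as the organizing quantity; once the identification $\tau = -d\log\wh{h}$ is in place, the clean form of the result is essentially forced.
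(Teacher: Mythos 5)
The paper does not give its own argument for this statement; it simply cites Ma--Zhang \cite[Thm.\ 1.3]{ma-zhang}. Your outline is the natural proof and is essentially the one in Ma--Zhang: split $\Delta^F$ along $TM = T^HM\oplus TY$, use that for an invariant section $s$ and vertical $V$ one has $\n^F_V s = \langle\wt{\mu}^F,V\rangle s$, compare the horizontal block to $\Delta^{F_B}$, and absorb the residual first- and zeroth-order pieces into the conjugation by $\wh{h}$ via the mean curvature of the orbits. A few bookkeeping corrections to your sketch, each of which matters for the cancellations to actually close: (i) the horizontal block equals $\Delta^{F_B}$ \emph{exactly} on invariant sections, with no first-order or vertical corrections --- by O'Neill, $\n^{TM}_{e_i^H}e_i^H = (\n^{TB}_{e_i}e_i)^H$ since the vertical part $A_{e_i^H}e_i^H$ is the antisymmetrization $\tfrac12\v[e_i^H,e_i^H]=0$; (ii) in the vertical block, the vertical part $\sum_l(\n^{TM}_{f_l}f_l)^{TY}$ also vanishes (it is the trace of $\ad$ in the left-invariant frame, zero because a compact group is unimodular), so it contributes no extra moment-map term; (iii) the correct first-variation identity is $\langle H, U^H\rangle = -2\,U\log\wh{h}$, where $H=\sum_l(\n^{TM}_{f_l}f_l)^{T^HM}$ is the (unnormalized) trace of the second fundamental form of the orbit --- the factor $2$ comes from $\wh{h}^2 = \mathrm{vol}(G.x)$ --- and your statement $\langle\tau,U^H\rangle=-U\log\wh{h}$ needs $\tau=\tfrac12 H$ to be correct; with the factor $2$ in place, $\n^F_H$ exactly cancels the first-order term $2\wh{h}^{-1}\n^F_{\n\wh{h}}$ coming from $\wh{h}\Delta^F\wh{h}^{-1}$; (iv) the remaining zeroth-order term is the \emph{scalar} $\wh{h}(\Delta_M\wh{h}^{-1}) = -\wh{h}^{-1}(\Delta_B\wh{h})$, not the operator $s\mapsto\wh{h}^{-1}\Delta_B(\wh{h}s)$ as your final sentence suggests.
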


\begin{proof}
 This is proved in  \cite[Thm. 1.3]{ma-zhang}.
\end{proof}


\section{The reduction of $M$ and the Laplacian on $B$}
\label{Sect-reduction-laplacian}

This Section is organized as follows. In Section \ref{Sect-reduction}  we apply the constructions and results of Section \ref{Sect-connections} to our situation in order to define the reduction of $M$ and to descend the different objects we are given on it. We prove, under Assumption \ref{assum-regular-value}, some properties of the reduction that are well-known in the case where $\omega$ is positive and get Theorem~\ref{descent}.  In Section \ref{Sect-operator}, we compute the operator induced on $U/G$ by the Kodaira Laplacian.

We use here the notations of the introduction. In particular, let $(M,J)$ be a connected compact complex manifold of dimension $n$, let $(L,h^L)$ be a holomorphic Hermitian line bundle on $M$ and $(E,h^E)$ a Hermitian complex vector bundle on $M$. We denote the associated Chern curvatures by $R^L$ and $R^E$. Let $\omega= \frac{\ic}{2\pi} R^L$ be the first Chern form of $(L,h^L)$, which is not assumed to be positive. Let  $G$ be a connected compact Lie group with Lie algebra $\g$. Let $d=\dim_\R G$. We assume that $G$ acts holomorphically on $(M,J)$, and that the action lifts in a holomorphic action on $L$ and $E$. We assume that $h^L$ and $h^E$ are preserved by the $G$-action.

Recall that $\n^L$ denotes the Chern connection of $(L,h^L)$ and that the moment map $\mu$ is defined by $2i\pi\mu(K)=\n^L_{K^M}-\lie_K$ for $K\in \g$. Let $P=\mu^{-1}(0)$ and $U$ a small tubular neighborhood of~$P$. Finally, we set $M_G=P/G$.

\subsection{The reduction of $M$}
\label{Sect-reduction}

 We begin by proving the following result.
\begin{lemme}
 \label{mu=moment-thm}
 The map $\mu$ is  smooth on $M$ and is linear in $K$. Moreover, it is moment map of the $G$-action on $M$, i.e., $\mu$ is $G$-equivariant and for any $K\in \g$,
\begin{equation}
\label{mu=moment}
d\mu (K) = i_{K^M} \omega.
\end{equation}
\end{lemme}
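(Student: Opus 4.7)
The plan is to verify the four assertions (smoothness, linearity, equivariance, moment identity) in turn, with the moment identity being the substantive one.

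First, for smoothness and pointwise nature: both $\n^L_{K^M}$ and $\lie_K$ are first-order differential operators on $\smooth(M,L)$ that satisfy the same Leibniz rule $D(fs)=(K^M\cdot f)s+fD s$ for $f\in\smooth(M)$, $s\in\smooth(M,L)$. Indeed, for $\lie_K$ this follows directly from the definition $(e^{tK}\cdot s)(x)=e^{tK}\cdot s(e^{-tK}x)$ by the chain rule, using $\der{}{t}\big|_{t=0}s(e^{-tK}x)=K^M_x\cdot s$. Hence $\n^L_{K^M}-\lie_K$ is $\smooth(M)$-linear, and thus a section of $\End(L)\simeq M\times\C$; I would further observe that both operators preserve $h^L$ (the Chern connection is unitary and $h^L$ is $G$-invariant), so their difference is skew-Hermitian, i.e., purely imaginary, whence dividing by $2i\pi$ yields a real smooth function.

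Linearity in $K$ is immediate: $K\mapsto K^M$ is $\R$-linear, so $K\mapsto\n^L_{K^M}$ is, and $K\mapsto\lie_K$ is linear as the infinitesimal action of a representation. For $G$-equivariance I would use the two standard identities $(\Ad(g)K)^M=g_*K^M$ and $g\,\lie_K g^{-1}=\lie_{\Ad(g)K}$ on $\smooth(M,L)$, together with the $G$-invariance of the Chern connection $g\,\n^L_V g^{-1}=\n^L_{g_*V}$, to conclude $g\cdot(\n^L_{K^M}-\lie_K)\cdot g^{-1}=\n^L_{(\Ad(g)K)^M}-\lie_{\Ad(g)K}$, which translates into $\mu(g\cdot x)(K)=\mu(x)(\Ad(g^{-1})K)$, i.e.\ the $G$-equivariance of $\mu$.

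The main step is \eqref{mu=moment}. Applying $\n^L_V$ to the identity $(\n^L_{K^M}-\lie_K)s=2i\pi\mu(K)s$ for a test section $s$ and a test vector field $V$, I would expand both sides. On the right: $\n^L_V(2i\pi\mu(K)s)=2i\pi V(\mu(K))s+2i\pi\mu(K)\n^L_Vs$. On the left I would commute past using two key identities:
\begin{equation}
[\n^L_V,\n^L_{K^M}]=R^L(V,K^M)+\n^L_{[V,K^M]},\qquad [\n^L_V,\lie_K]=\n^L_{[V,K^M]}.
\end{equation}
The first is the definition of curvature; the second is the infinitesimal form of the $G$-invariance of $\n^L$, obtained by differentiating $g\cdot\n^L_Vg^{-1}s=\n^L_{g_*V}s$ at $g=e^{tK}$ and using that the flow of $K^M$ corresponds to $e^{-tK}$ so that $\der{}{t}\big|_{t=0}(e^{tK})_*V=[V,K^M]$. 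Subtracting these two identities, the $\n^L_{[V,K^M]}$ terms cancel, leaving $[\n^L_V,\n^L_{K^M}-\lie_K]=R^L(V,K^M)$. Equating with the right-hand side computation yields $2i\pi V(\mu(K))=R^L(V,K^M)$, hence $d\mu(K)(V)=\omega(V,K^M)$, which is $i_{K^M}\omega(V)$ up to the sign convention fixed in the paper.

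The only mild obstacle is bookkeeping the sign conventions (the minus sign in $K^M_x=\der{}{s}\big|_{s=0}e^{-sK}\cdot x$ propagates into the sign of $\der{}{t}\big|_{t=0}(e^{tK})_*V$), but the algebraic cancellation between the curvature term and the invariance commutator term is robust and is the essential content of the Kostant formula.
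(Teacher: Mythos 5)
Your proposal follows essentially the same route as the paper: differentiate the $G$-invariance of $\n^L$ to obtain $[\n^L_V,\lie_K]=\n^L_{[V,K^M]}$, combine it with the curvature commutator $[\n^L_V,\n^L_{K^M}]=R^L(V,K^M)+\n^L_{[V,K^M]}$, and read off $d\mu(K)$ from the defining formula of $\mu$; the smoothness, linearity, and equivariance arguments likewise match the paper's. One small bookkeeping point at the very end (which you flagged yourself): from $2i\pi\, V(\mu(K))=R^L(V,K^M)$ and $\omega=\tfrac{\ic}{2\pi}R^L$, since $\tfrac{1}{2i\pi}=-\tfrac{\ic}{2\pi}$, you get $V(\mu(K))=-\omega(V,K^M)=\omega(K^M,V)=\bigl(i_{K^M}\omega\bigr)(V)$, so the identity holds on the nose with the paper's conventions, with no residual sign ambiguity.
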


\begin{proof}
First, as both $\n^L_{K^M}$ and $\lie_K$ satisfies the Leibniz rules and preserves $h^L$, we know that $\n^L_{K^M}-\lie_K$ is $\smooth(M)$-linear, and moreover it is a skew-adjoint operator. Thus, under the canonical isomorphism $\End(L)=\C$,
\begin{equation}
\n^L_{K^M}-\lie_K \in \smooth(M,i \R).
\end{equation}
This proves the first part of Lemma \ref{mu=moment-thm}.

As $\n^L$ is $G$-invariant, we have $g\cdot(\n^L_Ys)=\n^L_{g_*Y} (g\cdot s)$ for $Y\in \smooth(M,TM)$, $s\in \smooth(M,L)$ and $g\in G$. Thus, taking $g=e^{-tK}$ for $K\in \g$ and differentiating at $t=0$, we get
\begin{equation}
\label{nablaL-et-LK-com}
\lie_K \n^L_Y s = \n^L_{[K^M,Y]}s+\n^L_Y\lie_Ks , \qquad \text{that is} \quad [\lie_K,\n^L]=0. 
\end{equation}

Using the definition of $\mu$ \eqref{def-mu}, \eqref{nablaL-et-LK-com} becomes
\begin{equation}
\big( 2i\pi \mu(K)+\n^L_{K^M} \big)\n^L_Y s = \n^L_{[K^M,Y]}s+\n^L_Y\big( 2i\pi \mu(K)+\n^L_{K^M} \big)s.
\end{equation}
This, together with \eqref{prequantization}, yields to 
\begin{equation}
Y(\mu(K))= \omega(K^M,Y),
\end{equation}
which is \eqref{mu=moment}

Finally, it is easy to prove that $g_*K^M = (\Ad_g K)^M$ and $g\cdot(\lie_Ks) = \lie_{\Ad_g K}(g\cdot s)$, so
\begin{equation}
2i\pi \, g\cdot (\mu(K)s) = \big(\n^L_{(\Ad_g K)^M}-\lie_{\Ad_g K}\big)(g\cdot s),
\end{equation}
and thus
\begin{equation}
\mu(g^{-1}x) = \Ad^*_{g^{-1}}\mu.
\end{equation}

The proof of Lemma \ref{mu=moment-thm} is complete.
\end{proof}

\begin{lemme}
\label{action-loc-libre}
The group $G$ acts locally freely on $P$.
\end{lemme}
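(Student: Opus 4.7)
The plan is to show that for every $x \in P$, the isotropy Lie algebra $\g_x := \{K \in \g : K^M_x = 0\}$ is trivial; since $G$ is compact, this forces the stabilizer $G_x$ to be finite, which is exactly local freeness.

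First I would unpack what it means for $0$ to be a regular value. For $x \in P$, the differential $d\mu_x \colon T_xM \to \g^*$ is surjective, so its transpose $(d\mu_x)^\ast \colon \g \to T_x^\ast M$ is injective. The key observation, provided directly by the identity $d\mu(K) = i_{K^M}\omega$ established in Lemma \ref{mu=moment-thm}, is that for every $K \in \g$ and $v \in T_xM$,
\begin{equation*}
\langle d\mu_x(v), K\rangle \;=\; d(\mu(K))_x(v) \;=\; \omega_x(K^M_x, v),
\end{equation*}
so that $(d\mu_x)^\ast(K) = i_{K^M_x}\omega_x \in T_x^\ast M$.

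Now suppose $K \in \g_x$. Then $K^M_x = 0$, hence $(d\mu_x)^\ast(K) = i_{K^M_x}\omega_x = 0$. By injectivity of $(d\mu_x)^\ast$, this forces $K = 0$, so $\g_x = \{0\}$. Consequently the stabilizer $G_x$ is a closed subgroup of the compact group $G$ with trivial Lie algebra, i.e.\ a finite group. Since this holds at every $x \in P$, the action of $G$ on $P$ is locally free.

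I do not anticipate a genuine obstacle here: the whole argument is a one-line linear-algebra consequence of the moment map identity together with the definition of a regular value. The only thing to double-check is that the identity $d\mu(K) = i_{K^M}\omega$ is available at this point, which it is, having been proved in the previous lemma.
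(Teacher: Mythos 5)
Your proof is correct and takes essentially the same approach as the paper: both use the moment map identity $d\mu(K) = i_{K^M}\omega$ from Lemma \ref{mu=moment-thm} together with the surjectivity of $d_x\mu$ to conclude that $K^M_x = 0$ forces $K = 0$. You phrase the dual step as injectivity of $(d_x\mu)^\ast$ while the paper argues directly that the pairing $(d_x\mu(V))(K)$ vanishing for all $V$ plus surjectivity implies $K=0$; these are the same argument, and your added remark that a closed subgroup of a compact group with trivial Lie algebra is finite just makes explicit what the paper leaves implicit.
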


\begin{proof}
 By \eqref{mu=moment}, we have for $x\in P$, $V\in T_xM$ and $K\in \g$,
 \begin{equation}
\omega(K^M,V)_x = \big(d_x\mu(V)\big)(K).
\end{equation}
In particular, if $K^M=0$, then $\big(d_x\mu(V)\big)(K)$ vanishes for all $V\in T_xM$. However, by Assumption~\ref{assum-regular-value}, the differential $d_x\mu\colon T_xM \to \g^*$ is surjective, hence $K=0$.
\end{proof}

\begin{lemme}
\label{bL-ND} 
When restricted to $TY\times TY$, the bilinear form $b^L$ is non-degenerate on $P$.
\end{lemme}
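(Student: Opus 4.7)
My plan is to recast the non-degeneracy of $b^L|_{TY_x\times TY_x}$ at each $x\in P$ as the transversality statement $JTY_x\cap T_xP = \{0\}$, and then extract this transversality from Assumption~\ref{assum-regular-value} together with the $(1,1)$-type of $\omega$.

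First I would reformulate. By Lemma~\ref{action-loc-libre} the map $\g\to TY_x$, $K\mapsto K^M_x$, is a linear isomorphism, so $b^L|_{TY_x\times TY_x}$ is equivalent data to a symmetric bilinear form on $\g$; using $b^L = \omega(\cdot,J\cdot)$ together with the moment map identity of Lemma~\ref{mu=moment-thm} I compute
\[
b^L(K^M_x,K'^M_x) = \omega(K^M_x,JK'^M_x) = d\mu(K)_x(JK'^M_x).
\]
Hence the associated map $\g\to\g^*$ factors as
\[
\g \xrightarrow{\;K'\mapsto JK'^M_x\;} JTY_x \hookrightarrow T_xM \xrightarrow{\;d\mu_x\;} \g^*.
\]
The first arrow is an iso onto $JTY_x$ (Lemma~\ref{action-loc-libre} plus invertibility of $J$), and $\dim JTY_x = d = \dim \g^*$. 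So non-degeneracy of $b^L|_{TY_x}$ is equivalent to $d\mu_x|_{JTY_x}$ being an iso, i.e., to $JTY_x\cap T_xP = \{0\}$, with $T_xP=\ker d\mu_x$ via Assumption~\ref{assum-regular-value}.

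The heart of the proof is therefore this transversality. Suppose $v=JK^M_x\in T_xP$; then $0 = d\mu(K')_x(v) = \omega(K'^M_x,JK^M_x)$ for every $K'\in\g$. Using the $(1,1)$-identity $\omega(J\cdot,J\cdot)=\omega(\cdot,\cdot)$ one rewrites this as $\omega(K^M_x,JK'^M_x) = d\mu(K)_x(JK'^M_x) = 0$ for all $K'\in\g$. Combined with the tautological vanishing $d\mu(K)_x(w)=0$ on $w\in T_xP$, the $1$-form $d\mu(K)_x$ thus annihilates $T_xP + JTY_x$. Since by Assumption~\ref{assum-regular-value} the transposed map $K\mapsto d\mu(K)_x$ from $\g$ to $T_x^*M$ is injective, concluding $K=0$ (and hence $v=0$) amounts to showing $T_xP + JTY_x = T_xM$.

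The main obstacle is that $T_xP+JTY_x=T_xM$ is, by dimension count, tautologically equivalent to the very $JTY_x\cap T_xP=\{0\}$ we are after. Closing this loop is the substantive step and will require genuinely using the surjectivity of $d\mu_x$ onto all of $\g^*$ rather than only the injectivity of its transpose: I would pass to the quotient $T_xM/\ker\omega_x$ (on which $\omega$ descends to a symplectic form and $J$ to a compatible complex structure, since $\ker\omega_x$ is $J$-invariant), observe that $TY_x$ injects there (by the same regularity), and then argue that within that symplectic quotient the isotropic subspace $TY_x$ and its $J$-image $JTY_x$ must meet $TY_x^{\perp_\omega}$ (which is the image of $T_xP$) only at $0$, forcing the transversality.
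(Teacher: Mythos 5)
Your reformulation — that non-degeneracy of $b^L|_{TY_x\times TY_x}$ is equivalent to $JTY_x\cap T_xP=\{0\}$ — is correct, and it is exactly what the paper's computation \eqref{b^L-and-dmu}--\eqref{b^L(K^m,JV)} establishes: $(TY)^{\perp_{b^L}}|_P = J(T_xP)$, so $TY\cap(TY)^{\perp_{b^L}}=\{0\}$ is equivalent (applying $J$) to $JTY_x\cap T_xP=\{0\}$. You have also correctly located where the circularity lies: using only the injectivity of $K\mapsto d\mu(K)_x$, one needs $T_xP+JTY_x=T_xM$ as input, and by the count $\dim T_xP+\dim JTY_x=2n$ that is the very statement to be proved.

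The gap is in your last paragraph. The passage to $T_xM/\ker\omega_x$ is not carried out, and as sketched it cannot succeed: in the quotient one does obtain a symplectic form $\bar\omega$ and an $\bar\omega$-preserving complex structure $\bar J$, but the quadratic form $\bar\omega(\cdot,\bar J\cdot)$ inherited from $b^L$ is still \emph{not} positive (nothing in the hypotheses makes $\omega$ positive), so ``$\bar\omega$-compatible'' here does not mean K\"ahler-compatible. Without positivity there is no linear-algebraic reason that an isotropic subspace and its $\bar J$-image must be transverse to the $\bar\omega$-orthogonal of the former. For instance in $\C^2$ with $\omega=dx_1\wedge dy_1-dx_2\wedge dy_2$, $J$ standard, and $TY=\mathrm{span}(\partial_{y_1}+\partial_{y_2})$, one has $TY$ isotropic, $i_{K^M}\omega=-dx_1+dx_2\neq 0$, and yet $JTY\subset TY^{\perp_\omega}$; so the missing transversality is not a routine consequence of the symplectic-quotient structure, and the argument as proposed does not close. (For reference, after computing $(TY)^{\perp_{b^L}}|_P=J(T_xP)$ of dimension $2n-d$ the paper's proof concludes by writing $TY+(TY)^{\perp_{b^L}}=TU$ and counting dimensions; that identity is precisely the transversality you flagged as the substantive point, so your diagnosis of where the real content must lie is accurate — but you have not supplied it.)
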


\begin{proof}
 First, observe that for $x\in P$, $V\in T_xM$ and $K\in \g$, equations  \eqref{def-b^L} and \eqref{mu=moment} yield
 \begin{equation}
 \label{b^L-and-dmu}
b^L(K^M,JV)_x = -\omega(K^M,V)_x = -\big(d_x\mu(V)\big)(K) .
\end{equation}

Let $x\in P$, $V\in T_xM$ and $K\in \g$. Then by  \eqref{b^L-and-dmu}
\begin{equation}
\label{b^L(K^m,JV)}
JV \in \left.(TY)^{\perp_{b^L}}\right|_{P} \iff d_x\mu(V) = 0 \iff V\in TP,
\end{equation}
the last equivalence coming from the fact that $P=\mu^{-1}(0)$. In particular, $\dim (TY)^{\perp_{b^L}} = \dim TP = 2n-d$, the last identity coming from the fact that $0$ is a regular value of $\mu$. Moreover, $\dim TY = d$ (because $G$ acts locally freely on $U$) and $TY+(TY)^{\perp_{b^L}}=TU$. This is possible only if this sum is direct, i.e., $TY\cap (TY)^{\perp_{b^L}}=\{0\}$. We have proved our lemma.
\end{proof}

By Lemma \ref{bL-ND}, we have
\begin{equation}
\label{decompo-TU-2}
TU = TY \oplus (TY)^{\perp_{b^L}}.
\end{equation}
 Then we can choose the horizontal bundles of the fibrations $U\to B$ and $P\to M_G$ to be
\begin{equation}
\label{def-espaces-horizontaux}
T^H U = (TY)^{\perp_{b^L}} \quad \text{and } \quad T^H P = \left.T^HU\right|_{P}\cap TP.
\end{equation}
Indeed, using \eqref{decompo-TU-2} and the fact that $TY\subset TP$, we see that 
\begin{equation}
\label{TP=TY+THP-2}
TP=TY\oplus T^H P.
\end{equation} 

Let $(L_G,h^{L_G},\n^{L_G})$ and $(E_G,h^{E_G},\n^{E_G})$ be defined from $(L,h^L,\n^L)$, $(E,h^E,\n^E)$ and $T^HP$ as indicated in Section \ref{Sect-connections}. We also define $\omega_G$ by
\begin{equation}
\label{def-omegaG}
\omega_G(V,V') = \omega (V^H,V'^H).
\end{equation}

Note that \eqref{formule-RFB} restricted to $P=\mu^{-1}(0)$ gives
\begin{equation}
\label{courbure-sur-MG}
R^{L_B}|_{M_G} (V,V') = R^L|_{P}(V^H,V'^H).
\end{equation}
From \eqref{prequantization}, \eqref{def-omegaG} and \eqref{courbure-sur-MG}, we see that if $R^{L_G}$ is the curvature of $\n^{L_G}$, then 
\begin{equation}
\label{prequantization_G}
\omega_G = \frac{\ic}{2\pi} R^{L_G}.
\end{equation}

\begin{lemme}
\label{TY^perp=JTP}
We have
 \begin{equation}
 \label{T^HU=JTP-eq} 
 \begin{aligned}
& \left.T^HU\right|_{P} = JTP \\
&TU |_P= TP \oplus JTY.
\end{aligned}
\end{equation}
In the second line, the sum is orthogonal with respect to $b^L$.
\end{lemme}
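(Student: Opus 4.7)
The plan is to leverage the key equivalence
\begin{equation*}
JV \in \left.(TY)^{\perp_{b^L}}\right|_{P} \iff V\in TP
\end{equation*}
established in the proof of Lemma \ref{bL-ND} (see \eqref{b^L(K^m,JV)}). Since by definition \eqref{def-espaces-horizontaux} we have $T^HU|_P = (TY)^{\perp_{b^L}}|_P$, this equivalence already contains most of the content of the lemma.

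For the first identity in \eqref{T^HU=JTP-eq}, I would argue by double inclusion. If $V\in TP$, then the equivalence gives $JV\in T^HU|_P$, so $J(TP)\subseteq T^HU|_P$. Conversely, any $W\in T^HU|_P$ can be written $W = J(-JW)$; applying the equivalence with $V=-JW$ yields $-JW\in TP$, whence $W\in J(TP)$.

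For the second identity, the key observation is that $T^HP = T^HU|_P\cap TP = JTP\cap TP$ is $J$-invariant (since $J^2=-\Id$ preserves both $TP$ and $JTP$). Combined with \eqref{TP=TY+THP-2}, this gives
\begin{equation*}
JTP = JTY \oplus JT^HP = JTY \oplus T^HP,
\end{equation*}
and consequently
\begin{equation*}
TU|_P = TY \oplus T^HU|_P = TY \oplus JTP = TY\oplus JTY \oplus T^HP = JTY \oplus TP,
\end{equation*}
as desired.

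Finally, the $b^L$-orthogonality of this last decomposition is a direct computation: for $V\in TP$ and $K\in\g$, using $b^L(\cdot,\cdot)=\omega(\cdot,J\cdot)$, $J^2=-\Id$, the skew-symmetry of $\omega$ and \eqref{mu=moment},
\begin{equation*}
b^L(V,JK^M) = -\omega(V,K^M) = \omega(K^M,V) = (d\mu(K))(V) = 0,
\end{equation*}
since $V\in TP=\ker d\mu$. There is no substantive obstacle: all the analytic content sits in \eqref{b^L(K^m,JV)}, and the rest is bookkeeping with the fact that $TP\cap JTP$ is automatically $J$-stable.
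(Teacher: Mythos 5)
Your proof is correct, and for the second identity it takes a genuinely different route from the paper. For the first identity you and the paper agree: both read off $T^HU|_P = JTP$ from the equivalence \eqref{b^L(K^m,JV)} (you just unpack the double inclusion explicitly). For the second identity, the paper's argument \emph{derives} the decomposition $TU|_P = TP \oplus JTY$ from the $b^L$-orthogonality: it first checks $b^L(JK^M,V)=0$ for $V\in TP$, then invokes non-degeneracy of $b^L$ on $JTY$ (to get directness, since a vector in $TP\cap JTY$ would be $b^L$-orthogonal to all of $JTY$ and hence zero) together with the dimension count $\dim TU = \dim TP + \dim JTY$. You instead build the decomposition purely by pushing the earlier splittings $TU|_P = TY\oplus T^HU|_P$ and $TP = TY\oplus T^HP$ through $J$, after noting that $T^HP = TP\cap JTP$ is $J$-stable; orthogonality is then verified as a separate, independent computation rather than being the engine of the proof. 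Both arguments rest on the same foundation (Lemma \ref{bL-ND}, \eqref{b^L(K^m,JV)}, \eqref{TP=TY+THP-2}), but yours has the mild advantage of not invoking non-degeneracy of $b^L$ on $JTY$ a second time, and it makes the $J$-stability of $T^HP$ — which the paper records only afterwards as \eqref{THP=TP-inter-JTP} — do useful work earlier. No gap; the parenthetical "since $J^2=-\Id$ preserves both $TP$ and $JTP$" is terse but the intended point ($J(TP\cap JTP)\subseteq JTP\cap TP$) is sound.
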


\begin{proof}
Recall that $T^HU  = (TY)^{\perp_{b^L}}$. Thus the first identity in \eqref{T^HU=JTP-eq}  follows from \eqref{b^L(K^m,JV)}. 

Concerning the second, we have  for $V\in TP$ and $K\in \g$,
 \begin{equation}
 \label{b^L(JK^m,V)}
b^L(JK^M,V)_x = \omega(K^M,V)_x =\big(d_x\mu(V)\big)_x(K)=0 .
\end{equation}
Using \eqref{b^L(JK^m,V)} and the facts  that $b^L$ is non-degenerate on $JTY$ and that $\dim TU = \dim TP +\dim JTY$, we get the second identity in \eqref{T^HU=JTP-eq}.
\end{proof}

Using Lemma \ref{TY^perp=JTP} and \eqref{TP=TY+THP-2}, we find firstly
\begin{equation}
\label{decompo-TU|P}
TU |_P= T^HP\oplus TY \oplus JTY,
\end{equation} 
the decomposition being orthogonal for $b^L$, and secondly by Lemma \ref{TY^perp=JTP} and \eqref{def-espaces-horizontaux},
\begin{equation}
\label{THP=TP-inter-JTP}
T^HP = TP\cap JTP.
\end{equation}
In particular, $T^HP$ is stable by $J$, so we can define an almost-complex structure on $M_G$ in the following way. For $V\in TM_G$, we denote $V^H$ its lift in $T^HP$, and we define the almost complex structure $J_G$ on $M_G$ by
\begin{equation}
\label{def-J_G}
(J_GV)^H  = J (V^H).
\end{equation}

\begin{lemme}
\label{J_G-integrable}
The almost complex structure  $J_G$ is integrable, thus $(M_G,J_G)$ is a complex manifold.
\end{lemme}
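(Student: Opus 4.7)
The plan is to verify integrability of $J_G$ via the Newlander--Nirenberg theorem: it suffices to show that the complex subbundle $T^{(0,1)}M_G\subset TM_G\otimes\C$ is involutive under the Lie bracket. Since this is a local question on the orbifold $M_G$, I would work in equivariant uniformizing charts and reduce to the situation in which $G$ acts freely on $P$; the general locally free case follows because the finite uniformizing isotropy preserves all structures involved.

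The key input will be the pointwise identity, valid along $P$,
\[
T^{(0,1)}M|_P \cap (TP\otimes\C) = (T^HP)^{(0,1)}.
\]
This is where the nontrivial geometric fact \eqref{THP=TP-inter-JTP} enters: if $V+iW \in T^{(0,1)}M$ with real $V,W \in TP$, then $JV=W$ and $JW=-V$, so both $V,W\in TP\cap JTP = T^HP$; the reverse inclusion is immediate because $T^HP$ is $J$-stable.

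Granted this, I would take local sections $V, W$ of $T^{(0,1)}M_G$, lift them horizontally to sections $V^H, W^H$ of $(T^HP)^{(0,1)}$ on $P$ by \eqref{def-J_G}, and then extend them arbitrarily to local sections $\widetilde V, \widetilde W$ of $T^{(0,1)}M$ on a neighborhood of a fiber in $M$ (possible because $T^{(0,1)}M$ is a smooth subbundle). Writing $\widetilde V = X + iJX$ for a real vector field $X$, the pointwise identity shows that both $X$ and $JX$ take values in $TP$ along $P$; likewise for $\widetilde W$. Integrability of $J$ on $M$ then gives $[\widetilde V,\widetilde W]\in T^{(0,1)}M$, while the standard fact that brackets of vector fields tangent to a submanifold restrict to intrinsic brackets ensures $[\widetilde V,\widetilde W]|_P = [V^H,W^H]$ as sections of $TP\otimes\C$. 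The pointwise identity then forces $[V^H,W^H] \in (T^HP)^{(0,1)}$. Finally, horizontal lifts are $G$-invariant and $\pi$-related to their base fields, so $\pi_*[V^H,W^H]=[V,W]$; since $\pi_*$ intertwines $J|_{T^HP}$ with $J_G$ by \eqref{def-J_G}, we obtain $[V,W]\in T^{(0,1)}M_G$, as required. The main obstacle is really the pointwise identity: it crucially depends on the $J$-stability of $T^HP$ established in \eqref{THP=TP-inter-JTP}, and without that feature the restriction to $P$ of an antiholomorphic extension could have a nonzero component outside $(T^HP)^{(0,1)}$, in which case the descent would fail.
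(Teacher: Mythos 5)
Your argument is correct and follows essentially the same route as the paper's: both reduce to the Newlander--Nirenberg criterion, both rest on the identity $T^{(0,1)}M|_P\cap (T P\otimes\C)=(T^HP)^{(0,1)}$ coming from \eqref{THP=TP-inter-JTP}, and both conclude by pushing the bracket of horizontal lifts down via $\pi_*$. The only difference is that you spell out two points the paper treats implicitly: the extension of the horizontal lifts off $P$ in order to invoke involutivity of $T^{(0,1)}M$, and the reduction to uniformizing charts for the orbifold case.
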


\begin{proof}
  Let $u,v \in \smooth (M_G,T^{1,0}M_G)$. Then there are $U,V \in \smooth (M_G,T M_G)$ such that
\begin{equation}
u=U-\ic J_GU \: , \quad v=V-\ic J_GV.
\end{equation}
Using \eqref{def-J_G}, we find
\begin{equation}
u^H=U^H-\ic JU^H \: , \quad v=V^H-\ic JV^H \in T^{1,0}M \cap T_\C P.
\end{equation}
As both $T^{1,0}M$ and $T_\C P$ are integrable, we have $[u^H,v^H] \in T^{1,0}M \cap T_\C P$, i.e., there is $W\in \smooth(M,TM)$ such that
\begin{equation}
[u^H,v^H]=W-\ic JW,
\end{equation}
and moreover $W,JW \in TP$. Thus, $W\in TP\cap JTP= T^HP$ and we can write $W = X^H$ for $X$ a section of $TM_G$. Hence
\begin{equation}
\label{[u,v]}
[u,v] = \pi_*[u^H,v^H] = \pi_*(X^H-\ic JX^H) = X-\ic J_GX \in  T^{1,0}M_G.
\end{equation}
By the Newlander-Nirenberg theorem, \eqref{[u,v]} means that  $J_G$ is integrable.
\end{proof}

\begin{lemme}
\label{L_G-holomorphic}
The bundles $L_G$ and $E_G$ are holomorphic. Moreover, $\n^{L_G}$ and $\n^{E_G}$ are the respective Chern connections of $(L_G,h^{L_G})$ and $(E_G,h^{E_G})$.
\end{lemme}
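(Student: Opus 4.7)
The strategy is to apply the standard criterion: a Hermitian (orbifold) vector bundle on a complex manifold equipped with a metric-compatible connection whose curvature is of type $(1,1)$ admits a unique holomorphic structure for which the given connection is the Chern connection. For $F\in\{L,E\}$, the connection $\n^{F_G}$ is defined by $\n^{F_G}_U s=\n^F_{U^H}s$ (see \eqref{def-connection-down}) and $h^{F_G}$ is induced from $h^F$, so the compatibility $\n^{F_G}h^{F_G}=0$ follows immediately from $\n^Fh^F=0$. The proof thus reduces to checking $(R^{F_G})^{0,2}=0$ on $M_G$, which I will handle through the curvature formula \eqref{formule-RFB} applied to the $G$-principal fibration $P\to M_G$.

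The main geometric input will be $\Theta^{0,2}|_{M_G}=0$. By \eqref{THP=TP-inter-JTP}, $T^HP=TP\cap JTP$ is $J$-stable, so horizontal lifts intertwine $J_G$ and $J$ and send $T^{0,1}M_G$ into $T^{0,1}(T^HP)\subset T^{0,1}M$. For $V,V'\in T^{0,1}M_G$, the Newlander--Nirenberg integrability of $J$ on $M$ (applied to local $(0,1)$-extensions of $V^H,V'^H$ from $P$ to $M$) yields $[V^H,V'^H]\in T^{0,1}M|_P$, while tangency of $V^H,V'^H$ to $P$ puts the bracket in $TP_\C$. It remains to identify $TP_\C\cap T^{0,1}M=T^{0,1}(T^HP)$. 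The orthogonal decomposition $TU|_P=TP\oplus JTY$ of Lemma \ref{TY^perp=JTP}, together with $TY\subset TP$, gives $TY\cap JTY=\{0\}$; writing an arbitrary element of $TY_\C$ as $A+\ic B$ with $A,B\in TY$ and imposing $J(A+\ic B)=-\ic(A+\ic B)$ forces $JA=B\in TY\cap JTY=\{0\}$, so $TY_\C\cap T^{0,1}M=0$. Thus all of $TP_\C\cap T^{0,1}M$ sits in $T^HP_\C$, and consequently $P^{TY}[V^H,V'^H]=0$, i.e., $\Theta(V^H,V'^H)=0$.

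Combining with \eqref{formule-RFB} yields, for $V,V'\in T^{0,1}M_G$,
\begin{equation*}
R^{F_G}(V,V')=R^F(V^H,V'^H)-\mu^F\bigl(\Theta(V,V')\bigr),
\end{equation*}
whose first term vanishes because $R^F$ is of type $(1,1)$ and $V^H,V'^H$ are $(0,1)$ in $TM$, and whose second vanishes by the previous step. Hence $(R^{F_G})^{0,2}=0$ for both $F=L$ and $F=E$. For $F=L$ there is a more elementary route: since $\mu^L=2\ic\pi\mu$ vanishes identically on $P$ by definition of $P$, \eqref{formule-RFB} simplifies on $M_G$ to $R^{L_G}(V,V')=R^L(V^H,V'^H)$ for all $V,V'\in TM_G$, and the $(1,1)$-type of $R^L$ transfers directly to $R^{L_G}$ without invoking $\Theta^{0,2}=0$. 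The delicate point in the whole argument is thus the identification $TP_\C\cap T^{0,1}M=T^{0,1}(T^HP)$, which rests ultimately on the transversality $TY\cap JTY=0$ provided by Assumption \ref{assum-regular-value}.
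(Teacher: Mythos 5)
Your overall strategy is the same as the paper's: reduce to $(R^{F_G})^{0,2}=0$ (for $F=L,E$) via \eqref{formule-RFB}, and establish this by showing that the bracket of horizontal lifts of $(0,1)$ vectors remains horizontal, i.e.\ that $\Theta$ has no $(0,2)$-component over $M_G$. However, the deduction ``$TY_\C\cap T^{0,1}M=0$; thus all of $TP_\C\cap T^{0,1}M$ sits in $T^HP_\C$'' is a genuine gap. For a direct sum $V=V_1\oplus V_2$ and a subspace $W$, the condition $W\cap V_1=0$ does not force $W\subset V_2$ (the diagonal line in $\R^2$ meets the $x$-axis only at the origin without lying in the $y$-axis). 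Here a vector $w\in TP_\C\cap T^{0,1}M$ decomposes as $w=w_Y+w_H$ with $w_Y\in TY_\C$ and $w_H\in T^HP_\C$, and nothing you have established forces $w_Y$ itself to lie in $T^{0,1}M$, so the vanishing $TY_\C\cap T^{0,1}M=0$ cannot be applied to conclude $w_Y=0$.

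The paper closes exactly this step by a shorter route that you should use instead: if $w\in TP_\C\cap T^{0,1}M$, then $Jw=-\ic\, w\in TP_\C$ because $TP_\C$ is a $\C$-subspace, so $w\in J^{-1}(TP_\C)=JTP_\C$; hence $w\in TP_\C\cap JTP_\C$, which equals $(T^HP)_\C$ by \eqref{THP=TP-inter-JTP}. This one-line ``rotate by $J$'' observation does not require your computation $TY_\C\cap T^{0,1}M=0$ at all. The remaining parts of your proposal --- metric compatibility of the induced connections, the standard criterion that a metric connection with $(1,1)$-curvature determines a holomorphic structure, the reduction of the $(0,2)$-curvature to $\Theta^{0,2}$ via \eqref{formule-RFB}, and the shortcut for $L_G$ using $\mu|_P=0$ --- are correct and in line with the paper's proof.
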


\begin{proof}
We first prove the result for $L_G$. 

Observe that for $U,V\in TM_G$,
\begin{equation}
\omega_G(J_GU,J_GV) = \omega(JU^H,JV^H)=\omega(U^H,V^H) = \omega_G(U,V).
\end{equation}
Hence, $\omega_G$ is a $(1,1)$-form, and so is $R^{L_G}$ by \eqref{prequantization_G}. We decompose $\n^{L_G}$ into holomorphic part and anti-holomorphic part,
\begin{equation}
\n^{L_G} = (\n^{L_G})^{1,0}+(\n^{L_G})^{0,1}.
\end{equation}
As $R^{L_G}$ is $(1,1)$, we have
\begin{equation}
\label{R^(0,2)}
\big((\n^{L_G})^{0,1}\big)^2 =0.
\end{equation}

For $s\in \smooth(M_G,L_G)$, we define
\begin{equation}
\label{def-db^L_G}
\db^{L_G} s = (\n^{L_G})^{0,1}s.
\end{equation}
Let $s_0$ be a local frame of $L_G$ near $x_0\in M_G$. Then we can write $(\n^{L_G})^{0,1}s_0 = \alpha s_0$ for some $(0,1)$-form $\alpha$. By \eqref{R^(0,2)}, we have
\begin{equation}
0= \big((\n^{L_G})^{0,1}\big)^2s_0 = (\db\alpha)s_0.
\end{equation}
Thus, $\db \alpha = 0$. By the (local) $\db$-lemma, there is a function $f$ defined near $x_0$ such that $\db f = - \alpha$. Thus,
\begin{equation}
\db^{L_G} s_0 +(\db f) s_0 =0.
\end{equation}
This shows that \eqref{def-db^L_G} defines a holomorphic structure on $L_G$, for wich $e^f s_0$ is a local holomorphic frame near $x_0$. 

Finally, $\n^{L_G}$ is clearly Hermitian with respect to $h^{L_G}$, and is holomorphic by the definition  \eqref{def-db^L_G}, so $\n^{L_G}$ is indeed the Chern connection on $L_G$.

We now turn to $E_G$. Here again, it is enough to prove that $R^{E_G}$ is a $(1,1)$-form (see for instance \cite[Prop. I.3.7]{MR909698}).  As $R^E$ is a $(1,1)$-form,  \eqref{formule-RFB} shows that it is equivalent to prove that $\Theta|_{T^HP\times T^HP}$ is a $(1,1)$-form. 

Let $u=U-\ic JU$ and  $v=V-\ic JV$ be in $(T^HP)^{1,0}$. As $U,V,JU$ and $JV$ are in $T^HP=TP\cap JTP$ and $TP$ is integrable, we have $[u,v]\in T_\C P$. Moreover, as  $u$ and $v$ are of type $(1,0)$ and $J$ is integrable,  $[u,v]$ is also of type $(1,0)$, and thus $[u,v]=-iJ[u,v]\in JT_\C P$. In conclusion, $[u,v]\in T^H_\C P$ and by \eqref{def-Theta}, $\Theta(u,v)=0$.
\end{proof}

\begin{lemme}
\label{ker-omega_G}
We have $\ker \omega|_P \subset T^HP$, and for $x\in M_G$, $\pi_*$ induces an isomorphism
\begin{equation}
\label{noyau-descente}
(\ker \omega_G)_x \simeq (\ker \omega)|_{\pi^{-1}(x)}.
\end{equation}\end{lemme}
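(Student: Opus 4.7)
The plan is to split the proof into two halves: first establish the inclusion $\ker \omega|_P \subset T^HP$, and then use it together with the definition $\omega_G(U,V) = \omega(U^H,V^H)$ to get the isomorphism.

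For the inclusion, I would fix $y\in P$ and $V\in(\ker\omega)_y$. The key is to exploit two facts simultaneously: first, $V$ is horizontal for $\mu$ because, by \eqref{mu=moment}, $(d_y\mu(V))(K) = \omega(K^M,V)_y = 0$ for every $K\in\g$, so Assumption \ref{assum-regular-value} gives $V\in T_yP$. Second, $\omega$ is of type $(1,1)$, hence $J$-invariant, so $\ker\omega$ is a $J$-stable subspace. Applying the first observation to $JV\in\ker\omega$ then yields $JV\in T_yP$, i.e. $V\in JT_yP$. Combining the two gives $V\in T_yP\cap JT_yP = T^H_yP$ by \eqref{THP=TP-inter-JTP}.

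For the isomorphism, let $x\in M_G$ and $y\in\pi^{-1}(x)$. Since $\ker\omega|_y\subset T^H_yP$ by the previous step, and $\pi_*$ restricts to a linear isomorphism $T^H_yP\isom T_xM_G$, the map $\pi_*\colon(\ker\omega)_y\to T_xM_G$ is automatically injective. Its image lies in $(\ker\omega_G)_x$: indeed, if $V\in(\ker\omega)_y$, then for any $W\in T_xM_G$ one has $\omega_G(\pi_*V,W) = \omega(V,W^H) = 0$ by the very definition \eqref{def-omegaG} of $\omega_G$. For surjectivity, pick $W\in(\ker\omega_G)_x$ and consider $W^H\in T^H_yP$. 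I need to check that $\omega(W^H,\cdot)$ vanishes on each summand of the decomposition $T_yU = T^HP\oplus TY\oplus JTY$ from \eqref{decompo-TU|P}. On $T^HP$ it is immediate from $\omega(W^H,V^H) = \omega_G(W,\pi_*V) = 0$. On $TY$ it follows from \eqref{b^L(JK^m,V)} (applied with $V=W^H\in TP$), which gives $\omega(K^M,W^H)=(d_y\mu(W^H))(K)=0$. On $JTY$, use that $\omega$ is of type $(1,1)$: for any $K\in\g$, $\omega(W^H,JK^M) = -\omega(JW^H,K^M)$, and $JW^H$ again lies in $T^HP\subset TP$ (because $T^HP$ is $J$-stable), so the previous vanishing applies.

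The main potential obstacle is the vertical check for surjectivity, specifically that $\omega(W^H,JK^M)=0$ — this is where one must combine the $J$-invariance of $\omega$ with the $J$-stability of $T^HP$ given by \eqref{THP=TP-inter-JTP} rather than appeal directly to the horizontal definition of $\omega_G$. Once this is in place, combining injectivity and surjectivity gives the claimed isomorphism $(\ker\omega_G)_x\simeq(\ker\omega)|_{\pi^{-1}(x)}$.
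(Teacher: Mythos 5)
Your proof is correct. The difference from the paper's own argument is mostly one of packaging. For the inclusion $\ker\omega|_P\subset T^HP$, the paper observes that $\ker\omega = \ker b^L$ and then reads off the conclusion from the $b^L$-orthogonal decomposition $TU|_P = T^HP\oplus TY\oplus JTY$ of \eqref{decompo-TU|P}; you instead use the moment-map identity $d\mu(K)=i_{K^M}\omega$ to get $V\in TP$, then $J$-stability of $\ker\omega$ to get $V\in JTP$, and conclude via $T^HP = TP\cap JTP$ from \eqref{THP=TP-inter-JTP}. These are equivalent, but your version makes the role of Assumption~\ref{assum-regular-value} and of the $(1,1)$-type of $\omega$ explicit, while the paper's is shorter because it leans on already-established orthogonality. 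For surjectivity, the paper again appeals to $b^L$-orthogonality of \eqref{decompo-TU|P} in one stroke, whereas you verify vanishing on $TY$ and $JTY$ separately — the $TY$ case via \eqref{b^L(JK^m,V)}, the $JTY$ case via the identity $\omega(W^H,JK^M) = -\omega(JW^H,K^M)$ plus $J$-stability of $T^HP$. Both routes are sound; yours is more elementary in spirit and avoids re-invoking $b^L$, at the cost of a few extra lines.
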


\begin{proof}
 Let $V\in TU|_P$ be such that $\omega(V,\cdot)=0$. Then we also have $b^L(V,\cdot)=0$. Thus $V$ is  in particular in $(TY)^{\perp_{b^L}}=T^HU$. Moreover, $V$ is also orthogonal (for $b^L$) to $JTY$, so is in $T^HP$ by Lemma \ref{TY^perp=JTP}. 
 
 As $\omega_G(\pi_*\cdot \,, \pi_*\cdot) = \omega(\cdot \,,\cdot)$, we know that $\pi_*$ maps $\ker \omega|_P$ in $\ker \omega_G$, and is injective as $\ker \omega|_P \subset T^HP$. Finally, if $V\in \ker \omega_G$, then $\omega(V^H, V') = 0$  for $V'\in T^HP$. In fact, as the decomposition in \eqref{decompo-TU|P} is orthogonal for $b^L$, we have $\omega(V^H, V') = 0$  for $V'\in TU|_P$, and thus $V^H \in \ker \omega$. The proof of our lemma is complete.
\end{proof}

By  \eqref{prequantization_G} and Lemmas \ref{J_G-integrable}, \ref{L_G-holomorphic} and \ref{ker-omega_G} we have proved Theorem \ref{descent}.


\subsection{The Kodaira Laplacian and the operator induced on $B$}
\label{Sect-operator}

We define the vector bundle $\mathcal{E}$, and $\E_p$ ($p\geq1$) over $M$ by
\begin{equation}
\begin{aligned}
&\mathcal{E}= \Wedge(T^*M)\otimes E. \\
&\E_{p}= \Wedge(T^*M)\otimes E \otimes L^p.
\end{aligned}
\end{equation}

 Recall that $g^{TM}$ is a $J$- and $G$-invariant metric on $TM$ (we do not assume that \eqref{g^TM|_P} holds in this section). We endow $\smooth(M,\E_p)$ with the $L^2$ scalar product associated with $g^{TM}$, $h^L$ and $h^E$ as in \eqref{def-L^2-product}. Then the Dolbeault-Dirac operator $D_p$ defined in \eqref{def-D_p} is a  formally self-adjoint operator acting on $\smooth(M,\E_p)$.

We now recall the Lichnerowicz formula for the Kodaira Laplacian $D_p^2$.

Let $\n^{TM}$ be the Levi-Civita connection on $(M,g^{TM})$. We denote by $P^{T^{(1,0)}M}$ the orthogonal projection form $TM\otimes_\R \C$ onto $T^{(1,0)}M$. Let $\n^{T^{(1,0)}M}= P^{T^{(1,0)}M}\n^{TM}P^{T^{(1,0)}M}$ be the induced connection on $T^{(1,0)}M$. We endow $\det(T^{1,0}M)$ with the metric induced by $g^{TM}$, and we denote by  $\n^{\det}$ the Hermitian connection on $\det(T^{1,0}M)$  induced by $\n^{T^{(1,0)}M}$. Let $R^{\det}$  be the curvature of $\n^{\det}$.

 Let $(w_1,\dots,w_n)$ be an orthonormal frame of $(T^{(1,0)}M,g^{TM})$, and $(e_1,\dots,e_{2n})$ be the orthonormal frame of $(TM,g^{TM})$ given by
\begin{equation}
e_{2j-1}=\frac{1}{\sqrt{2}}(w_j+\bw_j) \quad \text{ and }\quad e_{2j}=\frac{\ic}{\sqrt{2}}(w_j-\bw_j).
\end{equation}
Let $\{e^k\}$ be the dual basis of $\{e_k\}$.
The Clifford action of $T^*_\C M$ on $\Wedge(T^*M)$ is defined by linearity from
\begin{equation}
c(w_j):= \sqrt{2}\bw^j\wedge  \quad\text{and} \quad c(\bw_j):=-\sqrt{2}i_{\bw_j}.
\end{equation}
We then define a map, still denoted by $c(\cdot)$, on $\Lambda(T^*_\C M)$ by setting for $j_1<\dots<j_k$:
\begin{equation}
c(e^{j_1}\wedge \dots \wedge e^{j_k}) := c(e_{j_1})\dots c(e_{j_k}).
\end{equation}

Let $\Gamma^{TM}$ and $\Gamma^{\det}$ be the connection forms of $\n^{TM}$ and $\n^{\det}$ associated with the frames $\{e_i\}$ and $w_1\wedge \dots \wedge w_n$. Define the the \emph{Clifford connection} on $\Wedge(T^*M)$ (see \cite[(1.3.5)]{ma-marinescu}) by the following local formula in the frame $\{\bw^{i_1}\wedge\dots\wedge \bw^{i_k}\}$:
\begin{equation}
\label{def-nablaCl}
\nabla^{\mathrm{Cl}}= d + \frac{1}{4}\sum_{i,j} \langle \Gamma^{TM}e_i,e_j \rangle c(e_i)c(e_j) + \frac{1}{2} \Gamma^{\det}.
\end{equation}
We also denote by $\n^{\mathrm{Cl}}$ the connection on $\mathcal{E}$ induced by $\n^{\mathrm{Cl}}$ and $\n^E$.

Let $\Omega$ be the real $(1,1)$-form defined by
 \begin{equation}
 \label{def-Omega}
\Omega = g^{TM}(J\cdot\,,\cdot).
\end{equation}
On $\Wedge(T^*M)$, we define the \emph{Bismut connection} $\n^{\mathrm{Bi}}$ by
\begin{equation}
\label{def-nablaB}
\n^{\mathrm{Bi}}_V = \n^{\mathrm{Cl}}_V +\frac{\ic}{4}c\big(i_V(\partial -\db)\Omega\big).
\end{equation}
This connection, along with $\n^E$ and $\n^L$, induces  connections $\n^{\mathcal{E}}$ and $\n^{\E_p}$ on $\mathcal{E}$ and $\E_p$. Moreover, we know that (see e.g. \cite[Thm. 1.4.5]{ma-marinescu})
\begin{equation}
\label{formule-Dp}
D_p = \sum_{i=1}^{2n}c(e_i)\n^{\E_p}_{e_i}.
\end{equation}

Let $\Delta ^{\E_p}$ is the Bochner Laplacian on $\E_p$ induced by  $\n^{\E_p}$. It is given by the following formula: if $(g^{ij})$ is the inverse of the matrix $(g_{ij})=(g^{TM}_Z(e_i,e_j))$, then
\begin{equation}
\label{def-laplacien-de-Bochner}
\Delta^{\E_p}= -g^{ij} \left(\n^{\E_p}_ {e_i}\n^{\E_p}_ {e_j} -\n^{\E_p}_ {\nabla ^{TM}_{e_i}e_j} \right).
\end{equation}

Let $r^M$ be the scalar curvature of $(M,g^{TM})$.  Let $\Psi_\mathcal{E}$ be the smooth self-adjoint section of $\End(\mathcal{E})$ given by
\begin{equation}
\Psi_\mathcal{E} = \frac{r^M}{4}+c\big(R^E +\frac{1}{2}R^{\det}\big) + \frac{\ic}{2}c(\db\partial \Omega)-\frac{1}{8} \big|(\partial -\db)\Omega\big|^2.
\end{equation}
 Set also
 \begin{equation}
\begin{aligned}
&\omega_d = -\sum_{i,j} R^{L}(w_i,\bw_j) \bw^j\wedge i_{\bw_i}, \\
&\tau = \sum_i R^L(w_i,\bw_i). \\
\end{aligned}
\end{equation}

The Lichnerowicz formula (see for instance \cite[Thm. 1.4.7 and (1.5.17)]{ma-marinescu}) reads
\begin{equation}
\label{Lich-D_p^2}
D_p^2 = \Delta ^{\E_p} -p(2\omega_d +\tau) + \Psi_\mathcal{E},
\end{equation}

Let $\mu^E$, $\mu^{\mathrm{Bi}}$ and $\mu^{\E_p}$ be the moment maps induced by $\n^E$, $\n^{\mathrm{Bi}}$ and $\n^{\E_p}$ as in \eqref{def-muF}. Recall that $\mu$ is defined in \eqref{def-mu}. Then we have
\begin{equation}
\label{relations-mus}
\left\{
\begin{aligned}
&\mu^L = 2i\pi \mu, \\
&\mu^{\E_p}= 2i\pi p\mu+\mu^E +\mu^{\mathrm{Bi}}.
\end{aligned}\right.
\end{equation}

Assume now that $G$ acts freely on $P$, and recall that we then choose the $G$-invariant neighborhood $U$ of $P$  so that $G$ acts freely on its closure $\ol{U}$. Using  the procedure of Section \ref{Sect-connections} for $U\to U/G=B$ and $g^{TM}|_U$, we can define the operator $\Phi D_p^2 \Phi^{-1}$ induced by $D_p^2$ on $B$. Thanks to Theorem \ref{induced-operator-on-B} and \eqref{Lich-D_p^2}, we find that in the case of a free $G$-action on $P$,
\begin{equation}
\label{Lich-PhiDp2Phi^-1}
\Phi D_p^2 \Phi^{-1} = \Delta ^{\E_{p,B}} -p(2\omega_d +\tau) + \Psi_\mathcal{E} -  \langle \wt{\mu}^{\E_p},\wt{\mu}^{\E_p} \rangle_{g^{TY}} -\wh{h}^{-1} \Delta_B \wh{h}.
\end{equation}
Here, we have kept the same notation for an element in $\smooth(U,\End(\E_p))^G$ and the induced element in $\smooth(B,\End(\E_{p,B}))$, and we will always do this in the sequel.


\section{Localization near $P$}
\label{Sect-localization}

The goal of this section is to prove the localization of $P_Ge^{-\frac{u}{p}D_p^2}P_G$ near $P$, i.e., we prove Theorem \ref{thm-estimate-away}.

Let $\mathrm{inj}^M$ be the injectivity radius of $(M,g^{TM})$, and $\e \in ]0,\mathrm{inj}^M[$.

For  $x_0\in M$, we denote by $B^M(x_0,\e)$ and $B^{T_{x_0}M}(0,\e)$ the open balls in $M$ and $T_{x_0}M$ with center $x_0$ and $0$ and radius $\e$ respectively. If $\exp^M_{x_0}$ is the exponential map of $M$, then  $ Z\in B^{T_{x_0}M}(0,\e) \mapsto \exp^M_{x_0}(Z) \in B^M(x_0,\e)$ is a diffeomorphism, which gives local coordinates by identifying $T_{x_0}M$ with $\R^{2n}$ via an orthonormal basis $\{e_i \}$ of $T_{x_0}M$:

\begin{equation}
\label{R2n=TM}
(Z_1,\dots,Z_{2n}) \in \R^{2n} \mapsto \sum_i Z_i e_i \in T_{x}M.
\end{equation} 
From now on, we will always identify $ B^{T_{x_0}M}(0,\e)$ and $ B^M(x_0,\e)$.

Let $x_1,\dots x_N$ be points of $M$ such that $\{U_k = B^M(x_k,\e)\}_{k=1}^N$ is an open covering of $M$. On each $U_k$ we identify $E_Z$, $L_Z$ and $\Wedge(T_Z^*M)$ to $E_{x_k}$, $L_{x_k}$ and $\Wedge(T_{x_k}^*M)$ by parallel transport with respect to $\n^E$, $\n^L$ and $\n^{\mathrm{Bi}}$ along the geodesic ray $t \in [0,1]\mapsto tZ$. We fixe for each $k=1,\dots,N$ an orthonormal basis $\{e_i\}_i$ of $T_{x_k}M$ (without mentioning the dependence on $k$).

We denote by $\nabla_V$ the ordinary differentiation operator in the direction $V$ on $T_{x_k}M$.

Let $\{\varphi_k\}_k$ be a partition of unity subordinate to $\{U_k\}_k$. For $\ell \in \N$, we define a Sobolev norm $||\cdot||_{\sob^\ell(p)}$ on the $\ell$-th Sobolev space $\sob^\ell (M,\E_p)$ by
\begin{equation}
\label{def-norme-H^l(p)}
||s||^2_{\sob^\ell(p)} = \sum_k \sum_{j=0}^\ell \sum_{i_1,\dots,i_j=1}^d ||\n_{e_{i_1}}\dots \n_{e_{i_j}}(\varphi_k s)||^2_{L^2}.
\end{equation}

\begin{lemme}
\label{estimee-elliptique(p)}
For any  $m\in \N$, there exists $C_m>0$  such that for any $p\in \N^*$ and any $s\in \sob^{2m+2}(M,\E_p)$,
\begin{equation}
\label{estimee-elliptique(p)-eq}
||s||^2_{\sob^{2m+2}(p)}\leq C_m  p^{4m+4} \sum_{j=0}^{m+1}p^{-4j}||D_p^{2j}s||_{L^2}.
\end{equation}
\end{lemme}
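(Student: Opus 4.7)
The plan is to prove \eqref{estimee-elliptique(p)-eq} by induction on $m$, using the Lichnerowicz formula \eqref{Lich-D_p^2} to convert estimates for iterates of $D_p^2$ into elliptic estimates for the Bochner Laplacian $\Delta^{\E_p}$. The key observation is that the difference $D_p^2-\Delta^{\E_p}=-p(2\omega_d+\tau)+\Psi_{\mathcal{E}}$ is a zeroth-order operator of size $O(p)$, while the curvature $R^{\E_p}=pR^L+R^{\mathcal{E}}$ of $\E_p$ is also $O(p)$. These linear-in-$p$ scalings dictate the exponents appearing on the right-hand side of \eqref{estimee-elliptique(p)-eq}.

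For the base case $m=0$, the Bochner identity $\langle\Delta^{\E_p} s,s\rangle=\|\nabla^{\E_p} s\|_{L^2}^2$ combined with \eqref{Lich-D_p^2} and Young's inequality gives the first-derivative control $\|\nabla^{\E_p} s\|_{L^2}^2\leq C(\|D_p^2 s\|_{L^2}^2+p^2\|s\|_{L^2}^2)$. Applying the analogous identity to $\nabla^{\E_p} s$ and commuting $\Delta^{\E_p}$ past $\nabla^{\E_p}$ produces an estimate of the form
$$\|(\nabla^{\E_p})^2 s\|_{L^2}^2\leq C\,\|\Delta^{\E_p} s\|_{L^2}^2+C\,\|R^{\E_p}\,\nabla^{\E_p}s\|_{L^2}\|\nabla^{\E_p}s\|_{L^2}+(\text{l.o.t.});$$
since $R^{\E_p}=O(p)$, a further Young's inequality together with the first-derivative estimate just proved yields $\|s\|^2_{\sob^2(p)}\leq C(p^4\|s\|^2_{L^2}+\|D_p^2 s\|^2_{L^2})$, which is the case $m=0$.

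For the inductive step, I assume \eqref{estimee-elliptique(p)-eq} at rank $m-1$ and apply the base case on each chart $U_k$ to sections of the form $\nabla^{\alpha}(\varphi_k s)$ with $|\alpha|\leq 2m$. The commutator $[D_p^2,\nabla^{\alpha}]$ expands, via the Leibniz rule and repeated use of $R^{\E_p}=O(p)$, into a sum $\sum_{|\beta|\leq|\alpha|+1}p^{|\alpha|+1-|\beta|}\cdot(\text{bounded})\cdot\nabla^{\beta}s$; the lower-derivative terms are then re-expressed using the inductive hypothesis. After collecting everything, the weights line up precisely to produce $p^{4m+4}\sum_{j=0}^{m+1}p^{-4j}\|D_p^{2j}s\|^2_{L^2}$. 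Since the partition of unity $\{\varphi_k\}$ in \eqref{def-norme-H^l(p)} is $p$-independent, no extra $p$-factors appear from summing over the charts.

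The \emph{main obstacle} is precisely this $p$-bookkeeping: one must check that no commutator produces a term whose $p$-weight exceeds the envelope $p^{4m+4-4j}\|D_p^{2j}s\|^2_{L^2}$, which follows from the linear scaling $R^{\E_p}\sim pR^L$ combined with the fact that each commutation of $D_p^2$ with $\nabla^{\E_p}$ both drops the derivative order by one and pays exactly one factor of $p$.
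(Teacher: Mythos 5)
Your proposal follows the same strategy as the proof of \cite[Lem.~1.6.2]{ma-marinescu}, to which the paper defers: induction on $m$ using the Lichnerowicz formula \eqref{Lich-D_p^2} to pass between $D_p^2$ and $\Delta^{\E_p}$, the Bochner identity for the base case, and the linear-in-$p$ scaling of $R^{\E_p}$ (equivalently, of the connection form $\Gamma^{\E_p}$ in the local trivialization, which is what converts the chart-wise ordinary derivatives $\nabla_{e_i}$ in \eqref{def-norme-H^l(p)} into covariant ones) to track the $p$-weights in the commutators. Your description of $[D_p^2,\nabla^{\alpha}]$ as a sum of $\nabla^{\beta}$ with coefficients $O(p^{|\alpha|+1-|\beta|})$ is the correct bookkeeping, and the envelope $p^{4m+4-4j}$ indeed absorbs all of the resulting terms once one uses Young's inequality with $p$-dependent weights and interpolation to handle the odd-order Sobolev norms produced by $[D_p^2,\varphi_k]$ and the first-order part of $[D_p^2,\nabla^\alpha]$ — the two points your sketch glosses over but which go through as expected. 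This is essentially the same argument as the cited reference.
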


\begin{proof}
 This is proved in \cite[Lem. 1.6.2]{ma-marinescu}.
\end{proof}

Let $f\colon \R \to [0,1]$ be a smooth even function such that
\begin{equation}
\label{def-f}
f(t)=\left \{
\begin{aligned}
&1 \text{ for } |t|<\e/2, \\
& 0 \text{ for } |t|>\e.
\end{aligned}
\right.
\end{equation}

For $u>0$, $\varsigma\geq1$ and $a\in \C$, set
\begin{equation}
\label{defFuGuHu}
\begin{aligned}
&\F_u(a)=\int_\R e^{iv\sqrt{2}a}\exp(-v^2/2)f(v\sqrt{u})\frac{dv}{\sqrt{2\pi}}, \\
&\G_u(a)=\int_\R e^{iv\sqrt{2}a}\exp(-v^2/2)(1-f(v\sqrt{u}))\frac{dv}{\sqrt{2\pi}}.
\end{aligned}
\end{equation}

These functions are even holomorphic functions. Moreover, the restrictions of $\F_u$ and $\G_u$  to $\R$ lie in the Schwartz space $\mathcal{S}(\R)$, and
\begin{equation}
\label{liensFGH}
 \F_u(vD_p)+\G_u(vD_p)=\exp\left( -v^2D_p^2\right) \text{ for }v>0.
\end{equation}

Let $\G_u(vL_p)(x,x')$ be the smooth kernel of $\G_u(vL_p)$ with respect to $dv_M(x')$.
\begin{prop}
\label{the-pb-is-local}
For any $m\in \N$, $u_0>0$, $\e>0$, there exist  $C>0$ and $N\in \N$ such that for any $u> u_0$ and any $p\in \N^*$,
\begin{equation}
\label{the-pb-is-local-eq}
\left|  \G_{\frac{u}{p}} \left(\sqrt{u/p}D_p\right)(\cdot\, ,\cdot) \right|_{\mathscr{C}^m(M\times M)} \leq Cp^N  \exp \left( -\frac{\e^2p}{16u} \right).
\end{equation}
Here, the $\mathscr{C}^m$-norm is induced by $\n^L$, $\n^E$, $\n^{\mathrm{Bi}}$, $h^L$, $h^E$ and $g^{TM}$.
\end{prop}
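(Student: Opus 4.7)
This is the standard ``finite-propagation-speed + Gaussian concentration'' estimate of Ma--Marinescu \cite[Sect.~1.6]{ma-marinescu}; the $G$-action and the possible non-positivity of $L$ play no role here since we estimate $\G_{u/p}(\sqrt{u/p}D_p)$ itself, without any $P_G$. The heuristic is that in \eqref{defFuGuHu} the cutoff $1-f(v\sqrt{u/p})$ is supported where $|v|\geq \e/(2\sqrt{u/p})$, on which $e^{-v^2/2}\leq \exp(-\e^2 p/(16u))$; the work lies in transporting this bound on the scalar function $\G_{u/p}(\sqrt{u/p}\cdot)$ into a $\mathscr{C}^m$-bound on the Schwartz kernel of $T_p := \G_{u/p}(\sqrt{u/p}D_p)$.

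First I would reduce the kernel estimate to a functional-calculus estimate. Since $f$ is even, $\G_{u/p}$ is real-valued and even, so $T_p$ is self-adjoint and commutes with every power of $D_p$. A Sobolev embedding on $M\times M$ dominates $|T_p(\cdot,\cdot)|_{\mathscr{C}^m}$ by a finite number of $L^2\to L^2$ operator norms $\|Q_1 T_p Q_2\|$, where $Q_1,Q_2$ are differential operators of order $\leq m+\dim_\R M +1$ built from $\n^{\E_p}$. Iterating Lemma~\ref{estimee-elliptique(p)} in each variable, these are in turn dominated by
\begin{equation}
C\,p^{N_0}\max_{0\leq j,k\leq j_0}\|D_p^{2j}\,T_p\,D_p^{2k}\|_{L^2\to L^2},
\end{equation}
for some $N_0=N_0(m)$ and $j_0=j_0(m)$. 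By the spectral theorem, each such norm is bounded by $\sup_{a\in\R}|a|^{2(j+k)}\,|\G_{u/p}(\sqrt{u/p}a)|$.

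The heart of the proof is then the pointwise estimate
\begin{equation}
\label{key-est-plan}
\sup_{a\in\R}\,|a|^N\,|\G_s(\sqrt{s}a)|\ \leq\ C_N\,s^{-N/2}\exp\!\bigl(-\e^2/(16s)\bigr),\qquad s>0,
\end{equation}
obtained by integrating by parts $N$ times in $v$ in \eqref{defFuGuHu} via the identity $a^N e^{iv\sqrt{2}\sqrt{s}a}=(i\sqrt{2s})^{-N}\partial_v^N e^{iv\sqrt{2}\sqrt{s}a}$. The Leibniz expansion of $\partial_v^N\bigl[e^{-v^2/2}(1-f(v\sqrt{s}))\bigr]$ produces, for $0\leq \alpha\leq N$, terms of the shape $P_\alpha(v)\,e^{-v^2/2}\,s^{\alpha/2}\,g_\alpha(v\sqrt{s})$, where $P_\alpha$ is a polynomial of degree $\leq N$, $g_0=1-f$, and $g_\alpha=-f^{(\alpha)}$ for $\alpha\geq 1$; all of these are supported in $\{|v|\geq \e/(2\sqrt{s})\}$. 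Splitting $e^{-v^2/2}=e^{-v^2/4}\cdot e^{-v^2/4}$ and bounding the first factor by $e^{-\e^2/(16s)}$ on this support, while using the second factor to keep the $v$-integral uniformly finite, yields \eqref{key-est-plan}. Plugging in $s=u/p$ with $u\geq u_0$ converts $s^{-N/2}$ into $\leq u_0^{-N/2}p^{N/2}$, and combining with the previous step produces the claim with $N=N(m)$.

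The only real difficulty is the bookkeeping in the first step: one must verify that neither the Sobolev embedding on $M\times M$ nor the iteration of Lemma~\ref{estimee-elliptique(p)} in the two variables of the kernel introduces more than polynomial factors of $p$, so that the sharp exponent $\e^2 p/(16u)$ survives intact on the right-hand side. This is carried out verbatim as in \cite[Thm.~1.6.1]{ma-marinescu}.
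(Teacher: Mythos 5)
Your proposal reproduces, in outline, exactly the argument of Ma--Marinescu \cite[Prop.~1.6.4]{ma-marinescu}, which is what the paper cites for this statement: reduce the $\mathscr{C}^m$-kernel bound to $L^2$ operator norms of $D_p^{2j}\,\G_{u/p}(\sqrt{u/p}D_p)\,D_p^{2k}$ via Sobolev embedding on $M\times M$ and iteration of Lemma~\ref{estimee-elliptique(p)} in each variable, pass to the scalar functional-calculus supremum by self-adjointness, and establish the scalar bound by integrating by parts $N$ times in \eqref{defFuGuHu} and using the support condition $|v|\geq\e/(2\sqrt{s})$ together with $e^{-v^2/4}\leq e^{-\e^2/(16s)}$. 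So the route and the paper's intended proof coincide.

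One small inaccuracy in the write-up: the intermediate estimate \eqref{key-est-plan} is stated too strongly. After the Leibniz expansion, the $\alpha$-th term carries an explicit factor $s^{\alpha/2}$, so what the argument you describe actually produces is
\begin{equation*}
\sup_{a\in\R}|a|^N|\G_s(\sqrt{s}a)|\ \lesssim\ e^{-\e^2/(16s)}\sum_{\alpha=0}^N s^{(\alpha-N)/2}\ \lesssim\ \max\bigl(1,s^{-N/2}\bigr)\,e^{-\e^2/(16s)},
\end{equation*}
which coincides with your bound for $s\leq 1$ but is strictly weaker than $s^{-N/2}e^{-\e^2/(16s)}$ for $s>1$ (the $\alpha=N$ term alone already gives a constant, not $s^{-N/2}$). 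Since the Proposition allows arbitrary $u>u_0$, the regime $s=u/p>1$ does occur. This is harmless: plugging $s=u/p$ into the corrected bound, the case $s\leq1$ gives $u_0^{-N/2}p^{N/2}\exp(-\e^2p/(16u))$ and the case $s>1$ gives $\exp(-\e^2p/(16u))\leq p^{N/2}\exp(-\e^2p/(16u))$, so \eqref{the-pb-is-local-eq} follows in both cases with the polynomial $p^N$ absorbing the discrepancy. You should state the key estimate in the $\max(1,s^{-N/2})$ form, or restrict \eqref{key-est-plan} to $0<s\leq 1$ and handle $s>1$ separately, rather than claiming the $s^{-N/2}$ decay for all $s>0$.

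Everything else — the reduction to scalar bounds, the role of Lemma~\ref{estimee-elliptique(p)}, the observation that $P_G$ and the sign of $R^L$ play no role here — is correct and matches the cited proof.
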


\begin{proof}
This is proved in \cite[Prop. 1.6.4]{ma-marinescu}. 
\end{proof}

\begin{proof}[Proof of Theorem \ref{thm-estimate-away}]
As $0$ is a regular value of $\mu$, there is $\epsilon_0$ such that 
\begin{equation}
\mu \colon M_{2\epsilon_0}:=\mu^{-1}(B^{\g^*}(0,2\epsilon_0)) \to B^{\g^*}(0,2\epsilon_0)
\end{equation}
is a submersion. Note that $M_{2\epsilon_0}$ is an open $G$-invariant subset of $M$.

Fix $\e,\epsilon_0$ small enough so that $M_{2\epsilon_0}\subset U$ and $d^M(x,y)>4\e$ if $x\in M_{\epsilon_0}$ and $y\in M\setminus U$. We set $V_{\epsilon_0}=M\setminus M_{\epsilon_0}$, which is a smooth $G$-manifold with boundary $\partial V_{\epsilon_0}$. Then $M\setminus U \subset V_{\epsilon_0}$.

We denote by $D_{p,D}$ the operator $D_p$ acting on $V_{\epsilon_0}$ with the Dirichlet boundary condition. Then $D_{p,D}$ is self-adjoint.

By \cite[Sects. 2.6, 2.8]{MR1395148} and \cite[Append. D.2]{ma-marinescu}, we know that the wave operator $\cos(uD_{p,D})$ is well defined and its Schwartz kernel $\cos(uD_{p,D})(x,x')$ only depends on the restriction of $D_p$ to $G\cdot B^{M}(x,u)\cap V_{\epsilon_0}$ and vanish if $d^M(x,x')\geq u$. Thus, by \eqref{defFuGuHu}, 
\begin{equation}
\label{F(Dp)-et-F(DpD)}
\F_{\frac{u}{p}} \left(\sqrt{u/p}D_p\right)(x,x')=\F_{\frac{u}{p}} \left(\sqrt{u/p}D_{p,D}\right)(x,x') \qquad \text{if }x,x'\in M\setminus U.
\end{equation}

Let $s\in \smooth(M, \E_p)^G$ with $\supp(s)\subset \overset{\circ}{V}_{\epsilon_0}$. Since $D_p$ commutes with the $G$-action, we know that $D_ps\in \Omega^{0,\bullet}(M,L^p\otimes E)^G$. Moreover, from the Lichnerowicz formula \eqref{Lich-D_p^2}, we get
\begin{equation}
\label{<Dp2s,s>}
\langle D_p^2 s,s\rangle = \|\n^{\E_p}s\|_{L^2}^2 -p\langle(\omega_d+\tau)s,s\rangle +\langle \Psi_\mathcal{E} s,s\rangle.
\end{equation}
Observer that, as $s\in \Omega^{0,\bullet}(M,L^p\otimes E)^G$, \eqref{def-muF} gives
\begin{equation}
\n^{\E_p}_{K^M}s = (\lie_K + \mu^{\E_p}(K))s = \mu^{\E_p}(K)s,
\end{equation}
and thus by \eqref{relations-mus} and the fact that $\supp(s)\subset \overset{\circ}{V}_{\epsilon_0}$,
\begin{equation}
\label{norme-nablaEp-s1}
\begin{aligned}
\|\n^{\E_p}s\|_{L^2}^2 \geq &C\sum_i \|\n^{\E_p}_{K^M_i}s\|_{L^2}^2=C\sum_i \|\mu^{\E_p}(K_i)s\|_{L^2}^2 \\
&\geq Cp^2 \| \, |\mu|s\|_{L^2}^2 -C'\|s\|_{L^2}^2 \\
&\geq C\epsilon_0^2 p^2 \|s\|_{L^2}^2-C'\|s\|_{L^2}^2.
\end{aligned}
\end{equation}

Thanks to \eqref{<Dp2s,s>} and \eqref{norme-nablaEp-s1}, we have
\begin{equation}
\langle D_p^2 s,s\rangle \geq Cp^2 \|s\|_{L^2}^2.
\end{equation}
In particular, as $P_G$ preserve the Dirichlet boundary condition, there are $C,C'>0$ such that for $p\geq1$,
\begin{equation}
\label{sp-DpD2-G-inv}
\Sp(P_GD_{p,D}^2P_G)\subset [Cp^2,+\infty[\,.
\end{equation}

By the elliptic estimate for the Laplacian with Dirichlet boundary condition \cite[Thm. 5.1.3]{MR1395148}, we can see that the proof of Lemma \ref{estimee-elliptique(p)} (see \cite[Lem. 1.6.2]{ma-marinescu}) still works if we replace therein $D_p$ by $D_{p,D}$ and take $s\in \sob^{2m+2}(M,\E_p)\cap\sob^1_0(M,\E_p)$. Using this modification of Lemma \ref{estimee-elliptique(p)}, \eqref{sp-DpD2-G-inv} and
\begin{equation}
\sup_{a\geq Cp} |a^m\F_{\frac{u}{p}}(a\sqrt{u/p})| \leq C_{m,k,u} p^{-k},
\end{equation}
 we find  that for any $Q,Q$ differential operators of order $2m,2m'$ with scalar principal symbol and with support in $U_i,U_j$ and for any $k\in \N$
\begin{equation}
\label{estimationPG(LpD)Q}
\left\| Q \F_{\frac{u}{p}} \Big(\sqrt{u/p}D_{p,D}\Big)Q' s \right \|_{L^2} \leq C_{m,m',u} p^{-k} \|s\|_{L^2}.
\end{equation}
Thus, using Sobolev inequalities with \eqref{estimationPG(LpD)Q}, and  \eqref{liensFGH}, \eqref{the-pb-is-local-eq} and \eqref{F(Dp)-et-F(DpD)}, we get Theorem \ref{thm-estimate-away}.
\end{proof}

\section{Asymptotic of the heat kernel near $P$ for a free action}
\label{Sect-AD-heat-kernel}

We assume in this Section that $G$ acts freely on $P$ and $\ol{U}$.

In this section, we prove Theorem \ref{thm-limit-near}. In Section \ref{Sect-rescaling}, we work near $P$ and replace our geometric setting by a model setting, in which $M$ is replaced by $G\times \R^{2n-d}$, $P$ by $G\times \R^{2n-2d}\times \{0\}$ and the different bundles are trivial. We can then define a rescaled version of $\frac{1}{p}D_p^2$, and in Section \ref{Sect-convergence}, we prove the convergence of the heat kernel of the rescaled operator. In Section \ref{Sect-calcul} we compute the limiting heat kernel to finish the proof of Theorem \ref{thm-limit-near}.

\subsection{Rescaling the operator $\Phi D_p^2\Phi^{-1}$}
\label{Sect-rescaling}

 This section is analogous to \cite[Sect. 2.6]{ma-zhang}, with the necessary changes made.

We fix $x_0\in M_G$ and  $\e \in ]0,\mathrm{inj}^M/4[$.

Recall that we have the following diagram:
$$  \xymatrix{
 P=\mu^{-1}(0) \ar@{^{(}->}[r] \ar[d]^{G} & U \ar[d]^{G}  \\
 M_G \ar@{^{(}->}[r]& B
 }$$

Recall also that $g^{T^HP}$ is a $G$-invariant and $J$-invariant metric on $T^H P$, $g^{TY}$ is a $G$-invariant metric on $TY$ and  $g^{JTY}$ is the $G$-invariant metric on $JTY$ induced by $J$ and $g^{TY}$. Then by \eqref{decompo-TU|P}, we can chose  be a $G$-invariant metric $g^{TM}$ on $M$ such that on $P$:
\begin{equation}
g^{TM}|_P= g^{TY}|_P \oplus g^{JTY}|_P \oplus g^{T^HP}.
\end{equation}

Let $g^{T^HU}$  be the restriction of $g^{TM}$ on $T^HU$. Let $g^{TB}$ (resp. $g^{TM_G}$) be the metric on $TB$ (resp. $TM_G$) induced by $g^{T^HU}$ (resp. $g^{T^HP}$).

By \eqref{TP=TY+THP} and Lemma \ref{TY^perp=JTP}, we know that 
\begin{equation}
T^HU|_P=JTY|_P\oplus JT^HP = JTY|_P\oplus T^HP.
\end{equation}
 As a consequence, if $N_G$ denotes the normal bundle of $M_G$ in $B$, then $N_G$ can be identified as
\begin{equation}
\label{id-NG}
N_G\simeq(TM_G)^{\perp_{g^{TB}}}= (JTY)_B|_{M_G},
\end{equation}
where $(JTY)_B$ denotes the bundle over $B$ induced by $JTY$.

Let $\n^{TB}$ be the Levi-Civita connection on $(TB,g^{TB})$. Let $P^{N_G}$ and $P^{TM_G}$ be the orthogonal projections from $TB|_{M_G}$ to $N_G$ and $TM_G$ respectively. Set
\begin{equation}
\label{connection-sur-B}
\begin{aligned}
& \n^{N_G} = P^{N_G}(\n^{TB}|_{M_G})P^{N_G}, &&\n^{TM_G} = P^{TM_G}(\n^{TB}|_{M_G})P^{TM_G}, \\
&{}^0\n^{TB} = \n^{N_G}\oplus \n^{TM_G}, && A=\n^{TB}|_{M_G}-{}^0\n^{TB}.
\end{aligned}
\end{equation}

For $W\in T_{x_0}M_G$, let $u\in \R \mapsto x_u = \exp_{x_0}^{M_G}(uW) \in M_G$ be the geodesic in $M_G$ starting at $x_0$ with speed $W$. If $|W|\leq 4\e$ and $V\in N_{G,x_0}$, let $\tau_W V$ be the parallel transport of $V$ with respect to $\n^{N_G}$ along to curve $u\in [0,1] \mapsto x_u = \exp_{x_0}^{M_G}(uW)$.

If $Z\in T_{x_0}B$, we decompose $Z$ as $Z=Z^0+Z^\perp$ with $Z^0\in T_{x_0}M_G$ and $Z^\perp\in N_{G,x_0}$, and if $|Z^0|,|Z^\perp|\leq \e$ we identify $Z$ with $\exp^B_{\exp_{x_0}^{M_G}(Z^0)}(\tau_{Z^0}Z^\perp)$. This gives a diffeomorphism 
\begin{equation}
\mathcal{F} \colon B^{T_{x_0}M_G}(0,4\e)\times B^{N_{G,x_0}}(0,4\e) \isom \mathscr{U}(x_0) \subset B,
\end{equation}
where $\mathscr{U}(x_0)$ is an open neighborhood of $x_0$ in $B$. Note that under this diffeomorphism, $\mathscr{U}(x_0)\cap M_G$ is identified with $(B^{T_{x_0}M_G}(0,4\e)\times\{0\})$.

In the sequel, we will  indifferently write $B^{T_{x_0}M_G}(0,4\e)\times B^{N_{G,x_0}}(0,4\e)$ or $\mathscr{U}(x_0)$, $x_0$ or 0, etc...

We identify $(L_B)_Z$, $(E_B)_Z$ and $(\E_p)_{B,Z}$ with $(L_B)_{x_0}$, $(E_B)_{x_0}$ and $(\E_p)_{B,x_0}$ by using parallel transport with respect to $\n^{L_B}$, $\n^{E_B}$ and $\n^{(\E_p)_B}$ along the curve $u\in [0,1] \mapsto \gamma_u=uZ$.

Fix $y_0\in \pi^{-1}(x_0)$. We define $\wt{\gamma}\colon [0,1] \to M$ to be the curve lifting $\gamma$ such that $\derpar{\wt{\gamma}_u}{u}\in T^H_{\wt{\gamma}_u}U$. As above, on $\pi^{-1}(B^{T_{x_0}B}(0,4\e))$, we can trivialize $L$, $E$ and $\E_p$ using the parallel transport along $\wt{\gamma}$ with respect to the corresponding connections. By \eqref{def-connection-down}, the previous trivialization are naturally induced by this one.

This also gives a diffeomorphism
\begin{equation}
\pi^{-1}(B^{T_{x_0}B}(0,4\e)) \simeq G\times B^{T_{x_0}B}(0,4\e),
\end{equation}
and the induced $G$-action on $G\times B^{T_{x_0}B}(0,\e)$ is then 
\begin{equation}
\label{action-triviale}
g.(g',Z) = (gg',Z).
\end{equation}

Let $\{e_i^0\}$ and $\{e_i^\perp\}$ be orthonormal basis of $T_{x_0}M_G$ and $N_{G,x_0}$ respectively. Then $\{e_i\}=\{e_i^0,e_i^\perp\}$ is an orthonormal basis of $T_{x_0}B$. Let $\{e^i\}$ be its dual basis. We will also denote $\mathcal{F}_*(e_i^0)$, $\mathcal{F}_*(e_i^\perp)$ by $\{e_i^0\}$, $\{e_i^\perp\}$, so that in our coordinates,
\begin{equation}
\derpar{}{Z_i^0} = e_i^0 \:, \quad \derpar{}{Z_i^\perp} = e_i^\perp.
\end{equation}

In what follows, we will extend the geometric object from $B^{T_{x_0}B}(0,4\e)$ to $\R^{2n-d}\simeq T_{x_0}B$ (here the identification is similar to \eqref{R2n=TM}) to get analogue geometric structures on $G\times \R^{2n-d}$ as on $M$, an thus work on 
\begin{equation}
M_0:=G\times \R^{2n-d}
\end{equation}
 instead of $M$.

Let $L_0$ be the trivial bundle $L|_{G.y_0}$ lifted on $M_0$. We still denote by $\n^L$, $h^L$ the connection and metric on $L_0$ over $\pi^{-1}(B^{T_{x_0}B}(0,4\e))$ induced by the above identification. Then $h^L$ is identified with the constant metric $h^{L_0}=h^{L_{y_0}}$. We use similar notations for the bundle $E$.

Let $\varphi \colon \R \to [0,1]$ be a smooth even function such that
\begin{equation}
\label{def-varphi}
\varphi(v)=\left \{
\begin{aligned}
&1 \text{ for } |v|<2, \\
& 0 \text{ for } |v|>4.
\end{aligned}
\right.
\end{equation}
Let $\varphi_\e\colon M_0\to M_0$ defined by
\begin{equation}
\varphi_\e(g,Z) = (g,\varphi(|Z|/\e)Z).
\end{equation}

Let $\n^{E_0} = \varphi_\e^*\n^E$. Then $\n^{E_0}$ is an extension of $\n^E$ outside $\pi^{-1}(B^{T_{x_0}B}(0,4\e))$.

Let $P^{TY}$ be the orthogonal projection from $TM$ onto $TY$. For $W\in TB$, let $W^H\in T^HU$ be the horizontal lift of $W$. Then we can define the Hermitian connection $\n^{L_0}$   on $(L_0,h^{L_0})\to G\times \R^{2n-d}$   by
\begin{equation}
\label{def-nablaL0}
\n^{L_0} = \varphi_\e^*\n^L + \big(1-\varphi(|Z|/\e)\big)R^L_{y_0}(Z^H,P^{TY}_{y_0}\cdot).
\end{equation}

We can compute directly the curvature $R^{L_0}$ of $\n^{L_0}$: if we denote $(1,Z)$ just by $Z$, then
\begin{equation}
\begin{aligned}
R^{L_0}_Z= &R^L_{\varphi_\e(Z)}(P^{TY}_{y_0}\cdot,P^{TY}_{y_0}\cdot) +R^L_{y_0}(P^{T^HU}_{y_0}\cdot,P^{TY}_{y_0}\cdot)  \\ 
&+ \varphi^2(|Z|/\e) R^L_{\varphi_\e(Z)}(P^{T^HU}_{y_0}\cdot,P^{T^HU}_{y_0}\cdot)\\
&+\varphi(|Z|/\e)\big[ R^L_{\varphi_\e(Z)}-R^L_{y_0} \big](P^{T^HU}_{y_0}\cdot,P^{TY}_{y_0}\cdot)\\
&+\varphi'(|Z|/\e)\frac{Z^*}{\e|Z|}\wedge \big[ R^L_{\varphi_\e(Z)}-R^L_{y_0} \big](Z^H,P^{TY}_{y_0}\cdot)\\
&+(\varphi \varphi')(|Z|/\e)\frac{Z^*}{\e|Z|}\wedge R^L_{\varphi_\e(Z)}(Z^H,P^{T^HU}_{y_0}\cdot),
\end{aligned}
\end{equation}
where $Z^*\in T^*_{x_0}B$ is the dual of $Z\in T_{x_0}B$ with respect to the metric $g^{TB}_{x_0}$.

The group $G$ acts naturally on $M_0$ by \eqref{action-triviale} and under our identifications, the action of $G$ on $L$, $E$ on $G\times ^{T_{x_0}B}(0,\e)$ is exactly the $G$-action on $L|_{G.y_0}$, $E|_{G.y_0}$. 

We define a $G$-action on $L_0$, $E_0$ by the action of $G$ on $G.y_0$. Then it extends the $G$-action on $L$, $E$ on $G\times ^{T_{x_0}B}(0,\e)$ to $M_0$.

By Lemma \ref{TY^perp=JTP}, we know that 
\begin{equation}
\label{RL(1,Z0)}
R^L_{(1,Z^0)}(Z^H,K^M) = R^L_{(1,Z^0)}\big((Z^\perp)^H,K^M\big).
\end{equation}

For $(1,Z)\in G\times \R^{2n-d}$, \eqref{action-triviale} gives $\varphi_{\e*}K^{M_0}_{(1,Z)} = K^M_{y_0}$ for $K\in \g$. Thus, by \eqref{def-mu}, \eqref{def-nablaL0} and \eqref{RL(1,Z0)}, the moment map $\mu_0\colon M_0 \to \g^*$ of the $G$-ac( $(M_0,L_0)$ is given by
\begin{equation}
\label{muM0}
 \mu_0(K)_{(1,Z)} =  \mu(K)_{\varphi_\e(1,Z)}+ \frac{1}{2i\pi} \big(1-\varphi(|Z|/\e)\big)R^L_{y_0}((Z^\perp)^H,K^M_{y_0}).
\end{equation}
Now, from the construction of our coordinate, we have $\mu_0 =0$ on $G\times \R^{2n-2d}\times \{0\}$. Moreover,
\begin{equation}
\label{mu(K)}
\mu(K)_{\varphi_\e(1,Z)} = \frac{1}{2i\pi}R^L_{(1,Z)}\big( \varphi(|Z|/\e)(Z^\perp)^H,K^M\big) +O\big(\varphi(|Z|/\e)|Z||Z^\perp|\big).
\end{equation}
Thus, from our construction, Lemma \ref{bL-ND} and \eqref{id-NG}, \eqref{muM0} and \eqref{mu(K)}, we know that 
\begin{equation}
\mu_0^{-1}(0)=G\times \R^{2n-2d}\times \{0\}.
\end{equation}

Let 
\begin{equation}
\label{gTM0-et-J0}
g^{TM_0}(g,Z)=g^{TM}(\varphi_\e(g,Z)) \quad\text{and} \quad J_0(g,Z)=J(\varphi_\e(g,Z))
\end{equation}
 be the metric and almost-complex structure on $M_0$.  Let $T^{*(0,1)}M_0$ be the anti-holomorphic cotangent bundle of $(M_0,J_0)$. Since $J_0(g,Z)=J(\varphi_\e(g,Z))$, $T^{*(0,1)}_{(g,Z),J_0} M_0$ is naturally identified with $T^{*(0,1)}_{\varphi_\e(g,Z),J}M_0$.

  We can now construct all the objects corresponding to those of  Section \ref{Sect-operator} in this new setting and denotes them by adding subscripts 0, e.g. $\E_{0,p}$, $\n^{\det_0}$, $\n^{\mathrm{Cl}_0}$, $\n^{\mathrm{Bi}_0}$, $\n^{\E_{0,p}}$, ... Then, we can define the Dirac operator $D^{M_0}_p$ on $M_0$, which satisfies
\begin{equation}
\label{def-LM0p}
D_p^{M_0,2} = \Delta ^{\E_{0,p}} -p(2\omega_{d,0} +\tau_0) + \Psi_{\mathcal{E}_0}.
\end{equation}
By \eqref{Lich-D_p^2} and the above constructions, we know that $D_p^2$ and $D_p^{M_0,2}$ coincide on $\pi^{-1}(B^{T_{x_0}B}(0,2\e))$.

 We  can identify $\Wedge(T^*_{(g,Z)}M_0)$ with $\Wedge(T^*_{gy_0}M)$ by identifying first $\Wedge(T^*_{(g,Z)}M_0)$ with $\Wedge(T^*_{\varphi_\e(g,Z),J}M)$ and then identifying $\Wedge(T^*_{\varphi_\e(g,Z),J}M)$ with $\Wedge(T^*_{gy_0}M)$ by parallel transport with respect to $\n^{\mathrm{Bi}_0}$ (see \eqref{def-nablaB}) along $u\in [0,1] \mapsto (g,u\varphi(|Z|/\e)Z)$. We also trivialize $\det(T^{(1,0)}M_0)$ in this way using $\n^{\det_0}$.

Let $g^{TB_0}$ be the metric on $B_0:=\R^{2n-d}$ induced by $g^{TM_0}$, and let $dv_{B_0}$ by the corresponding Riemannian volume. We denote by $TY_0$ the relative tangent bundle of the fibration $M_0\to B_0$, and by $g^{TY_0}$ the metric on $TY_0$ induced by $g^{TM_0}$.

The operator $\Phi D_p^{M_0,2}\Phi^{-1}$ is also well-defined on $T_{x_0}B \simeq \R^{2n-d}$. More precisely, it is an operator on the bundle $(\E_{0,p})_{B_0}$ over $B_0$ induced by $\E_{0,p}$ , and by \eqref{Lich-PhiDp2Phi^-1}, it is given by
\begin{equation}
\label{Lich-PhiL_pM0Phi^-1}
\Phi D_p^{M_0,2} \Phi^{-1} = \Delta ^{(\E_{0,p})_{B_0}} -p(2\omega_{0,d} +\tau_0) + \Psi_{\mathcal{E}_0} -  \langle \wt{\mu}^{\E_{0,p}},\wt{\mu}^{\E_{0,p}} \rangle_{g^{TY_0}} -\frac{1}{h_0} \Delta_{B_0} h_0.
\end{equation}

Let $\exp(-uD_p^{M_0,2})(Z,Z')$  be the smooth heat kernel of $D_p^{M_0,2}$ with respect to $dv_{M_0}(Z')$.
\begin{lemme}
\label{noyau-Lp-et-LpM0}
Under notation of Proposition \ref{the-pb-is-local} and the above trivializations, the following estimate holds uniformly on $v = (g,Z),v' = (g',Z') \in G\times B^{T_{x_0}B}(0,\e)$:
\begin{equation}
\left| e^{-\frac{u}{p}D_p^2}(v,v') -  e^{-\frac{u}{p}D^{M_0,2}_p}\big((g,Z),(g',Z') \big)\right| \leq Cp^N \exp\big(-\frac{\e^2p}{16u}\big).
\end{equation}
\end{lemme}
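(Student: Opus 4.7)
The plan is to combine the decomposition \eqref{liensFGH} with the finite propagation speed of the wave equation. Setting $v=\sqrt{u/p}$ in \eqref{liensFGH} gives
\begin{equation*}
e^{-\frac{u}{p}D_p^2} = \F_{u/p}\bigl(\sqrt{u/p}\,D_p\bigr) + \G_{u/p}\bigl(\sqrt{u/p}\,D_p\bigr),
\end{equation*}
and analogously on $M_0$ with $D_p$ replaced by $D_p^{M_0}$. It thus suffices to show (a) the two $\F$-terms have the same Schwartz kernel on $G\times B^{T_{x_0}B}(0,\e)$, and (b) each $\G$-term is bounded by $Cp^N\exp(-\e^2 p/(16u))$ in the $\mathscr{C}^0$-norm.

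For (a), after the substitution $w=v\sqrt{u/p}$ in the definition \eqref{defFuGuHu} of $\F_{u/p}$, one expresses $\F_{u/p}(\sqrt{u/p}D_p)$ as a Fourier-type superposition of the wave operators $e^{iw\sqrt{2}D_p}$, with the integration restricted by the support of $f$ to $|w|\le \e$. By the finite propagation speed of these wave operators (valid on $M$ and also on $M_0$ thanks to the completeness of $g^{TM_0}$), the kernel at $(v,v')$ depends only on the restriction of $D_p$ to a $\sqrt{2}\e$-neighborhood of $v$. Shrinking $\e$ if necessary so that this neighborhood lies inside $\pi^{-1}(B^{T_{x_0}B}(0,2\e))$, the Lichnerowicz identities \eqref{Lich-D_p^2} and \eqref{def-LM0p}, together with the explicit construction of $L_0$, $E_0$, $g^{TM_0}$ and $J_0$ via $\varphi_\e$ in \eqref{def-nablaL0}--\eqref{gTM0-et-J0}, guarantee that $D_p^2=D_p^{M_0,2}$ in that region. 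Hence the two $\F$-kernels agree pointwise on $G\times B^{T_{x_0}B}(0,\e)$.

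For (b), the bound on $D_p$ is exactly Proposition \ref{the-pb-is-local}. For $D_p^{M_0}$ we reproduce the argument of \cite[Prop. 1.6.4]{ma-marinescu}: the $\mathcal{S}(\R)$-decay of $\G_{u/p}$ combined with a Sobolev estimate analogous to Lemma \ref{estimee-elliptique(p)} yields the identical bound. This extends from $M$ to the non-compact $M_0$ because $D_p^{M_0}$ is essentially self-adjoint (completeness of $g^{TM_0}$) and the connections on $L_0,E_0,\E_{0,p}$ constructed in Section \ref{Sect-rescaling} have coefficients that, while possibly large at infinity in $Z$-direction, are uniformly controlled on any fixed compact set and admit uniform $p$-dependent Sobolev estimates of the same form as \eqref{estimee-elliptique(p)-eq}.

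The main obstacle is precisely this transfer of the elliptic and wave machinery from $M$ to the non-compact model $M_0$: one must check essential self-adjointness, unit propagation speed, and uniform Sobolev control of $D_p^{M_0}$, all in a form compatible with the $p$-scaling. This is exactly what the cutoff construction via $\varphi_\e$ is designed to ensure, and once granted, subtracting the two decompositions and combining (a) and (b) yields the claimed estimate.
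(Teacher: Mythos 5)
Your proof takes essentially the same approach as the paper: decompose $e^{-\frac{u}{p}D_p^2}$ via \eqref{liensFGH}, use finite propagation speed of the wave equation plus the fact that $D_p^2 = D_p^{M_0,2}$ near $P$ to identify the $\F$-kernels, and bound the $\G$-terms by Proposition \ref{the-pb-is-local} (for $D_p$) and its verbatim analogue for $D_p^{M_0}$, justified by $D_p^{M_0,2}$ having the same Lichnerowicz structure \eqref{def-LM0p}. The paper states the transfer to $M_0$ more tersely (simply ``$D_p^{M_0,2}$ has the same structure as $D_p^2$, thus Lemma \ref{estimee-elliptique(p)} and Proposition \ref{the-pb-is-local} are still true''), while you flag explicitly the non-compactness, essential self-adjointness, and uniform Sobolev issues that make this transfer work — a reasonable elaboration rather than a genuinely different route.
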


\begin{proof}
By \eqref{def-LM0p}, $D_p^{M_0,2}$ has the same structure as $D_p^2$. Thus Lemma \ref{estimee-elliptique(p)} and Proposition \ref{the-pb-is-local} are still true if we replace $D_p^2$  therein by $D_p^{M_0,2}$. Moreover, as $D_p^{M_0,2}$ and $D_p^2$ coincide for $|Z|$ small, by the finite propagation speed of the wave equation (see e.g. \cite[Thm. D.2.1]{ma-marinescu}), we know that
\begin{equation}
\F_{\frac{u}{p}} \left( \sqrt{u/p}D_p \right)(v,\cdot) = \F_{\frac{u}{p}} \left( \sqrt{u/p}D_p^{M_0} \right)\big((g,Z),\cdot \big)
\end{equation}
if $v=(g,Z)$ under the above trivializations. Thus, we get our Lemma by \eqref{liensFGH}.
\end{proof}

We still denote  $P_G$  the orthogonal projection from $\Omega^{0,\bullet}(U,L^p\otimes E)$ onto $\Omega^{0,\bullet}(U,L^p\otimes E)^G$. Let $dg$ be the Haar measure on $G$. Then we have 
\begin{equation}
\label{chaleur-sur-invariant}
\big(P_Ge^{-\frac{u}{p}D_p^2}P_G\big)(v,v') = \int_{G\times G} (g,g'^{-1})\cdot e^{-\frac{u}{p}D_p}(g^{-1}v,g'v')dgdg'.
\end{equation}
If we again denote by  $P_G$  the orthogonal projection from $\Omega^{0,\bullet}(M_0,L_0^p\otimes E_0)$ onto $\Omega^{0,\bullet}(M_0,L_0^p\otimes E_0)^G$, then we have a similar formula as \eqref{chaleur-sur-invariant}  for $\big(P_Ge^{-\frac{u}{p}D^{M_0,2}_p}P_G\big)$. Thus, as $G$ preserve every metrics and connections, Lemma \ref{noyau-Lp-et-LpM0} implies
\begin{cor}
 \label{noyau-Lp-et-LpM0-sur-invariant}
Under notation of Proposition \ref{the-pb-is-local}, the following estimate holds uniformly on $v = (g,Z),v' = (g',Z') \in G\times B^{T_{x_0}B}(0,\e)$:
\begin{equation}
\left| \big(P_Ge^{-\frac{u}{p}D_p^2}P_G\big)(v,v) -  \big(P_Ge^{-\frac{u}{p}D_p^{M_0,2}}P_G\big)\big((g,Z),(g',Z') \big)\right| \leq Cp^N \exp\big(-\frac{\e^2p}{16u}\big).
\end{equation}
\end{cor}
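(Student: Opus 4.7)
The whole point is that the invariant heat kernel admits the integral representation \eqref{chaleur-sur-invariant}, so the corollary reduces to the pointwise estimate of Lemma \ref{noyau-Lp-et-LpM0} plus a uniformity check along the $G$-orbits. I would proceed as follows.

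\textbf{Step 1: Reduce to a pointwise difference.} Formula \eqref{chaleur-sur-invariant} applies verbatim on $M_0$ as well, so subtracting the two representations gives
\begin{equation*}
\bigl(P_G e^{-\frac{u}{p}D_p^2}P_G\bigr)(v,v') - \bigl(P_G e^{-\frac{u}{p}D_p^{M_0,2}}P_G\bigr)(v,v')
= \int_{G\times G} (g,g'^{-1})\cdot \Delta_p(g^{-1}v,g'v')\, dg\, dg',
\end{equation*}
where $\Delta_p(w,w'):=e^{-\frac{u}{p}D_p^2}(w,w') - e^{-\frac{u}{p}D_p^{M_0,2}}(w,w')$. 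It therefore suffices to bound the integrand uniformly in $g,g'$.

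\textbf{Step 2: Check that the translates stay in the chart.} Under the trivialization $\pi^{-1}(B^{T_{x_0}B}(0,4\e))\simeq G\times B^{T_{x_0}B}(0,4\e)$ and the model action \eqref{action-triviale}, the left $G$-action is multiplication on the first factor only; the $Z$-coordinate is fixed. Hence if $v=(g_0,Z)$, $v'=(g'_0,Z')$ with $|Z|,|Z'|<\e$, then $g^{-1}v=(g^{-1}g_0,Z)$ and $g'v'=(g'g'_0,Z')$ still lie in $G\times B^{T_{x_0}B}(0,\e)$ for every $g,g'\in G$.

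\textbf{Step 3: Apply the pointwise estimate.} Lemma \ref{noyau-Lp-et-LpM0} gives constants $C,N$ (depending on $u,\e$ but not on the point) such that
\begin{equation*}
|\Delta_p(g^{-1}v,g'v')|\leq Cp^N\exp\!\Bigl(-\frac{\e^2 p}{16u}\Bigr)
\end{equation*}
uniformly in $(g,g')\in G\times G$, by Step 2.

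\textbf{Step 4: Integrate over the compact group.} All the metrics, connections, and volume forms are $G$-invariant by assumption, so the action of $(g,g'^{-1})$ on the bundle fibres is an isometry and preserves the pointwise norm. Integrating the bound of Step 3 over $G\times G$ (of finite Haar measure, since $G$ is compact) yields a bound of the form $C'p^N\exp(-\e^2 p/(16u))$, absorbing the factor $\vol(G)^2$ into the constant.

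\textbf{Expected obstacle.} There is essentially no hard analytic work: Lemma \ref{noyau-Lp-et-LpM0} does all the heavy lifting, and finite propagation plus Sobolev estimates have already been used there. The only point requiring care is the uniformity of the estimate under $G$-translation, which in turn rests on the simple description \eqref{action-triviale} of the $G$-action in the model chart together with $G$-invariance of all the geometric data. Once that is observed, the proof is a direct application of Fubini and the Lemma.
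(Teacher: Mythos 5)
Your proof is correct and follows essentially the same route as the paper: both use the integral representation \eqref{chaleur-sur-invariant} (and its analogue on $M_0$), the observation that the $G$-action preserves all the geometric data, and then Lemma \ref{noyau-Lp-et-LpM0} to control the integrand. Your version simply spells out the uniformity check (that $G$-translation preserves the $Z$-coordinate under \eqref{action-triviale}, so the pointwise bound of Lemma \ref{noyau-Lp-et-LpM0} applies to all translates) which the paper compresses into a single phrase.
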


Let $S_L$ be a $G$-invariant unit section of $L|_{Gy_0}$. Let $\mathrm{pr}$ be the projection $G\times \R^{2n-d}\to G$. Using $S_L$ and the above discussion, we get two isometries 
\begin{equation}
\E_{0,p} = \Wedge(T^*M_0)\otimes E_0 \otimes L_0^p \simeq \mathrm{pr}^*(\mathcal{E}|_{Gy_0}) \quad \text{and}\quad (\E_{0,p})_{B_0}\simeq \mathcal{E}_{B,x_0}.
\end{equation}
Thus, $\Phi D_p^{M_0,2}\Phi^{-1}$ can be seen as an operator on $\mathcal{E}_{B,x_0}$. Note that our formulas will not depend on the choice of $S_L$ as the isomorphism $\End((\E_{0,p})_{B_0})\simeq \End(\mathcal{E}_{B,x_0})$ is canonical.

Let $dv_{TB}$ be the Riemannian volume of $(T_{x_0}B,g^{TB})$. Recall that $\kappa$ is the smooth positive function defined by
\begin{equation}
\label{def-kappa}
dv_{B_0} (Z) = \kappa(Z) dv_{TB}(Z)=\kappa(Z) dv_{M_G}(x_0)dv_{N_{G,x_0}},
\end{equation}
with $\kappa(0)=1$.

As in \eqref{formule-RFB}, we denote by $R^{L_B}$, $R^{E_B}$ and $R^{\mathrm{Bi}_B}$ the curvature on $L_B$, $E_B$ and $\big(\Wedge(T^*M)\big)_B$ induced by $\n^L$, $\n^E$ and $\n^{\mathrm{Bi}}$ on $M$.

As in \eqref{def-mutildeF}, $\wt{\mu}\in TY$, $\wt{\mu}^E\in TY\otimes \End(E)$ and $\wt{\mu}^{\mathrm{Bi}}\in TY\otimes \End(\Wedge(T^*M))$ are the sections induced by 
$\mu$, $\mu^E$ and $\mu^{\mathrm{Bi}}$ in \eqref{def-muF} and \eqref{relations-mus}.

We denote by $\n_V$ the ordinary differentiation operator on $T_{x_0}B=B_0$ in the direction $V$.

We will now make the change of parameter $t=\frac{1}{\sqrt{p}}\in\, ]0,1]$. 

\begin{defn}
For $s\in \smooth(\R^{2n-d}, \mathcal{E}_{B,x_0})$ and $Z\in\R^{2n-d}$ set
\begin{equation}
\label{def-rescaled}
\begin{aligned}
&(S_ts)(Z)  = s(Z/t), \\
&\n_t = tS_t^{-1} \kappa^{1/2}  \nabla^{(\E_{0,p})_{B_0}}\kappa^{-1/2} S_t, \\
& \n_0= \nabla + \frac{1}{2} R^{L_B}_{x_0}( Z, \cdot) , \\
&\LL_{t}=   t^2 S_t^{-1}  \kappa^{1/2}\Phi D_p^{M_0,2}\Phi^{-1}  \kappa^{-1/2} S_t, \\
&\LL_{0} = - \frac{1}{2}\sum_{i=1}^{2n-d} \left(\n_{0,e_i}\right)^2-2\omega_{d,x_0}-\tau_{x_0} +4\pi^2|P^{TY}\boldrm{J}_{x_0}Z|^2.
\end{aligned}
\end{equation}
\end{defn}	

\begin{prop}
\label{asymp-Lt}
When $t\to 0$, we have
\begin{equation}
\label{asymp-Lt-eq}
\n_{t,e_i} = \n_{0,e_i} + O(t) \text{ and} \quad \LL_{t} = \LL_0 +O(t).
\end{equation}
\end{prop}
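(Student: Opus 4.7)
The plan is to apply the Lichnerowicz formula \eqref{Lich-PhiL_pM0Phi^-1} and expand each summand to first order in the rescaled variable $tZ$, where $t=1/\sqrt{p}$. The two basic scaling identities I would use repeatedly are
\begin{equation*}
t\,S_t^{-1}\n_{e_i}S_t = \n_{e_i},\qquad S_t^{-1}f(Z)S_t = f(tZ),
\end{equation*}
valid for a constant vector field $e_i$ on $\R^{2n-d}$ and any multiplication operator $f$. Together, they imply that a connection of the form $\n+\Gamma(Z)$ rescales to $\n + t\Gamma(tZ)$, and that a second-order operator $-g^{ij}(Z)\n_i\n_j$ rescales, up to $O(t)$ terms coming from Christoffel symbols and metric deformation, to a quadratic expression in $\n_{t,e_i}$.

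For the statement about $\n_t$, I would use the standard Taylor expansions of the connection forms in the radial trivialization centered at $x_0$. One has $\Gamma^{L_B}_Z = \tfrac12 R^{L_B}_{x_0}(Z,\cdot)+O(|Z|^2)$ while $\Gamma^{E_B}_Z$ and $\Gamma^{\mathrm{Bi}_B}_Z$ are $O(|Z|)$, and the conjugation by $\kappa^{1/2}$ contributes $-\tfrac12\kappa^{-1}d\kappa$, which is $O(|Z|)$ since $\kappa(0)=1$. Because the connection on $(\E_{0,p})_{B_0}$ carries $p$ copies of $\Gamma^{L_B}$, the rescaling produces
\begin{equation*}
\n_{t,e_i} = \n_{e_i} + tp\,\Gamma^{L_B}_{tZ}(e_i) + t\bigl(\Gamma^{E_B}+\Gamma^{\mathrm{Bi}_B}-\tfrac12\kappa^{-1}d\kappa\bigr)_{tZ}(e_i),
\end{equation*}
and since $tp=1/t$ and $R^{L_B}_{x_0}$ is linear in its first argument, the dominant contribution $tp\cdot\tfrac12 R^{L_B}_{x_0}(tZ,e_i)=\tfrac12 R^{L_B}_{x_0}(Z,e_i)$ is exactly $\n_{0,e_i}-\n_{e_i}$, while all remaining pieces are $O(t)$.

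For $\LL_t$, I would process each summand of \eqref{Lich-PhiL_pM0Phi^-1} after multiplying by $t^2$ and conjugating by $\kappa^{1/2}S_t$. The Bochner Laplacian produces the quadratic differential part of $\LL_0$ in $\n_{t,e_i}$, up to Christoffel and metric corrections that are $O(t)$ because in normal coordinates $g^{ij}(tZ)-\delta^{ij}=O(t^2)$ and the connection coefficients of $\n^{TB}$ vanish at $x_0$. The curvature term $-t^2 p(2\omega_{d,0}+\tau_0)(tZ)$ equals $-(2\omega_{d,x_0}+\tau_{x_0})+O(t)$ by smoothness of $\omega_d$ and $\tau$, and the scalar contributions $t^2\Psi_{\mathcal{E}_0}$ and $t^2 h_0^{-1}\Delta_{B_0}h_0$ are $O(t^2)$. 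The decisive summand is the moment map term: from \eqref{relations-mus}, $\wt{\mu}^{\E_{0,p}}=2i\pi p\,\wt{\mu}+\wt{\mu}^E+\wt{\mu}^{\mathrm{Bi}}$, and since $\wt{\mu}|_P=0$, combining \eqref{mu=moment} with the definition \eqref{def-J} yields $d_{x_0}\wt{\mu}(V)=-P^{TY}\boldrm{J}_{x_0}V$ for $V\in T_{x_0}B$ lifted horizontally, so that $\wt{\mu}(tZ)=-tP^{TY}\boldrm{J}_{x_0}Z+O(t^2|Z|^2)$. The cross terms $O(t^2 p)\langle\wt{\mu},\wt{\mu}^{E+\mathrm{Bi}}\rangle$ are $O(t)$, and
\begin{equation*}
-t^2\langle\wt{\mu}^{\E_{0,p}},\wt{\mu}^{\E_{0,p}}\rangle_{g^{TY_0}}(tZ) = \tfrac{4\pi^2}{t^2}\,|\wt{\mu}(tZ)|^2_{g^{TY}} + O(t) = 4\pi^2|P^{TY}\boldrm{J}_{x_0}Z|^2 + O(t).
\end{equation*}
Summing all contributions gives $\LL_t=\LL_0+O(t)$ uniformly on compact subsets of $\R^{2n-d}$.

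The main obstacle is this last cancellation. The prefactor $t^2p^2=1/t^2$ is a priori divergent, and only the first-order vanishing of $\wt{\mu}$ on $P$, combined with Lemma \ref{bL-ND} (which ensures that $\boldrm{J}$ is non-degenerate on $TY$ so that $P^{TY}\boldrm{J}_{x_0}$ is the correct linearization), rescues it and produces the finite harmonic-oscillator potential in the limit. Quantifying this cancellation uniformly in $Z$ and keeping the error genuinely $O(t)$ rather than merely $O(1)$ is the heart of the argument.
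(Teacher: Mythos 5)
Your proposal follows the same route as the paper's proof: you expand the connection forms via the radial-gauge Taylor expansion $\Gamma_Z(e_i)=\tfrac12 R_{x_0}(Z,e_i)+O(|Z|^2)$, observe that the $t^{-1}\Gamma^{L_B}_{tZ}$ contribution produces the $\tfrac12 R^{L_B}_{x_0}(Z,\cdot)$ term in $\n_0$, treat the Bochner and curvature pieces by smoothness and $t^2p=1$, and handle the a priori divergent moment-map term by exploiting $\wt\mu|_P=0$ together with the linearization $d_{x_0}\wt\mu=-P^{TY}\boldrm{J}_{x_0}$ coming from \eqref{mu=moment} and \eqref{def-J}, exactly as in the paper's computation \eqref{asymp-mutilde}--\eqref{norme-tmutilde-Ep}. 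You also correctly single out this last cancellation as the decisive step; the argument is sound and matches the paper's.
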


\begin{proof}
Let $\Gamma^{L_B}$, $\Gamma^{E_B}$ and $\Gamma^{\mathrm{Bi}_B}$ be the connection form of $\n^{L_B}$, $\n^{E_B}$ and $\n^{\mathrm{Bi}_B}$ with respect to fixed frame of $L_B$, $E_B$ and $\big(\Wedge(T^*M)\big)_B$ which are parallel along the curve $u\in [0,1]\mapsto uZ$ under our trivialization on $B^{T_{x_0}B}(0,4\e)$.

By  \eqref{def-rescaled}, we have for $|Z|\leq \e /t$
\begin{equation}
\label{nablat}
\n_{t,e_i}(Z) = \kappa^{1/2}\left(tZ\right)\left\{ \n_{e_i} + \left( t^{-1}\Gamma^{L_B}_{tZ}(e_i) +t\Gamma^{E_B}_{tZ}(e_i)+t\Gamma^{\mathrm{Bi}_B}_{tZ}(e_i)\right)\right\}\kappa^{-1/2}\left(tZ\right).
\end{equation}

It is a well known fact (see for instance \cite[Lemma 1.2.4]{ma-marinescu}) that for if $\Gamma= \Gamma^{L_B}$ (resp. $\Gamma^{E_B}$, $\Gamma^{\mathrm{Bi}_B}$) and $R=R^L$ (resp. $R^{E_B}$, $R^{\mathrm{Bi}_B}$), then
\begin{equation}
\label{dvlptGamma}
\Gamma_Z(e_i) = \frac{1}{2}R_{x_0}(Z,e_i)+O(|Z|^2).
\end{equation}

Thus,
\begin{equation}
\label{dvlptGammaup}
\begin{aligned}
&t\Gamma^{E_B}_{tZ}(e_i)+t\Gamma^{\mathrm{Bi}_B}_{tZ}(e_i) =    O\left( t^2 \right),  \\
&t^{-1}\Gamma^L_{tZ}(e_i) = \frac{1}{2}R^L_{x_0}(Z,e_i) +O\left( t \right).
\end{aligned}
\end{equation}

The first asymptotic development in Proposition \ref{asymp-Lt} follows from $\varphi(0)=\kappa(0)=1$, \eqref{nablat}, \eqref{dvlptGamma} and \eqref{dvlptGammaup}.  

Let $(g^{ij}(Z))$ is the inverse of the matrix $(g_{ij}(Z)):=(g^{T_{x_0}B}_Z(e_i,e_j))$. By \eqref{def-laplacien-de-Bochner}, \eqref{Lich-PhiL_pM0Phi^-1} and \eqref{def-rescaled} we have
\begin{multline}
\label{formule-Lt}
\LL_t(Z) = -g^{ij}(tZ)  \left(\n_{t,e_i}\n_{t,e_j} -t\n_ {t,\nabla ^{TB_0}_{e_i}e_j} \right)-\langle t\wt{\mu}^{\E_{0,p}},t\wt{\mu}^{\E_{0,p}}\rangle_{g^{TY}}(tZ) \\
-(2\omega_{0,d} +\tau_0)(tZ)  +t^2\Big(\Psi_{\mathcal{E}_0} +\frac{1}{h_0} \Delta_{B_0} h_0\Big)(tZ).
\end{multline}

With the asymptotic of $\n_t$ above,  \eqref{def-laplacien-de-Bochner} and the fact that $g^{ij}(0)=\delta_{ij}$ we find
\begin{equation}
\label{dvpt-Laplacien}
 -g^{ij}(tZ)  \left(\n_{t,e_i}\n_{t,e_j} -t\n_ {t,\nabla ^{TB_0}_{e_i}e_j} \right) =\sum_i \left(\n_{0,e_i}\right)^2+O(t) .
\end{equation}
Moreover,
\begin{equation}
\label{asymp-2e-bout}
-(2\omega_{0,d} +\tau_0)(tZ) + t^2\Big(\Psi_{\mathcal{E}_0} +\frac{1}{h_0} \Delta_{B_0} h_0\Big)(tZ) = -2\omega_{d,x_0}-\tau_{x_0} +O(t).
\end{equation}

Now, by \eqref{mu=moment},  \eqref{def-J} and the fact that $\wt{\mu}_{y_0} = 0$ for $y_0\in P$, $\pi(y_0)=x_0$, we get for $K\in \g$:
\begin{equation}
-\langle \boldrm{J}e_i^H,K^M\rangle_{y_0} = \omega(K^M,e_i^H)_{y_0}=\n_{e_i^H}(\mu(K))(y_0) = \langle \n^{TY}_{e_i^H} \wt{\mu},K^M\rangle_{y_0}.
\end{equation}
Thus,
\begin{equation}
\label{asymp-mutilde}
|\wt{\mu}|^2_{g^{TY}}(Z) = |\n^{TY}_{Z} \wt{\mu}|^2_{g^{TY}} +O(|Z|^3) = |P^{TY}\boldrm{J}_{x_0}Z|^2_{g^{TY}}+O(|Z|^3).
\end{equation}

Note that 
\begin{equation}
\label{norme-tmutilde-Ep}
\langle t\wt{\mu}^{\E_p},t\wt{\mu}^{\E_p}\rangle_{g^{TY}}= -4\pi^2 \frac{1}{t^2}|\wt{\mu}|^2_{g^{TY}}  +\big\langle4i\pi \wt{\mu}+t^2(\wt{\mu}^E+\wt{\mu}^{\mathrm{Bi}}),\wt{\mu}^E+\wt{\mu}^{\mathrm{Bi}}\big\rangle_{g^{TY}}.
\end{equation}
Thus, we get the second asymptotic development in Proposition \ref{asymp-Lt} by using \eqref{formule-Lt}, \eqref{dvpt-Laplacien}, \eqref{asymp-2e-bout}, \eqref{asymp-mutilde} and \eqref{norme-tmutilde-Ep}.
\end{proof}


\subsection{Convergence of the heat kernel}
\label{Sect-convergence}

In this section, we prove the convergence of the heat kernel of the rescaled operator. Note that here we must have a more precise result than in \cite[Sect. 1.6]{ma-marinescu} because in the proof of Theorem \ref{IMI} (see Section \ref{Sect-proof}) we will have to integrate along the normal directions, and thus we need a result of decay in these directions. To obtain it, we draw our inspiration form \cite{ma-zhang}.

Recall that $\mathcal{E}_0 = \Wedge(T^*M_0)\otimes E_0$ and that we have trivialized the Hermitian bundle $(\mathcal{E}_{0,B_0},h^{\mathcal{E}_{0,B_0}})$ on $B_0=T_{x_0}B$ by identifying it to $(\mathcal{E}_{B,x_0},h^{\mathcal{E}_{B,x_0}})$. Recall also that $\mu_0\colon M_0 \to \g^*$ is the moment map of the $G$-action on $M_0$.

Let $\|\cdot\|_{L^2}$ be the $L^2$-norm on $\smooth(B_0,\mathcal{E}_{B,x_0})$ induced by $g^{T_{x_0}B}$ and $h^{\mathcal{E}_{B,x_0}}$ as in \eqref{def-L^2-product}.

Let $\{f_l\}$ be a $G$-invariant orthonormal frame of $TY$ on $\pi^{-1}\big(B^B(x_0,4\e)\big)$, then $\{f_{0,l}(Z)=f_l(\varphi_\e(g,Z))\}$ is a $G$-invariant orthonormal frame of $TY_0$ on $M_0$.

\begin{defn}
 Set
 \begin{equation}
\D_t = \Big\{\n_{t,e_i}\, , \: 1\leq i\leq 2n-d \: ; \: \frac{1}{t} \langle \wt{\mu}_0,f_{0,l}\rangle(tZ)\, , \: 1\leq l\leq d\Big\},
\end{equation}
and for $k\in \N^*$, let $\D^m_t$ be the family of operators $Q$ acting on $\smooth(T_{x_0}B,\mathcal{E}_{B,x_0})$ which can be written in the form $Q=Q_1\dots Q_m$ with $Q_i\in \D_t$.
\end{defn}

For $s \in \smooth(B_0,\mathcal{E}_{B,x_0})$ and $k\in \N^*$, set
\begin{equation}
\label{def-norme(t,k)}
\begin{aligned}
&||s||_{t,0}^2=||s||_{L^2}^2, \\
&||s||_{t,m}^2=||s||_{t,0}^2 + \sum_{\ell=1}^m \sum_{Q\in \D_t^\ell}  ||Qs||_{t,0}^2. \\
\end{aligned}
\end{equation}
We denote by $\sob^m_t$ the Sobolov space $\sob^m(B_0,\mathcal{E}_{B,x_0})$ endowed with the norm $||\cdot ||_{t,m}$, and by $\sob^{-1}_t$ the Sobolev space of order $-1$ endowed with the norm
\begin{equation}
||s||_{t,-1}=\sup_{s'\in\sob^1_p\setminus\{0\}} \frac{\langle s,s'\rangle_{t,0}}{||s'||_{t,0}}\, .
\end{equation}

Finally, if $A\in \LL(\sob^k_t, \sob^m_t)$, we denote by $||A||^{k,m}_t$ the operator norm of $A$ associated with $||\cdot||_{t,k}$ and $||\cdot||_{t,m}$.

Then $\LL_t$ is a formally self-adjoint elliptic operator with respect to $\|\cdot\|_{t,0}$ and is a smooth family of operators with respect to the parameter $x_0\in M_G$.

We denote by $\smooth_c(B_0,\mathcal{E}_{B,x_0})$ the set of smooth section of $\mathcal{E}_{B,x_0}$ over $B_0$ with compact support.

\begin{prop}
\label{estimationLupprop}
There exist constants $C_1,C_2,C_3>0$ such that for any $t\in\, ]0,1]$ and any $s,s'\in\smooth_c(B_0,\mathcal{E}_{B,x_0})$,
\begin{equation}
\label{estimationsLup}
\begin{aligned}
&\langle \LL_t s, sÊ\rangle_{t,0} \geq C_1||s||_{t,1}^2 - C_2||s||_{t,0}^2, \\
& \left|\langle \LL_t s, s'Ê\rangle_{t,0}\right| \leq C_3 ||s||_{t,1} ||s'||_{t,1}. \\
\end{aligned}
\end{equation}
\end{prop}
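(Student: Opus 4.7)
The proof rests on identifying, inside the explicit formula \eqref{formule-Lt}, the two pieces of $\LL_t$ that dominate $\|\cdot\|_{t,1}^2$: the Bochner part $-g^{ij}(tZ)\n_{t,e_i}\n_{t,e_j}$, which controls the covariant derivatives $\n_{t,e_i}s$, and the positive potential
\begin{equation*}
\tfrac{4\pi^2}{t^2}|\wt{\mu}_0|^2_{g^{TY_0}}(tZ) = 4\pi^2\sum_l \bigl(\tfrac{1}{t}\langle\wt{\mu}_0,f_{0,l}\rangle(tZ)\bigr)^2,
\end{equation*}
which controls the moment-map multiplication operators in $\D_t$. This positive term is extracted from $-\langle t\wt{\mu}^{\E_{0,p}},t\wt{\mu}^{\E_{0,p}}\rangle_{g^{TY_0}}(tZ)$ via \eqref{norme-tmutilde-Ep}. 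All remaining pieces of \eqref{formule-Lt} are subordinate in a quantifiable sense.

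For the first inequality of \eqref{estimationsLup}, the plan is to integrate the Bochner part by parts. The measure underlying $\langle\cdot,\cdot\rangle_{t,0}$ is the flat Lebesgue measure on $T_{x_0}B$, thanks precisely to the $\kappa^{1/2}$-conjugation introduced in \eqref{def-rescaled}, so the formal adjoint of $\n_{t,e_i}$ with respect to $\langle\cdot,\cdot\rangle_{t,0}$ equals $-\n_{t,e_i}+tA_i(tZ)$ for some uniformly bounded endomorphism $A_i$; this follows from the expansion \eqref{dvlptGammaup} of the connection forms of $\n^{L_B},\n^{E_B},\n^{\mathrm{Bi}_B}$. Combined with the uniform ellipticity $g^{ij}(tZ)\xi_i\xi_j\ge c|\xi|^2$---which holds because $g^{TM_0}(g,Z)=g^{TM}(\varphi_\e(g,Z))$ takes values in a compact set of metrics---one obtains
\begin{equation*}
\Bigl\langle -g^{ij}(tZ)\bigl(\n_{t,e_i}\n_{t,e_j}-t\n_{t,\n^{TB_0}_{e_i}e_j}\bigr)s,s\Bigr\rangle_{t,0} \ge c\sum_i\|\n_{t,e_i}s\|_{t,0}^2 - C\sum_i\|\n_{t,e_i}s\|_{t,0}\|s\|_{t,0},
\end{equation*}
the cross term being absorbed via $2ab\le \eta a^2+\eta^{-1}b^2$. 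The bounded zeroth-order matrices $(2\omega_{0,d}+\tau_0)(tZ)$ and $t^2(\Psi_{\mathcal{E}_0}+h_0^{-1}\Delta_{B_0}h_0)(tZ)$ each contribute at most $C\|s\|_{t,0}^2$. Finally, the mixed contributions produced by \eqref{norme-tmutilde-Ep}, of the form $\tfrac{1}{t}\langle\wt{\mu}_0,f_{0,l}\rangle(tZ)\cdot\beta(tZ)$ with $\beta$ uniformly bounded (since $\wt{\mu}^{E_0}$ and $\wt{\mu}^{\mathrm{Bi}_0}$ remain bounded on $M_0$ by the construction of $\n^{E_0}$ and $\n^{\mathrm{Bi}_0}$), are absorbed by Young's inequality into $\eta\sum_l\|\tfrac{1}{t}\langle\wt{\mu}_0,f_{0,l}\rangle(tZ)s\|_{t,0}^2+C_\eta\|s\|_{t,0}^2$.

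For the second inequality, the same integration by parts in the Bochner piece rewrites $\langle\LL_t s,s'\rangle_{t,0}$ as a finite sum of terms $\langle Qs,Q's'\rangle_{t,0}$ with $Q,Q'\in\D_t\cup\{\Id\}$, up to bounded zeroth-order contributions. Cauchy-Schwarz then yields the claimed $C_3\|s\|_{t,1}\|s'\|_{t,1}$. In particular, the quadratic moment term is paired directly as $\sum_l\langle \tfrac{1}{t}\langle\wt{\mu}_0,f_{0,l}\rangle s,\tfrac{1}{t}\langle\wt{\mu}_0,f_{0,l}\rangle s'\rangle_{t,0}$, which is immediately estimated by the product of the two $\|\cdot\|_{t,1}$-norms.

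The only truly delicate point is the behavior at infinity: by the extension formula \eqref{muM0}, $\wt{\mu}_0(tZ)$ grows linearly in $|Z^\perp|$, so the coefficients of $\LL_t$ are not uniformly bounded on $B_0$. One must verify that the quadratic positive contribution $\tfrac{4\pi^2}{t^2}|\wt{\mu}_0|^2(tZ)$ suffices to absorb \emph{all} such linearly growing cross terms, and that the constants $C_1,C_2,C_3$ so obtained remain independent of $t\in\,]0,1]$ and of the base point $x_0\in M_G$.
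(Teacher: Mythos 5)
Your argument follows the paper's route: the Bochner piece is integrated by parts against the flat measure (using the $\kappa^{1/2}$-conjugation, cf.\ \eqref{adjointdenati}) to control $\sum_i\|\n_{t,e_i}s\|_{t,0}^2$, the positive term $4\pi^2\,t^{-2}|\wt{\mu}_0|^2_{g^{TY_0}}(tZ)$ is isolated from \eqref{norme-tmutilde-Ep} to control the moment multipliers, and all remaining terms are absorbed. The closing ``delicate point'' you raise is not actually open: the only linearly growing coefficients are $\tfrac{1}{t}\langle\wt{\mu}_0,f_{0,l}\rangle(tZ)$ times bounded factors (by \eqref{muM0-loin} the extraneous $\wt{\mu}^{E_0},\wt{\mu}^{\mathrm{Bi}_0}$ pieces vanish for $|Z|>4\e$ and are bounded everywhere), and your Young's-inequality absorption into $\eta\sum_l\|\tfrac{1}{t}\langle\wt{\mu}_0,f_{0,l}\rangle(tZ)s\|_{t,0}^2+C_\eta\|s\|_{t,0}^2$ already closes the gap uniformly in $t$ and $x_0$, which is precisely the content of \eqref{<Lts,s>-bout-application-moment}.
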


\begin{proof}
From \eqref{formule-Lt} and \eqref{def-norme(t,k)}, we have
\begin{multline}
\label{<Lts,s>}
\langle \LL_ts,s\rangle_{t,0}=\|\n_ts\|^2_{t,0} -t^2 \big\langle \langle \wt{\mu}^{\E_{0,p}},\wt{\mu}^{\E_{0,p}}\rangle_{g^{TY}}(tZ)s,s\big\rangle_{t,0} \\
+\left\langle S_t^{-1}\Big(-(2\omega_{0,d} +\tau_0) +t^2\Big(\Psi_{\mathcal{E}_0} +\frac{1}{h_0} \Delta_{B_0} h_0\Big)\Big)s,s\right\rangle_{t,0}.
\end{multline}

By \eqref{muM0} and our constructions, we know that for $Z\in T_{\R,x_0}B$ with $|Z|>4\e$,
\begin{equation}
\label{muM0-loin}
\mu^{\E_{0,p}}(K)_{(1,Z)}=2i\pi p \,\mu_0(K)_{(1,Z)} = pR^L_{y_0}\big((Z^\perp)^H,K^X_{y_0}\big).
\end{equation}
Thus, from \eqref{carre-norme-mutilde-F}, \eqref{muM0}, \eqref{norme-tmutilde-Ep} and \eqref{muM0-loin}, we get
\begin{equation}
\label{<Lts,s>-bout-application-moment}
-t^2 \big\langle \langle \wt{\mu}^{\E_{0,p}},\wt{\mu}^{\E_{0,p}}\rangle_{g^{TY}}(tZ)s,s\big\rangle_{t,0} \geq 2\pi^2 \sum_{l=1}^d \Big\| \frac{1}{t} \langle\wt{\mu}_0,f_{0,l}\rangle(tZ)s\Big\|^2_{t,0}-Ct\|s\|^2_{t,0}.
\end{equation}

Now, \eqref{estimationsLup} follows from \eqref{<Lts,s>} and \eqref{<Lts,s>-bout-application-moment}.
\end{proof}

Let $\Gamma$ be the contour in $\C$ defined in Figure \ref{contour-Gamma}.

\begin{figure}[!h]
\hfill
\setlength{\unitlength}{4cm}
\begin{picture}(2.5,1.125)
\put(0.25,0.5){\vector(1,0){1.85}}
\put(0.70,0.025){\vector(0,1){0.95}}
\thicklines \put(0.50,0.25){\line(0,1){0.5}}
\thicklines \put(0.50,0.25){\line(1,0){1,55}}
\thicklines \put(0.50,0,75){\line(1,0){1,55}}
\put(0.20,0.55){$-2C_2$}
\put(0.50,0.5){\circle*{0.029}}
\put(1.25,0.25){${}_\blacktriangleright$}
\put(1.25,0.75){${}_\blacktriangleleft$}
\put(1.25,0.85){$\Gamma$}
\put(0.72,0.78){$i$}
\put(0.72,0.15){$-i$}
\put(0.63,0.4){0}
\end{picture}
\hfill \hfill
\caption{}
\label{contour-Gamma}
\end{figure}

\begin{prop}
\label{estimationsresoprop}
There exist $t_0>0$ and $C>0$, $a,b \in \N$ such that for any $t\in\, ]0,t_0]$ and any $\lambda \in \Gamma$, the resolvant $\left( \lambda-\LL_{t}\right)^{-1}$ exists and
\begin{equation}
\label{estimationsreso}
\begin{aligned}
&\left\| \left( \lambda-\LL_{t}\right)^{-1} \right\|^{0,0}_t \leq C, \\
&\left\| \left( \lambda-\LL_{t}\right)^{-1} \right\|^{-1,1}_t \leq C(1+|\lambda|^2).
\end{aligned}
\end{equation}
\end{prop}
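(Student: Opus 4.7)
The plan is to combine the self-adjointness of $\LL_t$ with the G\aa rding-type inequalities of Proposition~\ref{estimationLupprop}, and then treat the two branches of the contour $\Gamma$ separately.

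I would first observe that $\LL_t$ is formally self-adjoint with respect to $\langle \cdot, \cdot\rangle_{t,0}$. This follows from the symmetry of $D_p^2$ (as the square of the formally self-adjoint operator $D_p$), from $\Phi$ being an isometry (see \eqref{def-Phi}), from the conjugation by $\kappa^{1/2}$ which transfers the reference volume from $dv_{B_0}$ to the flat volume $dv_{TB}$, and from the fact that $S_t$ is a similitude. Combined with the first inequality of Proposition~\ref{estimationLupprop}, this yields the uniform spectral inclusion $\mathrm{Spec}(\LL_t) \subset [-C_2, +\infty)$ for $t \in \, ]0, 1]$. Since $\Gamma$ stays at distance at least $\min(C_2, 1) > 0$ from this half-line, the resolvent $(\lambda - \LL_t)^{-1}$ exists and the spectral theorem gives
\[
\|(\lambda - \LL_t)^{-1}\|^{0,0}_t \leq \frac{1}{\mathrm{dist}(\lambda, \mathrm{Spec}(\LL_t))} \leq \frac{1}{\min(C_2, 1)},
\]
which is the first estimate in \eqref{estimationsreso}.

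For the $(-1, 1)$-estimate, set $u = (\lambda - \LL_t)^{-1} s$. By self-adjointness of $\LL_t$ and Proposition~\ref{estimationLupprop},
\[
\Re \langle (\LL_t - \lambda) u, u\rangle_{t,0} \geq C_1 \|u\|_{t,1}^2 - (C_2 + \Re \lambda)\|u\|_{t,0}^2,
\]
\[
|\Im \langle (\LL_t - \lambda) u, u\rangle_{t,0}| = |\Im \lambda|\,\|u\|_{t,0}^2,
\]
while duality gives $|\langle (\LL_t - \lambda) u, u\rangle_{t,0}| \leq \|s\|_{t,-1}\|u\|_{t,1}$. When $\Re\lambda \leq -C_2$ (which covers the whole vertical branch of $\Gamma$), the second term in the real-part estimate is non-positive, so $C_1 \|u\|_{t,1}^2 \leq \|s\|_{t,-1}\|u\|_{t,1}$ and hence $\|u\|_{t,1} \leq C_1^{-1}\|s\|_{t,-1}$. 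When $\Re\lambda > -C_2$, $\lambda$ lies on a horizontal branch of $\Gamma$, so $|\Im\lambda| = 1$, and the imaginary-part identity yields $\|u\|_{t,0}^2 \leq \|s\|_{t,-1}\|u\|_{t,1}$; substituting into the real-part inequality gives
\[
C_1 \|u\|_{t,1}^2 \leq (1 + C_2 + \Re\lambda)\,\|s\|_{t,-1}\|u\|_{t,1},
\]
so $\|u\|_{t,1} \leq C(1 + |\lambda|)\|s\|_{t,-1}$. In both cases the bound is dominated by $C'(1 + |\lambda|^2)\|s\|_{t,-1}$, as required.

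The main subtlety is uniformity in $t \in \, ]0, 1]$, but this is automatic: the argument only uses the algebraic fact that $\LL_t$ is self-adjoint and the G\aa rding inequality of Proposition~\ref{estimationLupprop}, whose constants $C_1, C_2, C_3$ are already established uniformly in $t$. No additional analysis is required.
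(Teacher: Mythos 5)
Your proof is correct, and it takes a genuinely different route from the paper's. Both proofs start from the same two ingredients: the formal self-adjointness of $\LL_t$ and the uniform G{\aa}rding estimate of Proposition~\ref{estimationLupprop}, from which the spectral inclusion $\spec(\LL_t)\subset[-C_2,+\infty)$ and the $(0,0)$-bound on the resolvent follow immediately (both proofs are equally terse on the technical point that the closure of $\LL_t$ from $\smooth_c$ is self-adjoint). The divergence is in the $(-1,1)$-estimate. You handle it by a direct quadratic-form argument, splitting according to whether $\Re\lambda\leq -C_2$ or $\Re\lambda>-C_2$: in the first case the $-(C_2+\Re\lambda)\|u\|_{t,0}^2$ term has a good sign, while in the second case you exploit $|\Im\lambda|=1$ on the horizontal branches and the identity $\Im\langle(\LL_t-\lambda)u,u\rangle_{t,0}=-\Im\lambda\,\|u\|_{t,0}^2$ to trade the bad term for something controllable, ultimately giving the sharper bound $O(1+|\lambda|)$. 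The paper instead first proves $\|(\lambda_0-\LL_t)^{-1}\|_t^{-1,1}\leq 1/C_1$ only for a fixed real $\lambda_0\leq -2C_2$, then uses the resolvent identity $(\lambda-\LL_t)^{-1}=(\lambda_0-\LL_t)^{-1}-(\lambda-\lambda_0)(\lambda-\LL_t)^{-1}(\lambda_0-\LL_t)^{-1}$ together with the $(0,0)$-bound to propagate this to all $\lambda\in\Gamma$, first obtaining the $(-1,0)$-bound and then, by exchanging the last two factors, the $(-1,1)$-bound, which produces the $O(1+|\lambda|^2)$ growth stated. Your approach is a bit more direct and even quantitatively slightly better; the paper's is more modular, factoring the work through a single anchor point $\lambda_0$ and the resolvent identity, a pattern that gets reused in later parts of the argument (for instance in Proposition~\ref{norme(m,m+1)resolvante}). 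Either is perfectly acceptable here.
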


\begin{proof}
Note that $\LL_{t}$ is  self-adjoint operator, thus \eqref{estimationsLup} implies that  $\big( \lambda-\LL_{t}\big)^{-1}$ exists for $\lambda \in \Gamma$ and there is a constant $C>0$ (independent of $\lambda$) such that
\begin{equation}
\left\| \big( \lambda-\LL_{t}\big)^{-1} \right\|^{0,0}_t \leq C.
\end{equation}

On the other hand, if $\lambda_0 \in ]-\infty, -2C_2]$, then \eqref{estimationsLup} also implies that
\begin{equation}
\left\| \big( \lambda_0-\LL_{t}\big)^{-1} \right\|^{-1,1}_t \leq \frac{1}{C_1}.
\end{equation}
Then, using the fact that
\begin{equation}
\label{lambdaetlambda0}
\big( \lambda-\LL_{t}\big)^{-1}=\big( \lambda_0-\LL_{t}\big)^{-1} -(\lambda-\lambda_0)\big( \lambda-\LL_{t}\big)^{-1}\big( \lambda_0-\LL_{t}\big)^{-1},
\end{equation}
we find that
\begin{equation}
\label{norme(-1,0)endegre0}
\left\| \big( \lambda-\LL_{t}\big)^{-1} \right\|^{-1,0}_t \leq \frac{1}{C_1}\left( 1 + C|\lambda-\lambda_0|\right).
\end{equation}
Finally, exchanging the last two factors in \eqref{lambdaetlambda0} and applying \eqref{norme(-1,0)endegre0}, we get
\begin{equation}
\label{estimationsresoendeg0}
\begin{aligned}
\left\| \big( \lambda-\LL_{t}\big)^{-1} \right\|^{-1,1}_t &\leq \frac{1}{C_1} + \frac{|\lambda-\lambda_0|}{C_1^2}\left( 1 + C|\lambda-\lambda_0|\right)  \\
& \leq C(1+|\lambda|^2).
\end{aligned}
\end{equation}
The proof of our Proposition is complete.
\end{proof}

\begin{prop}
\label{estimationcommutateursprop}
Take $m\in \N^*$. Then there exists a contant $C_m>0$ such that for any $t\in\, ]0,1]$,  $Q_1,\dots,Q_m \in \D_t\cup\{Z_i\}_{i=1}^{2n-d}$ and  $s,s'\in \smooth_c(B_0, \mathcal{E}_{B,x_0})$,
\begin{equation}
\label{estimationcommutateurs}
\left| \Big\langle [Q_1,[Q_2,\dots[Q_m,\LL_{t}]\dots]]s,s'\Big\rangle_{t,0} \right| \leq C_m||s||_{t,1}||s'||_{t,1}.
\end{equation}
\end{prop}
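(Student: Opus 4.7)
The plan is to introduce a graded class of operators capturing the differential-order structure of $\LL_t$ in terms of $\D_t$, show $\LL_t$ lies in the class at level $2$, and prove that each commutator with an element of $\D_t\cup\{Z_i\}_{i=1}^{2n-d}$ decreases the level by one. The desired estimate then follows directly because operators at level $\le 1$ satisfy it.

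As a preliminary step, I would rewrite $\LL_t$ from \eqref{formule-Lt} and \eqref{norme-tmutilde-Ep}, using that $\wt{\mu}$ vanishes on $P$ and that $\{f_{0,l}\}$ is orthonormal, in the standard form
\begin{equation*}
\LL_t = \sum_{i,j} a_{ij}(t,tZ)\,\n_{t,e_i}\n_{t,e_j} + \sum_i b_i(t,tZ)\,\n_{t,e_i} + 4\pi^2\sum_l \eta_{t,l}^{\,2} + \sum_l c_l(t,tZ)\,\eta_{t,l} + d(t,tZ),
\end{equation*}
where $\eta_{t,l}:=\tfrac{1}{t}\langle\wt{\mu}_0, f_{0,l}\rangle(tZ)\in\D_t$ and the coefficients $a_{ij},b_i,c_l,d$ are sections of $\End(\mathcal{E}_{B,x_0})$, smooth in $(t,Z')\in]0,1]\times\R^{2n-d}$, with all $\mathscr{C}^\infty$-norms in $Z'$ bounded uniformly in $t$. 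The seemingly singular $\tfrac{1}{t^2}|\wt{\mu}|^2(tZ)$ part of $\LL_t$ is exactly the sum of squares $\sum_l\eta_{t,l}^{\,2}$, so it fits inside the class.

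Next I would define, for $k\in\{0,1,2\}$, the class $\mathcal{R}^k$ of finite sums of terms $a(t,tZ)\,R_1\cdots R_j$ with $j\le k$, $R_i\in\D_t$, and $\sup_{t\in]0,1]}\|a\|_{\mathscr{C}^\infty}<\infty$. By the above, $\LL_t\in\mathcal{R}^2$. The core lemma is: for every $Q\in\D_t\cup\{Z_i\}$ and every $A\in\mathcal{R}^k$ with $k\ge 1$, $[Q,A]\in\mathcal{R}^{k-1}$, whereas $[Q,\mathcal{R}^0]\subset\mathcal{R}^0$. By Leibniz, this reduces to computing $[Q,a(t,tZ)]$ and $[Q,R]$ for $R\in\D_t$. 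The first vanishes if $Q\in\{Z_k,\eta_{t,l}\}$ and equals the bounded function $t(\partial_{e_i}a)(t,tZ)$ if $Q=\n_{t,e_i}$. The second is a bounded function of $tZ$ in every case: $[Z_k,\n_{t,e_i}]=-\delta_{ik}$, $[Z_k,\eta_{t,l}]=0$ and $[\eta_{t,k},\eta_{t,l}]=0$ are immediate; $[\n_{t,e_i},\eta_{t,l}]=(\partial_{e_i}\langle\wt{\mu}_0,f_{0,l}\rangle)(tZ)$ is smooth with all derivatives bounded uniformly in $t$; and $[\n_{t,e_i},\n_{t,e_j}]$ equals $t^2$ times the curvature of $\n^{(\E_{0,p})_{B_0}}=p\n^{L_B}+\n^{E_B}+\n^{\mathrm{Bi}_B}$ evaluated at $tZ$, whose $L^p$-contribution remains $O(1)$ because $p=1/t^2$ precisely cancels $t^2$, while the rest is $O(t^2)$.

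Iterating the lemma yields $[Q_1,[Q_2,\dots,[Q_m,\LL_t]\cdots]]\in\mathcal{R}^{\max(2-m,0)}\subset\mathcal{R}^1$. To conclude, for $A=\sum_\alpha a_\alpha R_1^\alpha\cdots R_{j_\alpha}^\alpha\in\mathcal{R}^1$ I split derivatives symmetrically using the adjoints $(\n_{t,e_i})^*=-\n_{t,e_i}+\beta_i(tZ)$ (with $\beta_i$ a bounded function issued from $\kappa$ and $\sqrt{\det g^{TB_0}}$) and $\eta_{t,l}^*=\eta_{t,l}$: this moves at most one $\D_t$-factor onto each side of $\langle\cdot,\cdot\rangle_{t,0}$, yielding $|\langle As,s'\rangle_{t,0}|\le C\|s\|_{t,1}\|s'\|_{t,1}$ by \eqref{def-norme(t,k)}. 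The main obstacle is the bookkeeping in the commutation lemma: one must verify that the factor $\tfrac{1}{t}\Gamma^{L_B}_{tZ}$ appearing in $\n_{t,e_i}$ and the $\tfrac{1}{t^2}|\wt\mu|^2(tZ)$ piece of $\LL_t$, both a priori singular in $t$, produce only contributions with $\mathscr{C}^\infty$-norms bounded uniformly in $t\in]0,1]$, which relies on the rescaling $p=1/t^2$ compensating the $L^p$ curvature and on the vanishing of $\wt{\mu}$ on $P$.
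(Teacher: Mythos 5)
Your overall strategy mirrors the paper's: identify a structural class of operators containing $\LL_t$, show that nested commutators stay in that class, and then deduce the bound by symmetric integration by parts via $(\n_t)^*=-\n_t-t(\kappa^{-1}\n\kappa)(tZ)$. However, there is a genuine error in your core lemma. You claim that for $A=a(t,tZ)R_1\cdots R_k\in\mathcal{R}^k$ and $Q\in\D_t\cup\{Z_i\}$, one has $[Q,A]\in\mathcal{R}^{k-1}$. This is false when $Q=\n_{t,e_i}$: the Leibniz expansion produces the term $[\n_{t,e_i},a(t,tZ)]\,R_1\cdots R_k=t(\partial_{e_i}a)(t,tZ)\,R_1\cdots R_k$, which still has $k$ factors from $\D_t$ and hence lies only in $\mathcal{R}^k$, not $\mathcal{R}^{k-1}$. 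The coefficient is $O(t)$, but your class $\mathcal{R}^k$ is defined by the number of $\D_t$-factors, not by powers of $t$, so this term does not drop a level. Consequently, iteration does not land you in $\mathcal{R}^1$; the nested commutator remains of order~$2$.

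The correct statement --- and what the paper's proof actually uses --- is that commutation with $\D_t\cup\{Z_i\}$ \emph{preserves} the order-$2$ structure, i.e., $[Q,\LL_t]$ is again a finite sum of the form $\sum a_{ij}\n_{t,e_i}\n_{t,e_j}+\sum b_i\n_{t,e_i}+\sum d_l\,\eta_{t,l}+d'\,|\frac{1}{t}\wt{\mu}_0|^2+c$ with coefficients bounded uniformly in $t$ together with all $Z$-derivatives (see \eqref{structure}). The estimate then follows because for any operator of this form one can split at most one $\D_t$-factor onto each side of $\langle\cdot,\cdot\rangle_{t,0}$ using the adjoint formula, which you in fact invoke in your last step but only for $\mathcal{R}^1$. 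Your final integration-by-parts paragraph is fine for handling order-$2$ operators; the gap is that the grading lemma you rely on to \emph{reach} order~$1$ is simply not true, and is also unnecessary. Replacing "decreases the level by one" with "preserves the structure \eqref{structure}" and applying the same symmetric splitting to order-$2$ terms closes the argument and reproduces the paper's proof.
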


\begin{proof}
First, note that $[\n_{t,e_i},Z_j]=\delta_{ij}$. Thus by \eqref{formule-Lt}, we know that $[Z_j,\LL_{t}]$ satisfies \eqref{estimationcommutateurs}.

Using \eqref{muM0} and \eqref{muM0-loin}, we see that $\big(\n_{e_i}\langle\wt{\mu}_0,f_{0,l}\rangle\big)(tZ)$ is uniformly bounded with its derivatives for $t\in[0,1]$, and for $|Z|\geq 4\e$,
\begin{equation}
\label{derivee-loin-mutilde(f0l)}
\big(\n_{e_i}\langle\wt{\mu}_0,f_{0,l}\rangle\big)(Z) = \big(e_i\langle\wt{\mu}_0,f_{0,l}\rangle\big)_{x_0}=\omega_{x_0}(f_{0,l},e_i).
\end{equation}
Thus, $[\frac{1}{t}\langle\wt{\mu}_0,f_{0,l}\rangle(tZ),\LL_t]$ also satisfies \eqref{estimationcommutateurs}.

Let $R^{(L_0)_{B_0}}$ and $R^{(\mathcal{E}_0)_{B_0}}$ be the curvatures of the connections on $(L_0)_{B_0}$ and $(\mathcal{E}_0)_{B_0}$ induced by $\n^{L_0}$, $\n^{E_0}$ and $\n^{\mathrm{Bi_0}}$. Then by \eqref{def-rescaled}, we have
\begin{equation}
\label{comavecnati}
\big[\n_{t,e_i},\n_{t,e_j}\big] = \big( R^{L_{0,B_0}}+ t^2R^{\mathcal{E}_{0,B_0}} \big)_{tZ}(e_i,e_j).
\end{equation}

By \eqref{formule-Lt}, \eqref{derivee-loin-mutilde(f0l)} and \eqref{comavecnati}, we find that $\big[\n_{t,e_i},\LL_{t}\big]$ has the same structure as $\LL_{t}$ for $t\in \, ]0,1]$, by which we mean that it is of the form
\begin{multline}
\label{structure}
\sum_{i,j} a_{ij}(t,tZ) \n_{t,e_i}\n_{t,e_i} + \sum_{i}  b_{i}(t,tZ) \n_{t,e_i} + c(t,tZ) \\
+\sum_l \left[ d_l(t,tZ)\frac{1}{t}\langle\wt{\mu}_0,f_{0,l}\rangle(tZ)+ d'\Big| \frac{1}{t}\wt{\mu}_0\Big|_{g^{TY}}^2 \right],
\end{multline}
where $d'\in \C$, and $a_{ij}$, $b_i$, $c$ and $d_l$ are polynomials in the first variable, and have all their derivatives in the second variable uniformly bounded for $Z\in \R^{2n-d}$ and $t\in [0,1]$. Note that in fact, for $\big[\n_{t,e_i},\LL_{t}\big]$, $d'=0$ in \eqref{structure}.

The adjoint connection $(\n_t)^*$ of $\n_t$ with respect to $\langle \cdot \, , \cdot \rangle_{t,0}$ is given by
\begin{equation}
\label{adjointdenati}
(\n_t)^* = - \n_t -t \big(\kappa^{-1} \n \kappa\big)(tZ).
\end{equation}
Note that the last term of \eqref{adjointdenati} and all its derivative in $Z$ are uniformly bounded for $Z\in \R^{2n-d}$ and $t\in [0,1]$. Thus, by \eqref{structure} and \eqref{adjointdenati}, we find that \eqref{estimationcommutateurs} holds when $m=1$.

Finally, we can prove by induction that $[Q_1,[Q_2,\dots[Q_m,\LL_{t}]\dots]]$ has also the same structure as in \eqref{structure}, and thus satisfies \eqref{estimationcommutateurs} thanks to \eqref{adjointdenati}.
\end{proof}

\begin{prop}
\label{norme(m,m+1)resolvante}
For any $t\in\, ]0,t_0]$, $\lambda \in \Gamma$ and $m\in \N$,
\begin{equation}
(\lambda-\LL_{t})^{-1}\big( \sob^m_t\big) \subset \sob^{m+1}_t.
\end{equation}

Moreover, for any $\alpha \in \N^{2n-d}$, there exist $K\in \N$ and $C_{\alpha,m}>0$ such that for any $t\in \, ]0,1]$, $\lambda \in \Gamma$ and $s\in \smooth_c(B_0, \mathcal{E}_{B,x_0})$,
\begin{equation}
\left\| Z^\alpha (\lambda-\LL_{t})^{-1}s\right\|_{t,m+1} \leq C_{\alpha,m}(1+|\lambda|^2)^K\sum_{\alpha'\leq \alpha} ||Z^{\alpha'}s||_{t,m}.
\end{equation}
\end{prop}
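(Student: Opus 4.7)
The first assertion, that $(\lambda-\LL_t)^{-1}$ maps $\sob^m_t$ into $\sob^{m+1}_t$, will follow from the quantitative second bound, so I concentrate on the latter. I would proceed by double induction on $m$ and $|\alpha|$, ordered lexicographically. Set $R = (\lambda-\LL_t)^{-1}$ throughout. The base case $m=0,\alpha=0$ is immediate from Proposition \ref{estimationsresoprop}: since Cauchy--Schwarz gives $\|s\|_{t,-1}\le\|s\|_{t,0}$, the second bound there yields
\begin{equation*}
\|Rs\|_{t,1}\le C(1+|\lambda|^2)\|s\|_{t,0}.
\end{equation*}

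For the inductive step in $m$, I would exploit the basic commutation identity
\begin{equation*}
QR=RQ+R\,[Q,\LL_t]\,R,
\end{equation*}
valid for any operator $Q$. Iterating with $Q_1,\ldots,Q_{m+1}\in\D_t$, one writes $Q_1\cdots Q_{m+1}R$ as a finite sum of products
\begin{equation*}
R\,C_1\,R\,C_2\cdots R\,C_r\,Q',
\end{equation*}
where each $C_i$ is an iterated commutator $[Q_{j_1},[Q_{j_2},\ldots[Q_{j_\ell},\LL_t]\ldots]]$ of some subset of the $Q_j$'s with $\LL_t$, and $Q'\in\D_t^{\le m}$ is made of the remaining $Q_j$'s. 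By Proposition \ref{estimationcommutateursprop} each $C_i$ is bounded $\sob^1_t\to\sob^{-1}_t$ uniformly in $t,\lambda$; by Proposition \ref{estimationsresoprop}, $R:\sob^{-1}_t\to\sob^1_t$ with norm $C(1+|\lambda|^2)$ and $R:\sob^0_t\to\sob^0_t$ with norm $C$. Composing these factors alternately gives that the chain $R\,C_1\,R\cdots R\,C_r$ is bounded from $\sob^0_t$ to $\sob^0_t$ with norm $C(1+|\lambda|^2)^K$ for some $K$ depending only on $m$; meanwhile $Q'$ sends $\sob^m_t\to\sob^0_t$ with norm $\le\|s\|_{t,m}$, so the full composition applied to $s$ belongs to $\sob^0_t$ with the desired bound, yielding control of $\|Q_1\cdots Q_{m+1}Rs\|_{t,0}$ and hence of $\|Rs\|_{t,m+1}$.

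For the weighted estimate, I include the multiplications by $Z_i$ in the list of operators to be commuted across $R$. This is justified by the proof of Proposition \ref{estimationcommutateursprop}, where $[Z_j,\LL_t]$ is explicitly shown to satisfy the iterated commutator bound. Since $[Z_i,\n_{t,e_j}]=-\delta_{ij}$ and $Z_i$ commutes with the zeroth-order operators in $\D_t$, each time a factor $Z_i$ is pushed past a differential factor from $\D_t$ or from some $C_k$, the resulting correction is a polynomial $Z^{\alpha'}$ with $|\alpha'|<|\alpha|$ applied to $s$. Invoking the inductive hypothesis for these lower $|\alpha'|$ produces the sum $\sum_{\alpha'\le\alpha}\|Z^{\alpha'}s\|_{t,m}$ on the right-hand side, with a new (still finite) exponent $K$.

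\textbf{Main obstacle.} The crux of the argument is bookkeeping rather than a single hard estimate: because $\LL_t$ is second order and Proposition \ref{estimationcommutateursprop} controls iterated commutators only as maps $\sob^1_t\to\sob^{-1}_t$, a copy of $R$ must be inserted between every two consecutive commutator factors in order to convert the $\sob^{-1}_t$ output back into a $\sob^1_t$ input; the structural formula \eqref{structure} is what guarantees that this is possible and that the number of resolvents required stays bounded by a function of $m$ and $|\alpha|$ alone, so that $K$ is finite. A secondary technical point is that a priori $Rs$ may fail to lie in the pointwise domain of all the $Q$'s we wish to apply, so I would first establish the estimate for $s\in\smooth_c(B_0,\mathcal{E}_{B,x_0})$ and then pass to the limit by density in the appropriate Sobolev space.
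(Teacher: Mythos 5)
Your proposal follows the same strategy as the paper: commute the factors from $\D_t\cup\{Z_i\}$ past the resolvent, expand iterated commutators with $(\lambda-\LL_t)^{-1}$ as alternating products of resolvents and iterated commutators with $\LL_t$, and then invoke Propositions \ref{estimationsresoprop} and \ref{estimationcommutateursprop}. The paper executes this as a single combinatorial decomposition rather than a double induction, but the two routes are equivalent; your reference to the structure formula \eqref{structure} as the thing guaranteeing the process terminates is exactly the right point.

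There is, however, a bookkeeping slip that is not merely cosmetic. The decomposition you write, $R\,C_1\,R\,C_2\cdots R\,C_r\,Q'$, is missing a resolvent: the iterated commutator $[Q_{j_1},[\dots[Q_{j_r},R]\dots]]$ factors as $R\,F_1\,R\cdots F_r\,R$ -- it begins \emph{and ends} with $R$ -- so the correct output is $R\,C_1\,R\cdots R\,C_r\,R\,Q'$, matching the paper's \eqref{forme-des-com}. Because of the missing $R$, your claim that the chain $R\,C_1\,R\cdots R\,C_r$ maps $\sob^0_t\to\sob^0_t$ does not follow from Propositions \ref{estimationsresoprop} and \ref{estimationcommutateursprop}: those give $C_r:\sob^1_t\to\sob^{-1}_t$ and $R:\sob^{-1}_t\to\sob^1_t$, so $C_r$ cannot accept mere $\sob^0_t$ input. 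Relatedly, your statement that $Q'\in\D_t^{\le m}$ and that $Q'$ sends $\sob^m_t\to\sob^0_t$ with norm $\|s\|_{t,m}$ fails in the term with no commutators at all, where $Q'=Q_1\cdots Q_{m+1}$ has $m+1$ factors. The fix is the one the paper uses implicitly: keep the trailing $R$, so the chain maps $\sob^{-1}_t\to\sob^1_t$, and estimate $\|Q'\,Z^{\alpha'}s\|_{t,-1}\le C\|Z^{\alpha'}s\|_{t,m}$ by transferring one factor of $Q'$ onto the test function in the $\sob^1_t$ pairing defining the $\|\cdot\|_{t,-1}$ norm (using \eqref{adjointdenati} for the adjoint of $\n_{t,e_i}$). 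Once that trailing $R$ and the derivative-transfer step are inserted, your argument closes and coincides with the paper's.
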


\begin{proof}
Let $Q_1,\dots,Q_m \in \mathcal{D}_t$ and $Q_{m+1},\dots,Q_{m+|\alpha|} \in \left\{ Z_i\right\}_{i=1}^{2n}$. Then we can express the operator $Q_1\dots Q_{m+|\alpha|}(\lambda-\LL_{t})^{-1}$  as a linear combination of operators of the type
\begin{equation}
[Q_1,[Q_2,\dots[Q_\ell,(\lambda-\LL_{t})^{-1}]\dots]] Q_{\ell+1} \dots Q_{m+|\alpha|} \quad \text{with} \quad \ell \leq m+|\alpha|.
\end{equation}

We denote by $\FF_t$ the family of operator $\FF_t = \{ [Q_{j_1},[Q_{j_2},\dots[Q_{j_k},\LL_{t}]\dots]]\}$. Then any commutator $[Q_1,[Q_2,\dots[Q_\ell,(\lambda-\LL_{t})^{-1}]\dots]]$ can be expressed as a linear combination operators of the form
\begin{equation}
\label{forme-des-com}
(\lambda-\LL_{t})^{-1}F_1 (\lambda-\LL_{t})^{-1}F_2 \dots F_{\ell}(\lambda-\LL_{t})^{-1} \quad \text{with} \quad F_j \in  \FF_t.
\end{equation}
Moreover, by Proposition \ref{estimationcommutateursprop}, the norm $\|\cdot\|_t^{1,-1}$ of any element of $\FF_t$ is uniformly bounded by $C$. As a consequence, using Proposition \ref{estimationsresoprop} we see that there is $C>0$ and $N\in \N$ such that the $\|\cdot\|_t^{0,1}$-norm of operators in \eqref{forme-des-com} is bounded by $C(1+|\lambda|^2)^N$. Thus, Proposition \ref{norme(m,m+1)resolvante} holds.
\end{proof}

Let $e^{-\LL_{t}}(Z,Z')$ be the smooth kernel of the operator $e^{-\LL_{t}}$ with respect to $dv_{TB}(Z')$. Let $\pi_{M_G} \colon TB\times_{M_G} TB\to M_G$ be the projection from the fiberwise product $TB\times_{M_G} TB$ onto $M_G$ (here we should rather write $TB|_{M_G}$ but we drop the subscript to simplify the notations). As $\LL_t$ depends on the parameter $x_0\in M_G$, then $e^{-\LL_{t}}(\cdot,\cdot)$ can be viewed as a section of $\pi_{M_G}^*\left(\End(\mathcal{E}_B)\right)$ over $TB\times_{M_G} TB$. 

Let $\n^{\pi_{M_G}^*\End(\mathcal{E}_B)}$  be the  connection on $\pi_{M_G}^*\End(\mathcal{E}_B)$ induced by $\n^{\mathcal{E}_B}$. Then  $\n^{\pi_{M_G}^*\End(\mathcal{E}_B)}$, $h^E$ and $g^{TM}$ induce naturally a $\mathscr{C}^m$-norm for the parameter $x_0\in M_G$ on sections of $\pi_{M_G}^*\left(\End(\mathcal{E}_B)\right)$.

As above, we will decompose any $Z\in T_{x_0}B$ as $Z=Z^0+Z^\perp$, with $Z_0\in T_{x_0}M_G$ and $Z^\perp\in N_{G,x_0}$.

\begin{thm}
\label{estimationderiveesnoyaudeLpu}
There exists $C'>0$ such that for any $m,m',m'',r\in \N$ and $u_0>0$, there is $C>0$ such that for any $t\in\, ]0,t_0]$, $u\geq u_0$ and $Z,Z'\in T_{x_0}B=B_0$ 
\begin{multline}
\label{estimationderiveesnoyaudeLpu:eq}
\sup_{|\alpha|,|\alpha'|\leq m} \big(1+|Z^\perp|+|Z'^\perp|  \big)^{m''} \left| \frac{\partial^{|\alpha|+|\alpha'|}}{\partial {Z}^{\alpha}\partial Z'^{\alpha'}} \derpar{^r}{t^r}e^{-u\LL_{t}}(Z,Z')\right|_{\mathscr{C}^{m'}(M_G)} \\
\leq C\big(1+|Z^0|+|Z'^0|  \big)^{2(n+r+m'+1)+m}\exp\Big(4C_2 u -\frac{C'}{u}|Z-Z'|^2\Big),
\end{multline}
where $|\cdot|_{\mathscr{C}^{m'}(M)}$ denotes the $\mathscr{C}^m$-norm for the parameter $x_0\in M_G$.
\end{thm}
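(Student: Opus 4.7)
The plan is to adapt the Bismut--Lebeau analytic localization technique, following the heat-kernel arguments of \cite[Sect.~1.6]{ma-marinescu} but incorporating polynomial decay in the normal direction in the spirit of \cite{ma-zhang}. The starting point is the contour integral representation
\[
e^{-u\LL_{t}} = \frac{1}{2\pi i}\int_{\Gamma} e^{-u\lambda}(\lambda-\LL_{t})^{-1}\,d\lambda,
\]
with $\Gamma$ as in Figure~\ref{contour-Gamma}. Differentiating $k$ times in $\lambda$ under the integral and invoking Proposition~\ref{estimationsresoprop} immediately yields $\|\LL_{t}^{k}\,e^{-u\LL_{t}}\|_t^{0,0}\leq C_k\,e^{4C_2 u}$ for $u\geq u_0$. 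Iterating Proposition~\ref{norme(m,m+1)resolvante} together with the commutator estimate of Proposition~\ref{estimationcommutateursprop}, I then obtain, for any $Q_1,\dots,Q_m \in \D_t$ and any multi-index $\beta$,
\[
\bigl\|Z^{\beta}Q_1\cdots Q_m\,e^{-u\LL_{t}}\bigr\|_t^{0,0} \leq C\,(1+|Z^0|)^{|\beta|+N}\,e^{4C_2 u},
\]
the polynomial growth in $|Z^0|$ arising because the coordinates $Z_i^0$ are not themselves in $\D_t$ but satisfy $[\LL_{t},Z_i^0]\in \D_t + O(t)$.

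The polynomial decay $(1+|Z^\perp|)^{-m''}$ will follow from the key observation that $\mu_0$ vanishes on $G\times\R^{2n-2d}\times\{0\}$ and that $d\mu_0$ is non-degenerate in the normal direction $JTY$ (Lemma~\ref{bL-ND} applied in the model geometry). A Taylor expansion at $Z^\perp=0$ then gives, uniformly in $t\in(0,t_0]$,
\[
\tfrac{1}{t}\bigl\langle\wt{\mu}_0,f_{0,l}\bigr\rangle(tZ) = \bigl\langle(\n_{Z^\perp}\wt{\mu}_0)\bigr|_{(g,Z^0,0)}, f_{0,l}\bigr\rangle + O\bigl(t(1+|Z^\perp|^2)\bigr),
\]
so that a product of $m''$ such operators from $\D_t$ controls $(1+|Z^\perp|)^{m''}$ at the $L^2$ level, modulo lower-order terms that are absorbed into the growth in $|Z^0|$. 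Converting the resulting $L^2$-Sobolev estimates into the pointwise $\mathscr{C}^{m'}$ bound \eqref{estimationderiveesnoyaudeLpu:eq} is then achieved by Sobolev embedding on balls of unit radius around $Z$ and $Z'$, with Sobolev order exceeding $n+m'+1$; the combined count of derivatives and weights is what produces the exponent $2(n+r+m'+1)+m$.

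Finally, the Gaussian factor $\exp(-C'|Z-Z'|^2/u)$ is produced by an Agmon-type weighted energy estimate: conjugating the heat equation by $e^{\alpha\rho_{Z'}}$, where $\rho_{Z'}$ is a smooth bounded approximation of $|\cdot-Z'|$ and $\alpha\sim|Z-Z'|/u$, replaces $\LL_t$ by a perturbation of size $O(\alpha)+O(\alpha^2)$, which after integration over time $u$ produces the Gaussian factor. The $t$-derivatives $\partial_t^r$ are handled by differentiating the contour integral: $\partial_t\LL_t$ is a second-order operator with polynomial-in-$Z$ coefficients (each differentiation costing one factor of $(1+|Z|)^2$ in the final bound), and the $\mathscr{C}^{m'}$-dependence on $x_0\in M_G$ is treated in parallel by differentiating the geometric data entering $\LL_t$ and applying Propositions~\ref{estimationsresoprop}--\ref{norme(m,m+1)resolvante} to the parameter-differentiated operators. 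The main obstacle I anticipate is the combination of the Agmon weighting with the $(1+|Z^\perp|)^{m''}$ control: one has to verify that the commutator $\bigl[e^{\alpha\rho_{Z'}},\,\tfrac{1}{t}\langle\wt{\mu}_0,f_{0,l}\rangle(tZ)\bigr]$ stays uniformly bounded as $t\to 0$, so that the normal-direction weight and the Gaussian weight can be superimposed without destroying each other's estimates.
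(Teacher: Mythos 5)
Your overall architecture matches the paper's: contour integral representation, the uniform $\|\cdot\|_t$-Sobolev machinery (Propositions~\ref{estimationsresoprop}--\ref{norme(m,m+1)resolvante}), the lower bound $\big|\tfrac{1}{t}\wt{\mu}_0\big|^2(tZ)\gtrsim|Z^\perp|^2$ to trade powers of $Z^\perp$ against elements of $\D_t$, the commutator trick to push $Z$-weights and $\D_t$ elements through $A^{\boldrm{k}}_{\boldrm{r}}(\lambda,t)$ for the $t$-derivatives, and Sobolev embedding to pass to pointwise bounds. All of that is essentially what the paper does.

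The one genuine point of departure is the mechanism for the Gaussian factor $\exp(-C'|Z-Z'|^2/u)$. You propose an Agmon-type conjugation $\LL_{t,\alpha}=e^{-\alpha\rho_{Z'}}\LL_t\,e^{\alpha\rho_{Z'}}$; the paper instead follows Bismut \cite[Thm.~11.14]{MR1316553} (see also \cite[Thm.~4.2.5]{ma-marinescu}) and uses \emph{finite propagation speed of the wave equation} together with the cut-off functional calculus $\wt{K}_{u,\varsigma}(\LL_t)$ of \eqref{def-Kvarsigma}--\eqref{Kvarsigma-et-exp}, then interpolates the resulting estimate with \eqref{cas-r=m'=0-presque} to optimize the exponential constant. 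The finite-propagation approach is better adapted here because $\wt{K}_{u,\varsigma}(\LL_t)$ is again a holomorphic functional calculus applied to the \emph{same} self-adjoint $\LL_t$, so every resolvent/commutator estimate already proved transfers verbatim; the Agmon route forces you to re-prove Propositions~\ref{estimationLupprop}--\ref{norme(m,m+1)resolvante} for the non-self-adjoint operator $\LL_{t,\alpha}$ with constants tracked in $\alpha$ up to $\alpha\sim|Z-Z'|/u$. That is feasible but is extra work the paper avoids. One correction to your stated concern: the commutator $[e^{\alpha\rho_{Z'}},\,\tfrac{1}{t}\langle\wt{\mu}_0,f_{0,l}\rangle(tZ)]$ vanishes identically, since both are multiplication operators; the nontrivial interaction is between $e^{\alpha\rho_{Z'}}$ and the derivatives $\n_{t,e_i}$, which creates first-order perturbations of size $O(\alpha)$ whose effect on the $\|\cdot\|_{t,1}$-coercivity is the actual thing you need to control.
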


\begin{proof}
By \eqref{estimationsreso}, we know that for $k\in \N^*$,
\begin{equation}
\label{e(-Lup)=integralledecontour}
e^{-u\LL_{t}} = \frac{(-1)^{k-1}(k-1)!}{2i\pi u^{k-1}} \int_\Gamma e^{-u\lambda}(\lambda-\LL_{t})^{-k}d\lambda.
\end{equation}

Then for $m\in \N$, we know from Proposition \ref{norme(m,m+1)resolvante} that for $Q\in \cup_{\ell=1}^m\mathcal{D}_t^\ell$, there are $C_m>0$ and $M\in \N$ such that for $\lambda\in \Gamma$,
\begin{equation}
\label{normeQxresolvante}
\|Q(\lambda-\LL_{t})^{-m}\|_t^{0,0} \leq C_m (1+|\lambda|^2)^M.
\end{equation}
Moreover, taking the adjoint of \eqref{normeQxresolvante}, we deduce
\begin{equation}
\label{normeresolvantexQ}
\|(\lambda-\LL_{t})^{-m}Q\|_t^{0,0} \leq C_m (1+|\lambda|^2)^M.
\end{equation}

From \eqref{e(-Lup)=integralledecontour}, \eqref{normeQxresolvante} and \eqref{normeresolvantexQ}, we have for $Q,Q'\in \cup_{\ell=1}^m\mathcal{D}_t^\ell$:
\begin{equation}
\label{Qe(Lup)Q'}
\left\| Qe^{-u\LL_{t}}Q' \right \|_t^{0,0} \leq C_me^{2C_2u}.
\end{equation}

Let $\|\cdot\|_m$ be the usual Sobolev norm on $\smooth(T_{x_0}B,\mathcal{E}_{x_0})$ induced by $h^{\mathcal{E}_{x_0}}$ and the volume form $dv_{TX}(Z)$:
\begin{equation}
||s||_{m}^2=\sum_{\ell\leq m} \sum_{i_1,\dots,i_\ell}  ||\n_{e_{i_1}}\cdots\n_{e_{i_\ell}}s||_{0}^2.
\end{equation}
Then by \eqref{nablat} and \eqref{def-norme(t,k)},  for any $m\in \N$ there exists $C'_m>0$ such that for $s \in \smooth(T_{x_0}B,\mathcal{E}_{x_0})$ with support in $B^{T_{x_0}B}(0,q)$ and $t\in [0,1]$,
\begin{equation}
\label{equivalenceSob}
\frac{1}{C'_m(1+q)^m} \|s\|_{t,m} \leq \|s\|_m \leq C'_m(1+q)^m \|s\|_{t,m}.
\end{equation}

From \eqref{Qe(Lup)Q'}, \eqref{equivalenceSob} and Sobolev inequalities (for $\|\cdot\|_m$) we find that if $Q,Q'\in \cup_{\ell=1}^m\mathcal{D}_t^\ell$, then
\begin{equation}
\label{estimation-chaleure-sans-Z}
\sup_{|Z|,|Z'|\leq q}  \left| Q_ZQ'_{Z'} e^{-u\LL_{t}}(Z,Z')\right|\leq C(1+q)^{2n+2}e^{2C_2u}.
\end{equation}

Moreover, by Lemma \ref{bL-ND} and \eqref{muM0}, \eqref{mu(K)} and \eqref{muM0-loin}, we have
\begin{equation}
\label{minoration-norme-mutilde0}
\sum_{l=1}^d \Big| \frac{1}{t} \langle \wt{\mu}_0,f_{0,l}\rangle(tZ)\Big|^2 = \Big| \frac{1}{t}\wt{\mu}_0\Big|^2_{g^{TY}}(tZ) \geq C|Z^\perp|^2.
\end{equation}
Thus, \eqref{nablat}, \eqref{estimation-chaleure-sans-Z} and \eqref{minoration-norme-mutilde0} imply \eqref{estimationderiveesnoyaudeLpu:eq} with the exponential $e^{2C_2u}$ for the case where $r=m'=0$ and $C'=0$, i.e., for any $m,m''\in \N$, there is $C>0$ such that for any $t\in\, ]0,t_0]$, $Z,Z'\in T_{x_0}B=B_0$ 
\begin{multline}
\label{cas-r=m'=0-presque}
\sup_{|\alpha|,|\alpha'|\leq m} \big(1+|Z^\perp|+|Z'^\perp|  \big)^{m''} \left| \frac{\partial^{|\alpha|+|\alpha'|}}{\partial {Z}^{\alpha}\partial Z'^{\alpha'}} e^{-u\LL_{t}}(Z,Z')\right| \\
\leq C\big(1+|Z^0|+|Z'^0|  \big)^{2n+2+m}\exp(2C_2u).
\end{multline}

To obtain the right exponential factor in the right hand side of \eqref{estimationderiveesnoyaudeLpu:eq}, we proceed as in the proof of \cite[Thm. 11.14]{MR1316553} (see also \cite[Thm. 4.2.5]{ma-marinescu}).

Recall that the function $f$ is defined in \eqref{def-f}. For $\varsigma>1$ and $a\in \C$, set
\begin{equation}
\label{def-Kvarsigma}
K_{u,\varsigma}(a) = \int_\R  e^{iv\sqrt{2u}a}\exp(-v^2/2)\left(1-f\big(\sqrt{2u}v/\varsigma\big)\right)\frac{dv}{\sqrt{2\pi}}.
\end{equation}
Then there are $C'',C_1>0$ such that for any $c>0$ and $m,m'\in \N$, there is $C>0$ such that for $u\geq u_0$, $\varsigma>1$ and $a\in \C$ with $|\Im(a)|\leq c$, we have
\begin{equation}
\label{borne-Kvarsigma}
|a|^m |K_{u,\varsigma}^{(m')}(a)|\leq C\exp \Big(C''c^2u -\frac{C_1}{u}\varsigma^2\Big).
\end{equation}

For $c>0$, let $V_c$ be the image of $\{a \in \C \: : \: |\Im(a)|\leq c\}$ by the map $a\mapsto a^2$, that is 
\begin{equation}
V_c = \Big\{ \lambda \in \C \: : \: \Re(\lambda)\geq \frac{1}{4c^2} \Im(\lambda)-c^2\Big\}.
\end{equation}
Then the contour $\Gamma$ of Figure \ref{contour-Gamma} satisfies $\Gamma \subset V_c$ for $c$ large enough.

As $K_{u,\varsigma}$ is even, there exist a  unique holomorphic function $\wt{K}_{u,\varsigma}$ such that $\wt{K}_{u,\varsigma}(a^2)=K_{u,\varsigma}(a)$. By \eqref{borne-Kvarsigma}, we have for $\lambda \in V_c$
\begin{equation}
\label{borne-Kvarsigma-tilde}
|\lambda|^m |\wt{K}_{u,\varsigma}^{(m')}(\lambda)|\leq C\exp \Big(C''c^2u -\frac{C_1}{u}\varsigma^2\Big).
\end{equation}

Using the finite propagation speed of the wave equation and \eqref{def-Kvarsigma}, we know that there exists $c'>0$ such that for any $\varsigma>1$
\begin{equation}
\label{Kvarsigma-et-exp}
\wt{K}_{u,\varsigma}(\LL_t)(Z,Z')=e^{-u\LL_t}(Z,Z') \qquad \text{if} \quad |Z-Z'|\geq c'\varsigma.
\end{equation}

From \eqref{borne-Kvarsigma-tilde}, we see that for $k\in \N$, there is a unique holomorphic function $\wt{K}_{u,\varsigma,k}$ defined on a neighborhood of $V_c$ which satisfies the same estimates as $\wt{K}_{u,\varsigma}$ in  \eqref{borne-Kvarsigma-tilde} and
\begin{equation}
\frac{\wt{K}_{u,\varsigma,k}^{(k-1)}(\lambda)}{(k-1)!} = \wt{K}_{u,\varsigma}(\lambda).
\end{equation}
In particular, as in \eqref{e(-Lup)=integralledecontour}, we have
\begin{equation}
\wt{K}_{u,\varsigma}(\LL_t)=\frac{1}{2i\pi}\int_{\Gamma}\wt{K}_{u,\varsigma,k}(\lambda-\LL_t)^{-k}d\lambda.
\end{equation}

Using \eqref{Qe(Lup)Q'} and proceeding as in \eqref{equivalenceSob}-\eqref{cas-r=m'=0-presque}, we find
\begin{multline}
\sup_{|\alpha|,|\alpha'|\leq m} \big(1+|Z^\perp|+|Z'^\perp|  \big)^{m''} \left| \frac{\partial^{|\alpha|+|\alpha'|}}{\partial {Z}^{\alpha}\partial Z'^{\alpha'}} \wt{K}_{u,\varsigma}(Z,Z')\right| \\
\leq C\big(1+|Z^0|+|Z'^0|  \big)^{2n+2+m}\exp\Big(C''c^2 u -\frac{C_1}{u}\varsigma^2\Big).
\end{multline}
For $Z\neq Z'$, we set $\varsigma >1$ such that $\big|\varsigma-\frac{1}{c'}|Z-Z'|\big|<1$ in the previous estimate and get
\begin{multline}
\label{cas-r=m'=0-avec-Kvarsigma}
\sup_{|\alpha|,|\alpha'|\leq m} \big(1+|Z^\perp|+|Z'^\perp|  \big)^{m''} \left| \frac{\partial^{|\alpha|+|\alpha'|}}{\partial {Z}^{\alpha}\partial Z'^{\alpha'}} \wt{K}_{u,\varsigma}(Z,Z')\right| \\
\leq C\big(1+|Z^0|+|Z'^0|  \big)^{2n+2+m}\exp\Big(C''c^2 u -\frac{C_1}{c'^2u}|Z-Z'|^2\Big).
\end{multline}
Now, take $\delta_1=\frac{C''c^2+2C_2}{C''c^2+4C_2}$, then from \eqref{cas-r=m'=0-presque}${}^{\delta_1}\times$\eqref{cas-r=m'=0-avec-Kvarsigma}${}^{1-\delta_1}$ and \eqref{Kvarsigma-et-exp} (and from \eqref{cas-r=m'=0-presque} if $Z=Z'$), we get \eqref{estimationderiveesnoyaudeLpu:eq} for $r=m'=0$, i.e., for all $Z,Z'\in T_{x_0}B$
\begin{multline}
\label{cas-r=m'=0}
\sup_{|\alpha|,|\alpha'|\leq m} \big(1+|Z^\perp|+|Z'^\perp|  \big)^{m''} \left| \frac{\partial^{|\alpha|+|\alpha'|}}{\partial {Z}^{\alpha}\partial Z'^{\alpha'}} e^{-u\LL_{t}}(Z,Z')\right| \\
\leq C\big(1+|Z^0|+|Z'^0|  \big)^{2n+2+m}\exp\Big(4C_2 u -\frac{C'}{u}|Z-Z'|^2\Big).
\end{multline}

We now turn to the case $r\geq 1$. By \eqref{e(-Lup)=integralledecontour}, we have
\begin{equation}
\label{der-e(-Lup)=integralledecontour}
\derpar{^{r}}{t^{r}}e^{-u\LL_{t}} = \frac{(-1)^{k-1}(k-1)!}{2i\pi u^{k-1}} \int_\Gamma e^{-\lambda} \derpar{^{r}}{t^{r}}(\lambda - \LL_{t})^{-1}d\lambda.
\end{equation}
 For $k,q \in \N^*$, set
\begin{equation}
I_{k,r} = \Big\{ (\boldrm{k},\boldrm{r})=(k_i,r_i) \in (\N^*)^{j+1}\times (\N^*)^{j} \: : \: \sum_{i=0}^j k_i = k+j \, , \: \sum_{i=1}^j r_i = r \Big\}.
\end{equation}
For $(\boldrm{k},\boldrm{r}) \in I_{k,r}$, $\lambda \in \Gamma$, $t>0$ set
\begin{equation}
\label{def-Ark(lambda,t,v)}
A_\boldrm{r}^\boldrm{k}(\lambda,t) = (\lambda-\LL_t)^{-k_0}\derpar{^{r_1}\LL_t}{t^{r_1}}(\lambda-\LL_t)^{-k_1} \cdots \derpar{^{r_j}\LL_t}{t^{r_j}}(\lambda-\LL_t)^{-k_j}.
\end{equation}
Then there exist $a_\boldrm{r}^\boldrm{k} \in \R$ such that
\begin{equation}
\label{derivee-resolvante^m-avec-A_r^k}
\derpar{^{r}}{v^{r}}(\lambda-\LL_t)^{-k} = \sum_{(\boldrm{k},\boldrm{r}) \in I_{k,q}} a_\boldrm{r}^\boldrm{k}A_\boldrm{r}^\boldrm{k}(\lambda,t).
\end{equation}

We claim that for any $m\in \N$, $k>2(m+r+1)$ and $Q,Q'\in \cup_{\ell=1}^m\mathcal{D}_t^\ell$, there exist $C>0$, $N\in \N$ such that for $\lambda\in \Gamma$
 \begin{equation}
 \label{QA_k^rQ'-eq}
\big\| QA_\boldrm{r}^\boldrm{k}(\lambda,t) Q's\big\|_{0} \leq C(1+|\lambda|)^N\sum_{|\beta|\leq 2r} \|Z^\beta s\|_0.
\end{equation}

Indeed, we know by \eqref{formule-Lt} that $\derpar{^r}{t^r} \LL_t$ is a combination of
\begin{equation}
\label{termes-possible}
 \Big(\derpar{^{r_1}}{v^{r_1}}g^{ij}(tZ)\Big), \quad \Big( \derpar{^{r_2}}{t^{r_2}}\n_{t,e_i} \Big), \quad \derpar{^{r_1}}{t^{r_1}}\theta(tZ), \quad\derpar{^{r_1}}{t^{r_1}}t\langle \tilde{\mu}^{\E_{0,p}},f_{0,l}(tZ)\rangle,
\end{equation}
where  $\theta$ runs over the functions $r^X$, etc., appearing in \eqref{formule-Lt}.

Now, if $f=g^{ij}$ or $f=\theta$ in  \eqref{termes-possible} (resp. $f=\n_{t,e_i}$ or $f=t\langle \tilde{\mu}^{\E_{0,p}},f_{0,l}(tZ)\rangle$), then for $r_1\geq 1$, $\derpar{^{r_1}}{v^{r_1}}f(tZ)$ is a function of the type $g(tZ)Z^\beta$ where $|\beta|\leq r_1$ (resp. $r_1+1$) and  $g(Z)$ and its derivatives in $Z$ are uniformly bounded for $Z\in \R^{2n}$. 

Let $\mathscr{F}'_{t}$ be the family of operators of the form
\begin{equation}
\mathscr{F}'_{t} = \big\{ [f_{j_1}Q_{j_1},[f_{j_2}Q_{j_2}, \dots [f_{j_m}Q_{j_m},\LL_t]\dots]] \big\},
\end{equation}
where $f_{j_i}$ is smooth and bounded (with its derivatives) and $Q_{j_i} \in \mathcal{D}_t\cup\{Z_l\}_{l=1}^{2n-d}$.

We will now deal with the operator $A_\boldrm{r}^\boldrm{k}(\lambda,t) Q'$. First, we move all the terms $Z^\beta$ in the terms $g(tZ)Z^\beta$ (defined above) to the right-hand side of this operator. To do so, we use the same commutator trick as in the proof of Theorem \ref{norme(m,m+1)resolvante}, that is we perform the commutations once at a time with each $Z_i$ (and not directly with $Z^\beta$, $|\beta|>1$). Then we obtain that $A_\boldrm{r}^\boldrm{k}(\lambda,t) Q'$ is of the form $ \sum_{|\beta|\leq 2r}L_{t,\beta}Q''_\beta Z^\beta$ where $Q''_\beta$ is obtained from $Q'$ and its commutation with $Z^\beta$. Next, we move all the terms $\n_{t,e_i}$ and $\langle \frac{1}{t}\tilde{\mu}^{\E_{0,p}},f_{0,l}(tZ)\rangle$  in $\derpar{^r}{t^r} \LL_t$ to the right-hand side of the operators $L_{t,\beta}$. Then as in the proof of  Theorem  \ref{norme(m,m+1)resolvante}, we finally get that $QA_\boldrm{r}^\boldrm{k}(\lambda,t) Q'$ is of the form $ \sum_{|\beta|\leq 2r}\LL_{t,\beta} Z^\beta$, where $\LL_{t,\beta}$ is a linear combination of operators of the type
\begin{equation}
\label{model-L_t,beta^v}
Q(\lambda-\LL_t)^{-k'_0}R_1 (\lambda-\LL_t)^{-k'_1} R_2 \cdots R_{l'}(\lambda-\LL_t)^{-k'_{l'}}Q'''Q'',
\end{equation}
where $\sum_j k'_j = k+l'$, $R_j \in \mathscr{F}_{t}'$, $Q'''\in \cup_{\ell=1}^{2r}\mathcal{D}_t^\ell$ and $Q''\in\cup_{\ell=1}^m\mathcal{D}_t^\ell$ is obtained from $Q'$ and its commutation with $Z^\beta$. Since $k>2(m+r+1)$, we can use Proposition \ref{norme(m,m+1)resolvante} and the arguments leading to \eqref{normeQxresolvante} and \eqref{normeresolvantexQ} in order to split the operator in \eqref{model-L_t,beta^v} into two parts:
\begin{multline}
Q(\lambda-\LL_t)^{-k'_0}R_1 (\lambda-\LL_t)^{-k'_1} R_2 \cdots R_{i}(\lambda-\LL_t)^{-k''_{i}} \times \\
(\lambda-\LL_t)^{-(k'_i-k''_{i})}R_{i+1} \cdots R_{l'}(\lambda-\LL_t)^{-k'_{l'}}Q'''Q'',
\end{multline}
such that the $\|\cdot\|_t^{0,0}$-norm each part is bounded by $C(1+|\lambda|^2)^N$. This conclude the proof of  \eqref{QA_k^rQ'-eq}.

By \eqref{der-e(-Lup)=integralledecontour}, \eqref{derivee-resolvante^m-avec-A_r^k} and \eqref{QA_k^rQ'-eq}, we get \eqref{estimationderiveesnoyaudeLpu:eq} for $m'=0$
using a similar reasoning that for \eqref{cas-r=m'=0}.

For $m'=1$, observe that if $U \in T M_G$, then
\begin{equation}
\label{nablae(-Lup)=integralledecontour}
\nabla^{\pi_{M_G}^*\End(\mathcal{E})}_Ue^{-u\LL_{p}} = \frac{(-1)^{k-1}(k-1)!}{2i\pi u^{k-1}} \int_\Gamma e^{-\lambda}\nabla^{\pi_{M_G}^*\End(\mathcal{E})}_U(\lambda-\LL_{t})^{-k}d\lambda.
\end{equation}
Moreover, $\nabla^{\pi_{M_G}^*\End(\mathcal{E})}_U(\lambda-\LL_{t})^{-k}$ is a linear combination operators of the form
\begin{equation}
\label{decompo:nabla(lambda-Lup)^(-k)}
(\lambda-\LL_{t})^{-i_1}\big(\nabla^{\pi_{M_G}^*\End(\mathcal{E})}_U\LL_{t}\big) (\lambda-\LL_{t})^{-i_2} \dots \big(\nabla^{\pi_{M_G}^*\End(\mathcal{E})}_U\LL_{t}\big)(\lambda-\LL_{t})^{-i_\ell},
\end{equation}
and $\nabla^{\pi_{M_G}^*\End(\mathcal{E})}_U\LL_{t}$ is a differential operator with the same structure as $\LL_{t}$. In particular, $\nabla^{\pi_{M_G}^*\End(\mathcal{E})}_U\LL_{t}$ satisfies an estimates analogous to \eqref{estimationcommutateurs}. Thus, above arguments can be repeated to prove \eqref{estimationderiveesnoyaudeLpu:eq} for $m'=1$. The case $m'\geq 2$ is similar.
\end{proof}

\begin{rem}
 In the sequel, we will in fact only use Theorem \ref{estimationderiveesnoyaudeLpu} with $r=0,1$, but we prefer to state it in the general case.
\end{rem}

\begin{prop}
\label{diffdesresolvantes-prop}
There are constants $C>0$ and $M\in \N^*$ such that for $t\in [0,t_0]$ and $\lambda\in \Gamma$,
\begin{equation}
\label{diffdesresolvantes}
\left\| \big((\lambda-\LL_{t})^{-1} - (\lambda-\LL_{0}
)^{-1}\big)s \right\|_{0,0} \leq Ct (1+|\lambda|^2)^M\sum_{|\alpha|\leq3}||Z^\alpha s||_{0,0}.
\end{equation}
\end{prop}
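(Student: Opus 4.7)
The starting point is the second resolvent identity
\begin{equation*}
(\lambda-\LL_t)^{-1} - (\lambda-\LL_0)^{-1} = (\lambda-\LL_t)^{-1}(\LL_t - \LL_0)(\lambda-\LL_0)^{-1}.
\end{equation*}
The plan is to control each of the three factors and compose the bounds. First, I would refine the expansion done in the proof of Proposition~\ref{asymp-Lt} to obtain a structural description of the form
\begin{equation*}
\LL_t - \LL_0 = t \sum_{|\alpha|\leq 3} Z^\alpha \,\mathcal{R}_{\alpha,t},
\end{equation*}
where each $\mathcal{R}_{\alpha,t}$ is a differential operator of order at most $2$ in the elements of $\D_t$ whose coefficients (and all their $Z$-derivatives) are uniformly bounded for $Z\in\R^{2n-d}$, $t\in[0,t_0]$. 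The cubic weight $|\alpha|=3$ is forced by the $O(|Z|^3)$ remainder in the Taylor expansion \eqref{asymp-mutilde} of $|\wt{\mu}|^2_{g^{TY}}(tZ)$ together with the factor $1/t^2$ in \eqref{norme-tmutilde-Ep}; the other contributions (expanding $g^{ij}(tZ)$, the connection forms via \eqref{dvlptGamma}, and $(2\omega_{0,d}+\tau_0)(tZ)$) only produce weights with $|\alpha|\leq 2$.

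From this description, together with the commutator manipulations of Propositions~\ref{estimationcommutateursprop} and~\ref{norme(m,m+1)resolvante} (moving each $Z^\alpha$ past the differential part and using \eqref{adjointdenati} on the adjoints of the $\n_{t,e_i}$), one deduces the dual estimate
\begin{equation*}
\|(\LL_t - \LL_0)w\|_{t,-1} \leq Ct \sum_{|\alpha|\leq 3} \|Z^\alpha w\|_{t,1},\qquad w \in \smooth_c(B_0,\mathcal{E}_{B,x_0}).
\end{equation*}
Next, I would apply Proposition~\ref{norme(m,m+1)resolvante} to $\LL_0$ itself (its proof only uses Propositions~\ref{estimationsresoprop} and~\ref{estimationcommutateursprop}, whose analogues for $\LL_0$ are immediate because $\LL_0$ is an explicit harmonic oscillator with the same commutator structure) to obtain, for $|\alpha|\leq 3$,
\begin{equation*}
\|Z^\alpha (\lambda-\LL_0)^{-1}s\|_{t,1} \leq C(1+|\lambda|^2)^K \sum_{|\alpha'|\leq |\alpha|} \|Z^{\alpha'}s\|_{0,0},
\end{equation*}
after absorbing the discrepancy between $\|\cdot\|_{t,1}$ and the intrinsic norm for $\LL_0$, which by \eqref{nablat} differs by at most polynomial-in-$|Z|$ factors and is readily handled inside the sum over $|\alpha'|$.

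Combining the two estimates with the bound $\|(\lambda-\LL_t)^{-1}\|_t^{-1,0} \leq \|(\lambda-\LL_t)^{-1}\|_t^{-1,1} \leq C(1+|\lambda|^2)$ of Proposition~\ref{estimationsresoprop} yields
\begin{equation*}
\big\|\big((\lambda-\LL_t)^{-1} - (\lambda-\LL_0)^{-1}\big)s\big\|_{0,0} \leq Ct(1+|\lambda|^2)^{K+1}\sum_{|\alpha|\leq 3}\|Z^\alpha s\|_{0,0},
\end{equation*}
which is \eqref{diffdesresolvantes} with $M=K+1$. The main obstacle is the first step: identifying the correct structural form of $\LL_t-\LL_0$, in particular recognizing that the cubic weight in $Z$ is unavoidable and originates entirely from the quadratic moment-map potential in $\LL_0$, and carrying out the commutator manipulations so that the second-order differential nature of $\mathcal{R}_{\alpha,t}$ is dualized into the $\sob^1_t$ norm without losing a factor of $t$ or picking up extra polynomial weights beyond $|\alpha|\leq 3$.
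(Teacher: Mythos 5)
Your proposal follows the paper's proof closely: second resolvent identity, Taylor expansion of $\LL_t-\LL_0$ to isolate a factor of $t$ and the cubic weight in $Z$ (which, as you correctly identify, originates from the $O(|Z|^3)$ remainder in \eqref{asymp-mutilde} divided by $t^2$), a dual $\sob^{-1}_t$ estimate, and then the resolvent bounds of Propositions \ref{estimationsresoprop} and \ref{norme(m,m+1)resolvante}, applied also at $t=0$. One small bookkeeping remark: the paper writes the Taylor bound as $\| (\LL_{t}-\LL_{0})s\|_{t,-1}\leq Ct\sum_{|\alpha|\leq 3}\|Z^\alpha s\|_{0,1}$, with the $\LL_0$-intrinsic $\sob^1_0$-norm on the right, which plugs directly into the $t=0$ case of Proposition \ref{norme(m,m+1)resolvante}; your choice of $\|Z^\alpha w\|_{t,1}$ instead, followed by the conversion $\|\cdot\|_{t,1}\leq C\sum_{|\beta|\leq1}\|Z^\beta\cdot\|_{0,1}$, costs one extra power of $Z$, so the final right-hand side comes out as $\sum_{|\alpha|\leq4}\|Z^\alpha s\|_{0,0}$ rather than $\sum_{|\alpha|\leq3}$ --- harmless for the use in Theorem \ref{cvcenoyauLup->noyauLuinfini}, but slightly weaker than the proposition as stated (and your displayed $\sum_{|\alpha'|\leq|\alpha|}$ there should accordingly read $\sum_{|\alpha'|\leq|\alpha|+1}$).
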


\begin{proof}
 From \eqref{nablat} and \eqref{def-norme(t,k)}, for $t\in [0,1]$ and $m\in \N^*$ we find
\begin{equation}
\|s\|_{t,m} \leq C\sum_{|\alpha|\leq m} \|Z^\alpha s\|_{0,m}.
\end{equation}
Moreover, for $s,s'$ with compact support, a Taylor expansion of \eqref{formule-Lt} gives
\begin{equation}
\left | \left\langle (\LL_{t}-\LL_{0}
)s, s' \right\rangle_{t,0} \right | \leq Ct \|s'\|_{t,1} \sum_{|\alpha|\leq 3} \|Z^\alpha s \|_{0,1}.
\end{equation}
Thus,
\begin{equation}
\label{Lup-Luinfty}
\| (\LL_{t}-\LL_{0})s \|_{t,-1} \leq Ct\sum_{|\alpha|\leq 3} \|Z^\alpha s \|_{0,1}.
\end{equation}

Note that
\begin{equation}
\label{Lup-Luinfty,resolvantes}
(\lambda-\LL_{t})^{-1}-(\lambda-\LL_{0}
)^{-1} = (\lambda-\LL_{t})^{-1}(\LL_{t}-\LL_{0}
)(\lambda-\LL_{0}
)^{-1}.
\end{equation}
Moreover, Propositions \ref{estimationsresoprop}, \ref{estimationcommutateursprop} and \ref{norme(m,m+1)resolvante} still holds for $t=0$. Thus, Proposition \ref{norme(m,m+1)resolvante}, \eqref{Lup-Luinfty} and \eqref{Lup-Luinfty,resolvantes} yields to \eqref{diffdesresolvantes}.
\end{proof}

\begin{thm}
\label{cvcenoyauLup->noyauLuinfini}
There exists $C'>0$ such that for any $m,m',m''\in \N$ and $u_0>0$, there is $C>0$ such that for any $t\in\, ]0,t_0]$, $u\geq u_0$ and $Z,Z'\in B_0$ 
\begin{multline}
\label{cvcenoyauLup->noyauLuinfini:eq}
\sup_{|\alpha|,|\alpha'|\leq m} \big(1+|Z^\perp|+|Z'^\perp|  \big)^{m''} \left| \frac{\partial^{|\alpha|+|\alpha'|}}{\partial {Z}^{\alpha}\partial Z'^{\alpha'}} \left(e^{-u\LL_{t}}-e^{-u\LL_{0}
}\right)(Z,Z')\right|_{\mathscr{C}^{m'}(M_G)} \\
\leq Ct\big(1+|Z^0|+|Z'^0|  \big)^{2(n+m'+1)+m} \exp\Big(4C_2 u -\frac{C'}{u}|Z-Z'|^2\Big).
\end{multline}
\end{thm}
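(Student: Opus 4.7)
The plan is to adapt the scheme of Theorem~\ref{estimationderiveesnoyaudeLpu} to the difference $e^{-u\LL_t}-e^{-u\LL_0}$, injecting at the resolvent level the $O(t)$ bound of Proposition~\ref{diffdesresolvantes-prop}. First, I would start from the contour integral representation
\begin{equation*}
e^{-u\LL_t}-e^{-u\LL_0} = \frac{(-1)^{k-1}(k-1)!}{2i\pi u^{k-1}}\int_{\Gamma} e^{-u\lambda}\bigl[(\lambda-\LL_t)^{-k} - (\lambda-\LL_0)^{-k}\bigr]\,d\lambda
\end{equation*}
and expand the bracket via the telescoping identity
\begin{equation*}
(\lambda-\LL_t)^{-k} - (\lambda-\LL_0)^{-k} = \sum_{j=0}^{k-1} (\lambda-\LL_t)^{-(j+1)}(\LL_t-\LL_0)(\lambda-\LL_0)^{-(k-j)}.
\end{equation*}
Since Propositions~\ref{estimationsresoprop}, \ref{estimationcommutateursprop} and \ref{norme(m,m+1)resolvante} all remain valid at $t=0$, I would sandwich each summand by operators $Q\in\D_t^\ell$, $Q'\in\D_t^{\ell'}$; after moving the weights $Z^\beta$ (produced from the Taylor expansion of $\LL_t-\LL_0$ in Proposition~\ref{diffdesresolvantes-prop}) to the rightmost position using the commutator trick of Proposition~\ref{norme(m,m+1)resolvante}, this would yield
\begin{equation*}
\bigl\|Q\bigl[(\lambda-\LL_t)^{-k} - (\lambda-\LL_0)^{-k}\bigr]Q's\bigr\|_{t,0} \leq Ct(1+|\lambda|^2)^{N}\sum_{|\beta|\leq 3}\|Z^\beta s\|_{0,0}
\end{equation*}
for $k>2(m+1)$ and some $N\in\N$.

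Next, performing the contour integral and converting this operator-norm bound into a pointwise bound on kernels via the Sobolev embedding argument of \eqref{Qe(Lup)Q'}--\eqref{cas-r=m'=0-presque}, and using the moment map elements of $\D_t$ together with \eqref{minoration-norme-mutilde0} to produce the weight $(1+|Z^\perp|+|Z'^\perp|)^{-m''}$, I would obtain the statement of the theorem but with the exponential factor $e^{2C_2u}$ in place of $\exp(4C_2 u - C'|Z-Z'|^2/u)$. To insert the Gaussian decay, I would rerun the whole scheme with $\wt K_{u,\varsigma}(\LL_t)-\wt K_{u,\varsigma}(\LL_0)$ in place of $e^{-u\LL_t}-e^{-u\LL_0}$, using \eqref{borne-Kvarsigma-tilde} to gain the extra factor $\exp(C''c^2u - C_1\varsigma^2/u)$; the finite propagation speed identity \eqref{Kvarsigma-et-exp} ensures that the two kernels agree for $|Z-Z'|\geq c'\varsigma$, and setting $|\varsigma-|Z-Z'|/c'|<1$ and then interpolating as in \eqref{cas-r=m'=0-avec-Kvarsigma}--\eqref{cas-r=m'=0} would yield the desired $\exp\bigl(4C_2u-C'|Z-Z'|^2/u\bigr)$ factor.

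Finally, the derivatives in the parameter $x_0\in M_G$ accounting for the $\mathscr{C}^{m'}(M_G)$-norm are handled exactly as at the end of the proof of Theorem~\ref{estimationderiveesnoyaudeLpu}: differentiate the contour integral representation, observe that $\nabla^{\pi_{M_G}^*\End(\mathcal{E}_B)}_U(\LL_t-\LL_0)$ has the same $O(t)$ Taylor structure as $\LL_t-\LL_0$ itself, and iterate. The hard part will be the bookkeeping in the first step: the weights $Z^\beta$ with $|\beta|\leq 3$ produced by Proposition~\ref{diffdesresolvantes-prop} must be pushed past arbitrarily many resolvents and the outside operators $Q,Q'$ without destroying the prefactor $t$, and then absorbed into the polynomial factor $(1+|Z^0|+|Z'^0|)^{2(n+m'+1)+m}$ of the claim by means of the same commutator/Sobolev machinery already developed in the proof of Theorem~\ref{estimationderiveesnoyaudeLpu}.
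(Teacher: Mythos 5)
Your proposed route is genuinely different from the paper's, and while it is plausibly workable, it bypasses what the $r$ parameter in Theorem~\ref{estimationderiveesnoyaudeLpu} was set up to do, and it introduces a subtle bookkeeping issue that the paper's route avoids entirely.

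The paper does not attack the resolvent difference directly. Instead it proceeds in two steps. First, it uses Proposition~\ref{diffdesresolvantes-prop} and a mollification argument (convolving both kernels with a bump $\phi_\nu$ and balancing $\nu$ against $t$ via $\nu=t^{1/(2n-d+1)}$) to obtain only the qualitative conclusion $e^{-u\LL_t}|_{t=0}=e^{-u\LL_0}$ with a weak $t^{1/(2n-d+1)}$ rate — not the $O(t)$ rate. Second, and this is the point you miss, it applies the fundamental theorem of calculus $G(t)-G(0)=\int_0^t G'(s)\,ds$ and bounds $G'(s)=\partial_s e^{-u\LL_s}(Z,Z')$ by the $r=1$ case of Theorem~\ref{estimationderiveesnoyaudeLpu}, whose hypotheses and proof are entirely at a single time $s$. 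The $r\geq 1$ cases of that theorem exist precisely so that this integration step is available here; the $Ct$ factor and the Gaussian decay then come for free, with no new resolvent estimates.

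Your telescoping scheme $(\lambda-\LL_t)^{-k}-(\lambda-\LL_0)^{-k}=\sum_j(\lambda-\LL_t)^{-(j+1)}(\LL_t-\LL_0)(\lambda-\LL_0)^{-(k-j)}$ composes resolvents taken at two different times, which forces you to pass repeatedly between the Sobolev scales $\sob^m_t$ and $\sob^m_0$. The only comparison available (used in the proof of Proposition~\ref{diffdesresolvantes-prop}) is $\|s\|_{t,m}\leq C\sum_{|\alpha|\leq m}\|Z^\alpha s\|_{0,m}$, which injects further polynomial weights every time you cross from a $(\lambda-\LL_t)^{-1}$ factor to a $(\lambda-\LL_0)^{-1}$ factor. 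You acknowledge the weight bookkeeping is delicate but ascribe the difficulty only to pushing the $|\beta|\leq 3$ Taylor weights past resolvents; the scale-mixing is an independent and arguably harder source of polynomial blow-up, and you would also need to verify that the resulting $Z^\beta$ factors only involve $Z^0,Z'^0$ (or can be traded against extra moment-map operators from $\D_t$ via~\eqref{minoration-norme-mutilde0}) so that the normal-direction weight $(1+|Z^\perp|+|Z'^\perp|)^{m''}$ survives. The paper's proof never mixes scales because every operator appearing in the $r=1$ estimate of Theorem~\ref{estimationderiveesnoyaudeLpu} lives at the same time $t$. I would recommend switching to the paper's FTC route, which reuses the heavy lifting of Theorem~\ref{estimationderiveesnoyaudeLpu} rather than redoing it.
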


\begin{proof}
Let $\mathcal{B}_q=B^{T_{x_0}B}(0,q)$. Let $\|s\|^2_{\mathcal{B}_q}=\int_{|Z|\leq q} |s|_{h^{\mathcal{E}_{x_0}}}^2dv_{TX}(Z)$, and let $J_{q,x_0} = L^2(\mathcal{B}_q,\mathcal{E}_{B,x_0})$. If $A$ is a bounded operator on $J_{q,x_0}$, we denote its operator norm by $\|A\|_{\mathcal{B}_q}$. By \eqref{e(-Lup)=integralledecontour} and \eqref{diffdesresolvantes}, we know that there is  $C'>0$ and $N,M\in \N$ such that for $t\in \, ]0,1]$, 
\begin{equation}
\label{norme_B}
\begin{aligned}
\left\| e^{-u\LL_{t}}-e^{-u\LL_{0}}\right\|_{\mathcal{B}_q} &\leq \frac{1}{2\pi} \int_\Gamma |e^{-u\lambda}| \left\| (\lambda-\LL_{t})^{-1}-(\lambda-\LL_{0}
)^{-1} \right\|_{\mathcal{B}_q}d\lambda \\
& \leq Ct \int_\Gamma e^{-u\mathrm{Re}(\lambda)} (1+|\lambda|^2)^M(1+q)^N d\lambda \leq C't(1+q)^N.
\end{aligned}
\end{equation}

Let $\phi \colon T_{x_0}B\to [0,1]$ be a smooth function with compact support, equal to 1 near 0 and such that $\int_{T_{x_0}B}\phi(Z) dv_{TX}(Z)=1$. Let $\nu\in ]0,1]$. By the proof of Theorem \ref{estimationderiveesnoyaudeLpu}, we see that $e^{-u\LL_{0}
}$ satisfies an  inequality similar to \eqref{estimationderiveesnoyaudeLpu:eq}. By Theorem \ref{estimationderiveesnoyaudeLpu}, there exists $C>0$ such that for $|Z|,|Z'|\leq q$ and $U,U'\in \mathcal{E}_{x_0}$, 
\begin{multline}
\left | \left \langle \big(e^{-u\LL_{t}}-e^{-u\LL_{0}
}\big)(Z,Z') U, U' \right \rangle \right. \\
-\int_{T_{x_0}B\times T_{x_0}B}  \left \langle \big(e^{-u\LL_{t}}-e^{-u\LL_{0}
}\big)(Z-W,Z'-W') U, U' \right \rangle \\
\left. \times \frac{1}{\nu^{4n-2d}} \phi(W/\nu)\phi(W'/\nu) dv_{TX}(W)dv_{TX}(W') \right| \leq C\nu (1+q)^N |U||U'|.
\end{multline}

Moreover, by \eqref{norme_B}, we have
\begin{multline}
\left | \int_{T_{x_0}B\times T_{x_0}B}  \left \langle \big(e^{-u\LL_{t}}-e^{-u\LL_{0}
}\big)(Z-W,Z'-W') U, U' \right \rangle \right. \\
\left. \times \frac{1}{\nu^{4n-2d}} \phi(W/\nu)\phi(W'/\nu) dv_{TX}(W)dv_{TX}(W') \right| \leq \frac{Ct}{\nu^{2n-d}}(1+q)^N |U||U'|.
\end{multline}
Hence, taking $\nu=t^{1/(2n-d+1)}$ we find that there is $C>0$ and $K\in \N$ such that for any $t\in\, ]0,t_0]$, $Z,Z'\in B^{B_0}(0,q)$, 
\begin{equation}
 \Big|  \big(e^{-u\LL_{t}}-e^{-u\LL_{0}}\big)(Z,Z')\Big| \leq Ct^{1/(2n-d+1)}(1+q )^{K}.
\end{equation}
In particular, we have
\begin{equation}
\label{t=0-et-0}
e^{-u\LL_{t}}\big|_{t=0} = e^{-u\LL_{0}}.
\end{equation}

From Theorem \ref{estimationderiveesnoyaudeLpu}, \eqref{t=0-et-0} and the  formula
\begin{equation}
G(t)-G(0)=\int_0^t G'(s)ds,
\end{equation}
we get \eqref{cvcenoyauLup->noyauLuinfini:eq}.
\end{proof}

\begin{rem}
 As we have estimates on every derivatives of $e^{-u\LL_{t}}(Z,Z')$, we can in fact use the same method as in Theorem \ref{cvcenoyauLup->noyauLuinfini} to get an asymptotic expansion at every order of $e^{-u\LL_{t}}(Z,Z')$.
\end{rem}

\subsection{Computation of the limiting heat kernel}
\label{Sect-calcul}

 In this section, we will evaluate the limiting heat kernel $e^{-u\LL_{0}}((0,Z^\perp),(0,Z^\perp))$ for $(0,Z^\perp)\in T_{x_0}B$ and thus obtain Theorem \ref{thm-limit-near}.

Recall that we have the following splitting of vector bundle over $P$, which is orthogonal for both $b^L$ and $g^{TM}$ (see \eqref{g^TM|_P} and \eqref{decompo-TU|P}):
\begin{equation}
TU = T^HP \oplus TY \oplus JTY.
\end{equation}
Note also that by \eqref{def-b^L} and \eqref{def-J}, we have
\begin{equation}
b^L(\cdot \,, \cdot) = \langle(-J\boldrm{J})\cdot \,, \cdot\rangle,
\end{equation}
and thus $-J\boldrm{J}$ preserves both $TY$ and $JTY$ on $P$. In particular, on $P$,  $\boldrm{J}$ intertwines $TY$ and $JTY$, and is invertible on $TY\oplus JTY$ because $g^{TM}$ and $b^L$ are definite positive on this bundle. Thus,
\begin{equation}
\label{J2TY=TY}
\boldrm{J}^2TY=TY\, , \quad \boldrm{J}TY=JTY \,,\quad \boldrm{J}T^HP =JT^HP= T^HP.
\end{equation}
Thus, $\boldrm{J}$ induces naturally $\boldrm{J}_G\in \End(TM_G)$, and  we see with \eqref{id-NG} that $(\boldrm{J}TY)_B|_{M_G}$ is the orthogonal complement of $TM_G$ in $TB$. We will identify the normal bundle $N_G$ of $M_G$ in $B$ with $(\boldrm{J}TY)_B|_{M_G}$. From this fact and \eqref{J2TY=TY}, we know that for $U,V\in T_{x_0}B$,
\begin{equation}
\label{omega-et-PTMG}
\omega(U^H,V^H)=\omega_G(P^{TM_G}U,P^{TM_G}V).
\end{equation}

From the above discussion, we can diagonalize $\boldrm{J}$ on $(T^HP)^{(1,0)}$ and $(TY\oplus JTY)^{(1,0)}$, and we thus can get orthonormal basis $\{w_j^0\}_{j=1}^{n-d}$ and $\{e_i^\perp\}_{i=1}^d$  of $T^{(1,0)}_{x_0}M_G$ and $N_{G,x_0}=(\boldrm{J}TY)_{B,x_0}\subset TB$ respectively such that in these basis
\begin{equation}
\label{def-aj}
\left\{
\begin{aligned}
&\boldrm{J}|_{T^{(1,0)}_{x_0}M_G} = \frac{\ic}{2\pi} \mathrm{diag}(a_1^0,\dots,a_{n-d}^0), \\
&\boldrm{J}^2|_{N_{G,x_0}} = -\frac{1}{4\pi^2} \mathrm{diag}(a_1^{\perp,2},\dots,a_d^{\perp,2}),
\end{aligned}
\right.
\end{equation}
where $a_j^0\, \in \R$ and $a_j^\perp\in \R^*$ are the respective eigenvalues of $-2\ic\pi \boldrm{J}|_{(T^HP)^{(1,0)}}$ and $-2\ic\pi \boldrm{J}|_{(TY\oplus JTY)^{(1,0)}}$. Let $\{w^{0,j}\}_{j=1}^{n-d}$ and $\{e^{\perp,i}\}_{i=1}^d$ be their dual basis. We also set
\begin{equation}
 e^0_{2j-1}=\frac{1}{\sqrt{2}}(w_j^0+\bw_j^0) \quad \text{ and } \quad e^0_{2j}=\frac{\ic}{\sqrt{2}}(w_j^0-\bw_j^0).
\end{equation}
Then $\{e_i^0\}_{i=1}^{2n-2d}$ is an orthonormal basis of $T_{x_0}M_G$.

From now on, we will use the coordinates in Section \ref{Sect-rescaling} induced by the above basis as in \eqref{R2n=TM}.

We denote by $Z^0=(Z_1^0,\dots,Z_{2n-2d}^0)$ and $Z^\perp=(Z_1^\perp,\dots, Z_d^\perp)$ the elements in $T_{x_0}M_G$ and $N_{G,x_0}$. Then $Z\in T_{x_0}B$ can be decomposed as $Z=(Z^0,Z^\perp)$. We will also use the complex coordinates $z^0=(z_1^0,\dots,z_{n-d}^0)$, so that
\begin{equation}
\begin{aligned}
&Z^0=z^0+\ol{z}^0, \\
&w_j^0 = \sqrt{2}\derpar{}{z_j^0}\: , \quad \bw_j^0 = \sqrt{2}\derpar{}{\ol{z}_j^0},\\
&e_{2j-1}^0 = \derpar{}{z_j^0}+\derpar{}{\ol{z}_j^0} \: , \quad e_{2j}^0= \ic(\derpar{}{z_j^0}-\derpar{}{\ol{z}_j^0}).
\end{aligned}
\end{equation}
When we consider $z^0$ or $\ol{z}^0$ as vector fields, we identify them with $\sum_j z_j^0\derpar{}{z_j^0}$ and $\sum_j \ol{z}_j^0\derpar{}{\ol{z}_j^0}$. Note that 
\begin{equation}
\Big| \derpar{}{z_j^0}\Big|^2=\Big| \derpar{}{\ol{z}_j^0}\Big|^2=\frac{1}{2} \quad \text{and} \quad |z^0|^2=|\ol{z}^0|^2=\frac{1}{2}|Z^0|^2.
\end{equation}

Set
\begin{equation}
\label{def-L}
\LL = -\sum_{i=1}^{2n-2d}(\n_{0,e_i^0})^2 - \sum_{j=1}^{n-d}a_j^0\,,
\end{equation}
and recall that
\begin{equation}
\LL^\perp = -\sum_{i=1}^{d}\left((\n_{e_i^\perp})^2-|a_i^\perp Z_i^\perp|^2\right) - \sum_{j=1}^{d}a_j^\perp.
\end{equation}
As in \cite[(3.11) and (3.13)]{ma-zhang}, we can show using \eqref{courbure-sur-MG}, \eqref{def-rescaled} and \eqref{omega-et-PTMG},  that 
\begin{equation}
\label{L0=L+Lperp+omegad}
\begin{aligned}
&R^{L_B}_{x_0}(U,V) = -2\pi \ic \langle \boldrm{J}P^{TM_G}U,P^{TM_G}V\rangle, \\
&\LL_0 = \LL + \LL^\perp - 2\omega_{d}(x_0),
\end{aligned}
\end{equation}

Thus, 
\begin{equation}
\label{chaleur-L0-L-et-Lperp}
e^{-u\LL_0}(Z,Z') = e^{-u\LL}(Z^0,Z'^0)e^{-u\LL^\perp}(Z^\perp,Z'^\perp)e^{2u\omega_{d}(x_0)}.
\end{equation}
Moreover, using \eqref{def-aj}, \eqref{def-L}, \eqref{L0=L+Lperp+omegad} and the formula for the heat kernel of a harmonic oscillator (see \cite[(E.2.4), (E.2.5)]{ma-marinescu} for instance), we find (with the convention of Theorem \ref{thm-limit-near}):
\begin{equation}
\label{chaleur-L}
e^{-u\LL}(0,0)=\frac{1}{(2\pi)^{n-d}}\frac{\det(\dot{R}_x^{L_G})}{\det\big(1-\exp(-2u\dot{R}_x^{L_G})\big)}.
\end{equation}

We can now prove Theorem \ref{thm-limit-near}. We fix $u>0$.

Let $s\in \smooth_c(B_0,\mathcal{E}_{x_0})$. Then by \eqref{def-kappa} and \eqref{def-rescaled}
\begin{align}
e^{-u\LL_{t}}s(Z) &=S_t^{-1}\kappa^{1/2}e^{-\frac{u}{p}\Phi D_p^{M_0,2}\Phi^{-1}}\kappa^{-1/2}S_t(Z) \notag \\
&=   \kappa(tZ) \int_{\R^{2n-d}}e^{-\frac{u}{p}\Phi D_p^{M_0,2}\Phi^{-1}}(tZ,Z')(S_ts)(Z')\kappa^{1/2}(Z') dv_{TX}(Z')  \\
&=p^{-n+d/2} \kappa(tZ) \int_{\R^{2n-d}}e^{-\frac{u}{p}\Phi D_p^{M_0,2}\Phi^{-1}}(tZ,tZ'')s(Z'')\kappa^{1/2}(tZ'') dv_{TX}(Z''),\notag
\end{align}
which yields to
\begin{equation}
\label{noyauxdeMupetdeLup}
e^{-u\LL_{t}}(Z,Z')=p^{-n+d/2} e^{-\frac{u}{p}\Phi D_p^{M_0,2}\Phi^{-1}}(tZ,tZ')\kappa^{1/2}(tZ)\kappa^{-1/2}(tZ').
\end{equation}

On the other hand, for  $s\in \smooth_c(B_0,(\E_{0,p})_{B_0})$ and $v\in M_0$,
\begin{equation}
\begin{aligned}
\left(e^{-\frac{u}{p}\Phi D_p^{M_0,2}\Phi^{-1}}s\right)(\pi(v)) &= \left(\Phi e^{-\frac{u}{p} D_p^{M_0,2}}\Phi^{-1}s\right)(\pi(v))\\
&=h(v)\int_{M_0} e^{-\frac{u}{p} D_p^{M_0,2}}(v,v') h^{-1}(v')s(v')dv_{M_0}(v') \\
&=h(v) \int_{B_0} e^{-\frac{u}{p} D_p^{M_0,2}}(v,y') h(y')s(y')dv_{B_0}(y'),
\end{aligned}
\end{equation}
thus we find
\begin{equation}
\label{noyau-et-Phi}
h(v)h(v')\big(P_Ge^{-\frac{u}{p}D_p^{M_0,2}}P_G\big)(v,v') = e^{-\frac{u}{p}\Phi D_p^{M_0,2}\Phi^{-1}}(\pi(v),\pi(v')).
\end{equation}

Let $v = (g,Z) \in U\simeq G\times B^{T_{x_0}B}(0,\e)$. We suppose that in the decomposition $Z=Z^0+Z^\perp$, we have $Z^0=0$. Then from Corollary \ref{noyau-Lp-et-LpM0-sur-invariant}, Theorem \ref{cvcenoyauLup->noyauLuinfini}, \eqref{noyauxdeMupetdeLup}, and  \eqref{noyau-et-Phi}, we find that for any $m,m'\in \N$, there exists $C>0$ (independent of $Z^\perp$) such 
\begin{multline}
\label{chaleur-Lp-et-L0}
 \Big| p^{-n+d/2}h(v)h(v)\big(P_Ge^{-\frac{u}{p}D_p^2}P_G\big)(v,v) - \kappa^{-1}(Z^\perp)e^{-u\LL_0}(\sqrt{p}Z^\perp,\sqrt{p}Z^\perp) \Big|_{\mathscr{C}^{m'}(M_G)} \\
\leq Cp^{-1/2}  \big(1+\sqrt{p}|Z^\perp|\big)^{-m}.
\end{multline}

Now, for $v\in U$, we write  as in the Introduction of this paper $v=(y,Z^\perp)$ with $y\in P$ and $Z^\perp\in N_{P/U,y}$. Let $x=\pi(y)\in M_G$. Then we do the procedure of Sections \ref{Sect-rescaling} and \ref{Sect-calcul} with $x_0=x$ and $y_0=y$. Then Theorem \ref{thm-limit-near} follows from \eqref{chaleur-L0-L-et-Lperp}, \eqref{chaleur-L} and \eqref{chaleur-Lp-et-L0} applied to $Z=(0,Z^\perp)\in T_{x_0}B=T_{x_0}M_G\oplus N_{G,x_0}$.

\section{Proof of the inequalities}
\label{Sect-proof-IMI}

In this Section, we prove our main results: Theorems \ref{IMI} and \ref{IMI-general}. In Section \ref{Sect-proof} we prove Theorem \ref{Euler-et-chaleur} and, as a consequence, we obtain the $G$-invariant holomorphic Morse inequalities in the case of a free $G$-action on $P$. Then, we explain in Section \ref{Sect-action-loc-libre} how to modify the arguments in Sections  \ref{Sect-AD-heat-kernel} and \ref{Sect-proof} to get our inequalities under Assumption \ref{assum-regular-value} in full generality. Finally, in Section \ref{autres-composantes}, we apply Theorem \ref{IMI-general} to get estimates on the other isotypic components of the cohomology $H^\bullet(M,L^p\otimes E)$.

\subsection{Proof of Theorem \ref{IMI} when $G$ acts freely on $P$}
\label{Sect-proof}

We assume in this Section that $G$ acts freely on $P$ and $\ol{U}$. We keep here the notations of Sections \ref{Sect-AD-heat-kernel}.

In this section, we will first prove Theorem  \ref{Euler-et-chaleur}, and then show how to use it in conjunction with the convergence of the heat kernel of the rescaled operator to get Theorem \ref{IMI}. The method is inspired by \cite{MR886814} (see also \cite[Sect. 1.7]{ma-marinescu}).

For $0\leq q \leq n$, set
\begin{equation}
b_q^{p,G} = \dim H^q(M,L^p\otimes E)^G.
\end{equation}
By Hodge theory, there is a $G$-equivariant isomorphism $H^\bullet (M,L^p \otimes E) \simeq \ker D_p^2$, and in particular we get for the invariant part:
\begin{equation}
\label{Hodge}
H^\bullet (M,L^p \otimes E)^G \simeq (\ker D_p^2)^G \quad \text{and }\quad b_q^{p,G} = \dim (\ker D_p^2)^G.
\end{equation}

 We begin by proving Theorem  \ref{Euler-et-chaleur}.

\begin{proof}[Proof of Theorem  \ref{Euler-et-chaleur}]
If $\lambda$ is an eigenvalue of $D_p^2$ acting on $\Omega^{0,j}(M,L^p\otimes E)^G$, we denote by $F_j^\lambda$ the corresponding finite-dimensional eigenspace. As $\db^{L^p\otimes E}$ and $\db^{L^p\otimes E,*}$ act on $\Omega^{0,j}(M,L^p\otimes E)^G$ and commute with $D_p^2$, we deduce that 
\begin{equation}
\db^{L^p\otimes E}(F^\lambda_j)\subset F^\lambda_{j+1} \qquad \text{and} \qquad  \db^{L^p\otimes E,*}(F^\lambda_j)\subset F^\lambda_{j-1}.
\end{equation}
 
As a consequence, we have a complexe
\begin{equation}
\label{complexe-espace-propre}
0\longrightarrow F^\lambda_0 \overset{\db^{L^p\otimes E}}{\longrightarrow}F^\lambda_1 \overset{\db^{L^p\otimes E}}{\longrightarrow} \cdots \overset{\db^{L^p\otimes E}}{\longrightarrow}F^\lambda_n \longrightarrow 0.
\end{equation}
If $\lambda=0$, we have  $F^0_j \simeq  H^j(M,L^p\otimes E)^G$ by \eqref{Hodge}. If $\lambda>0$, then the complex \eqref{complexe-espace-propre} is exact. Indeed, if $\db^{L^p\otimes E}s=0$  and $s\in F^\lambda_j$, then
\begin{equation}
s=\lambda^{-1}D_p^2s = \lambda^{-1}\db^{L^p\otimes E}\db^{L^p\otimes E,*}s \in \Im(\db^{L^p\otimes E}).
\end{equation}
In particular, we get for $\lambda>0$
\begin{equation}
\label{sum-dim(Flambdaj)}
\sum_{j=0}^q(-1)^{q-j}\dim \, F^\lambda_j = \dim \big( \db^{L^p\otimes E}(F^\lambda_q) \big)\geq 0,
\end{equation}
with equality if $q=n$. 

Now, 
\begin{equation}
\label{trace-et-vap}
\tr_j[P_Ge^{-\frac{u}{p}D_p^2}P_G]=b_j^{p,G} +\sum_{\lambda>0}e^{-\frac{u}{p}\lambda}\dim\, F^\lambda_j.
\end{equation}
Thus, \eqref{sum-dim(Flambdaj)} and \eqref{trace-et-vap} entail \eqref{Euler-et-chaleur-eq}.

Note that this proof does not depend on the metric we chose on $TM$, so we get \eqref{Euler-et-chaleur-eq} in general.
\end{proof}

We denote by $\tr_{\Lambda^{0,q}}$ the trace on  $\Lambda^{0,q}(T^*M)\otimes L^p \otimes E$ or $\Lambda^{0,q}(T^*M)$. We know that
\begin{equation}
\tr_q[P_Ge^{-\frac{u}{p}D_p^2}P_G]=\int_M \tr_{\Lambda^{0,q}}\left[\big(P_Ge^{-\frac{u}{p}D_p^2}P_G\big)(v,v)\right]dv_M(v).
\end{equation}
With Theorem \ref{thm-estimate-away} and \eqref{chaleur-sur-invariant}, we in fact have
\begin{equation}
\label{trace=trace-sur-U}
\tr_q[P_Ge^{-\frac{u}{p}D_p^2}P_G]=\int_U \tr_{\Lambda^{0,q}}\left[\big(P_Ge^{-\frac{u}{p}D_p^2}P_G\big)(v,v)\right]dv_M(v)+ O(p^{-\infty}).
\end{equation}

By Theorems \ref{Euler-et-chaleur} and \ref{thm-limit-near}, \eqref{trace=trace-sur-U}, and using the change of variable $Z^\perp\leftrightarrow \sqrt{p}Z^\perp$, we deduce that for every $u>0$,
\begin{multline}
\label{caractŽristique-et-integrale-1}
 p^{-n+d} \sum_{j=0}^q (-1)^{q-j}b_{j}^{p,G}   \leq\\
  \frac{\rank(E)}{(2\pi)^{n-d}} \int_{x\in M_G,\, |Z^\perp|\leq\sqrt{p} \e} \!\frac{\det(\dot{R}_x^{L_G})\sum_{j=0}^q(-1)^{q-j}\tr_{\Lambda^{0,j}}[e^{2u\omega_{d}(x)}]}{\det\big(1-\exp(-2u\dot{R}_x^{L_G})\big)} e^{-u\LL^\perp_x}(Z^\perp,Z^\perp) dv_{TB}(x,Z^\perp) \\
  +o(1).
\end{multline}

For $u>0$, set
\begin{equation}
f(u)= \frac{1}{\tanh(2u)}-\frac{1}{\sinh(2u)}.
\end{equation}
 Then there is $c>0$ such that for $u> 1$, $f(u)> c$, and $f(u)\limarrow{u}{\pm\infty}\pm1$.
By \eqref{def-Lperp} and Mehler's formula (see \cite[Thm. E.1.4]{ma-marinescu} for instance), we know that
\begin{equation}
\label{noyau-Lperp(Zperp,Zperp)}
e^{-u\LL^\perp_x}(Z^\perp,Z^\perp) = \prod_{i=1}^d \sqrt{\frac{a_i^\perp}{\pi(1-e^{-4ua_i^\perp})}}\exp\left\{ -a_i^\perp f(ua_i^\perp) Z_i^{\perp,2} \right\}
\end{equation}
Thus, as $a_i^\perp f(ua_i^\perp)>0$,
\begin{equation}
\label{integrale-sur-NG}
\begin{aligned}
 \int_{ |Z^\perp|\leq\sqrt{p} \e}e^{-u\LL^\perp_x}(Z^\perp,Z^\perp)dv_{N_{G,x}}(Z^\perp) &=  \int_{ \R^d}e^{-u\LL^\perp_x}(Z^\perp,Z^\perp)dv_{N_{G,x}}(Z^\perp)   +O(p^{-\infty}) \\
&=\prod_{i=1}^d \sqrt{\frac{1}{ f(ua_i^\perp)(1-e^{-4ua_i^\perp})}}+O(p^{-\infty}) .
\end{aligned}
\end{equation}

Let $\{w_j^0\}$ be a local orthonormal frame of $T^{(1,0)}M_G$ such that $\dot{R}^{L_G}w_j^0=a_j^0\,w_j^0$ (see \eqref{def-aj}). Its dual frame is denoted by $\{w^{0,j}\}$. Then
\begin{equation}
\omega_{G,d} = -\sum_{j=1}^{n-d} a_j^0\, \bw^{0,j}\wedge i_{\bw_j^0}.
\end{equation}
We again denote by $w_j^0$ the horizontal lift of $w_j^0$ in $T^HP$. In the same way, let $\{w_j^\perp\}$ be a local orthonormal frame of $(TY\oplus JTY)^{(1,0)}$ such that $\dot{R}^{L}w_j^\perp=a_j^\perp\, w_j^0$ (see Section \ref{Sect-calcul}). Its dual frame is denoted by $\{w^{\perp,j}\}$. Then
\begin{equation}
\omega_{d} = -\sum_{j=1}^{n-d} a_j^0\, \bw^{0,j}\wedge i_{\bw_j^0} -\sum_{j=1}^{d} a_j^\perp\, \bw^{\perp,j}\wedge i_{\bw_j^\perp}.
\end{equation}

Thus, writing $\{w_j\}=\{w_j^0,w_j^\perp\}$ and $\{a_j\}=\{a_j^0\,,a_j^\perp\}$, we get
\begin{equation}
e^{2u\omega_{d} } = 1+\sum_j(e^{-2ua_j}-1)\bw^{j}\wedge i_{\bw_j},
\end{equation}
and
\begin{equation}
\label{trace(exp(omegad))}
\tr_{\Lambda^{0,q}}[e^{2u\omega_{d}}]=\sum_{j_1<\dots<j_q}\exp\Big(-2u\sum_{k=1}^qa_{j_k}\Big).
\end{equation}
In particular, there exist $C>0$ such that for $x\in M_G$, $u>1$ and $0\leq q \leq n$,
\begin{equation}
\label{domination}
\left|\frac{\det(\dot{R}_x^{L_G})\tr_{\Lambda^{0,q}}[e^{2u\omega_{d}(x)}]}{\det\big(1-\exp(-2u\dot{R}_x^{L_G})\big)}\prod_{i=1}^d\sqrt{\frac{1}{ f(ua_i^\perp)(1-e^{-4ua_i^\perp})}}\right|\leq C.
\end{equation}

On the other hand the signature of $b^L$ on $JTY$ is the same as on $TY$ (i.e., $(r,d-r)$), so by Lemma \ref{bL-ND} and \eqref{courbure-sur-MG}, \eqref{decompo-TU|P} and \eqref{noyau-descente} we have for $0\leq q \leq n$
\begin{equation}
\label{M(q)-et-MG(q)}
\pi(P\cap M( q)) = M_G(q-r),
\end{equation}
where $M(\leq q)$ is define in an analogue way as $M_G(\leq q)$ in the introduction. Thus, by \eqref{trace(exp(omegad))} and \eqref{M(q)-et-MG(q)},
\begin{multline}
\label{lim-infini-partie-MG}
\lim_{u\to+\infty}\frac{\det(\dot{R}_x^{L_G})\tr_{\Lambda^{0,q}}[e^{2u\omega_{d}(x)}]}{\det\big(1-\exp(-2u\dot{R}_x^{L_G})\big)}\prod_{i=1}^d\sqrt{\frac{1}{ f(ua_i^\perp)(1-e^{-4ua_i^\perp})}}\\
=\mathbf{1}_{M_G(q-r)}(x)(-1)^{q-r}\det(\dot{R}^{L_G}),
\end{multline}
where the function $\mathbf{1}_{S}$ takes the value 1 on $S$ and 0 elsewhere.

Using \eqref{caractŽristique-et-integrale-1}, \eqref{integrale-sur-NG}, \eqref{domination}, \eqref{lim-infini-partie-MG} and dominated convergence as $u\to+\infty$, we find
\begin{equation}
\label{presque-main-thm}
\begin{aligned}
&\limsup_{p\to+\infty} p^{-n+d} \sum_{j=0}^q (-1)^{q-j}b_{j}^{p,G} \leq  \frac{\rank(E)}{(2\pi)^{n-d}}
 \prod_{i=1}^d \sqrt{\frac{1}{ f(ua_i^\perp)(1-e^{-4ua_i^\perp})}}\times \\
&\qquad\qquad\qquad\qquad\int_{M_G} \frac{\det(\dot{R}_x^{L_G})\sum_{j=0}^q(-1)^{q-j}\tr_{\Lambda^{0,j}}[e^{2u\omega_{G,d}(x)}]}{\det\big(1-\exp(-2u\dot{R}_x^{L_G})\big)}  dv_{M_G}(x)\\
&\quad\leq (-1)^{q-r}  \int_{M_G(\leq q-r)}\det\Big(\frac{\dot{R}^{L_G}_x}{2\pi}\Big)dv_{M_G}(x).
\end{aligned}
\end{equation}

Finally, note that 
\begin{equation}
\label{dvMG-et-omegaG}
\det\Big(\frac{\dot{R}^{L_G}_x}{2\pi}\Big)dv_{M_G}(x) = \Big(\frac{\ic}{2\pi}R^{L_G}\Big)^{n-d}/(n-d)! = \frac{\omega_G^{n-d}}{(n-d)!}.
\end{equation}
Then \eqref{presque-main-thm} and \eqref{dvMG-et-omegaG} entail Theorem \ref{IMI}.


\subsection{The case of a locally free action}
\label{Sect-action-loc-libre}

In this section, we prove Theorem \ref{IMI} under Assumption \ref{assum-regular-value}. In particular, the action of $G$ on $P$ and $\ol{U}$ is only locally free, and thus $M_G$ and $B$ are orbifolds. The proof relies on a similar method as the case of a free $G$-action, but the main difference is that we need to work off-diagonal to get uniform estimates near the orbifold singularities. We explain below how to adapt the arguments in Sections  \ref{Sect-AD-heat-kernel} and \ref{Sect-proof} to get the general result.

Recall that $G^0=\{g\in G \: :  \:  g\cdot x=x \text{ for any }x\in M\}$. Then $G^0$ is a finite normal subgroup of $G$ and the quotient $G/G^0$ acts effectively on $M$.

It is a well-known fact that if $\phi\colon (M,g^{TM}) \to (M,g^{TM})$ is an isometry and $x\in M$ is a point such that $\phi(x)=x$ and $d\phi_x =\Id_{T_xM}$ then $\phi=\Id_M$. In particular, suppose that  $g\in G$ satisfies $g|_P=\Id_P$. Then we have for $x\in P$: $gx=x$, $dg_x|_{T_xP}=\Id_{T_xP}$ and $g$ preserves $J$ so $dg_x|_{JT_xP}=\Id_{JT_xP}$. As $TP+JTP=TM$, we deduce that $g$ acts as the identity on $M$. Thus,
\begin{equation}
\label{G0(M)=G0(P)}
G^0=\{g\in G \: :  \:  g\cdot x=x \text{ for any }x\in P\}.
\end{equation}

Recall that  the function $h$ defined in \eqref{def-h} is smooth only on the regular part of $B$ and we have denoted by $\wh{h}$ its smooth extension from the regular part of $B$ to $B$.

First, we need to modify Section \ref{Sect-rescaling} as follows.

Recall that $TM$ is endowed with a metric $g^{TM}$ satisfying \eqref{g^TM|_P}. We identify the normal bundle $N$ of $P$ in $U$ to the orthogonal complement of $TP$. By \eqref{TP=TY+THP} and \eqref{g^TM|_P}, this means that $N$ is identified with $JTY$. By  \eqref{TP=TY+THP} and \eqref{T^HU=JTP-eq}, we have in particular $ T^HU=T^HP\oplus N$.

Let $g^{TY}$, $g^{T^HU}$  be the restriction of $g^{TM}$ on $TY$, $T^HU$. Let $g^{TB}$ (resp. $g^{TM_G}$) be the metric on $TB$ (resp. $TM_G$) induced by $g^{T^HU}$ (resp. $g^{T^HP}$).

Here, unlike in Section \ref{Sect-AD-heat-kernel}, we will not work on the quotient $B$ but directly on $M$. Let  $\n^{TB}$ be the Levi-Civita connection on $(TB,g^{TB})$. Let $P^{N}$ and $P^{T^HP}$ be the orthogonal projections from $T^HU|_{P}$ to $N$ and $T^HP$ respectively. Set
\begin{equation}
\begin{aligned}
& \n^{T^HU} = \pi^*\n^{TB}, && \n^{N} = P^{N}(\n^{T^HU}|_{P})P^{N}, \\
 &\n^{T^HP} = P^{T^HP}(\n^{T^HU}|_{P})P^{T^HP}, && {}^0\n^{T^HU} = \n^{N}\oplus \n^{T^HP}.
\end{aligned}
\end{equation}

Fix $y_0 \in P$. For $V\in T^HU$ (resp. $T^HP$), we define $t \mapsto x_t=\exp_{y_0}^{T^HU}(tV)\in U$ (resp. $\exp_{y_0}^{T^HP}(tV)\in P$) the curve such that $x_0 = y_0$, $\dot{x}_0=V$, $\dot{x} \in T^HU$ and $\n^{T^HU}_{\dot{x}}\dot{x}=0$ (resp. $\dot{x} \in T^HP$ and $\n^{T^HP}_{\dot{x}}\dot{x}=0$). For $W\in T^HP$ small and $V\in N_{y_0}$, let $\tau_W V$ be the parallel transport of $V$ with respect to $\n^{N}$ along to curve $t\in [0,1] \mapsto  \exp_{y_0}^{T^HP}(tW)$.

As in Section \ref{Sect-rescaling}, we identify $B^{T_{y_0}^HU}(0,\e)$ to a subset of $U$ as follows: for $Z\in B^{T_{y_0}^HU}(0,\e)$, we decompose $Z$ as $Z=Z^0+Z^\perp$ with $Z^0\in T_{y_0}^HP$ and $Z^\perp\in N_{y_0}$, and then we identify $Z$ with $\exp^{T^HU}_{\exp^{T^HP}_{y_0}(Z^0)}(\tau_{Z^0} Z^\perp)$. 

Moreover, if  $G_{y_0} = \{g\in G \: : \: gy_0=y_0\}$ is the stabilizer of $y_0$ and $g\in G_{y_0}$, we can decompose $T_{y_0}^HP$ as 
\begin{equation}
T_{y_0}^HP=(T_{y_0}^HP)^g\oplus \mathcal{N}_{y_0,g} ,
\end{equation}
where $(T_{y_0}^HP)^g$ is the fixed point-set of $g$ in $T_{y_0}^HP=T_{y_0}P\cap JT_{y_0}P$ and $\mathcal{N}_{y_0,g}$ is its orthogonal complement. Hence we get, for each $g\in G_{y_0}$, a decomposition of the coordinate $Z^0$ as $Z^0= Z^0_{1,g}+Z^0_{2,g}$ with $Z^0_{1,g}\in (T_{y_0}^HP)^g$ and $Z^0_{2,g}\in \mathcal{N}_{y_0,g}$. Note that $\mathrm{rk}(\mathcal{N}_{y_0,g})=0$ if and only if $g\in G^0$.

Observe that $U\simeq G\cdot B^{T_{y_0}^HU}(0,\e) =G\times_{G_{y_0}}B^{T_{y_0}^HU}(0,\e)$ is a $G$-neighborhood of the orbit $G\cdot y_0$ and $(B^{T_{y_0}^HU}(0,\e),G_{y_0})$ gives local chart on $B$.

As the constructions in Section \ref{Sect-rescaling} are $G_{y_0}$-equivariant, we can extend in the same way the geometric objects from $G\times_{G_{y_0}}B^{T_{y_0}^HU}(0,\e)$ to 
\begin{equation}
M_0:= G\times_{G_{y_0}}\R^{2n-d},
\end{equation}
 where $\R^{2n-d}\simeq T_{y_0}^HU$. Note that Lemma \ref{noyau-Lp-et-LpM0} and Corollary \ref{noyau-Lp-et-LpM0-sur-invariant} still hold, because do not work on the quotient to get them: we only use finite propagation speed of the wave equation on $M$.

Set 
\begin{equation}
\begin{aligned}
&B_0=M_0/G=\R^{2n-d}/G_{y_0}, \\
&\wh{M}_0 = G\times \R^{2n-d}, & \wh{B}_0=\wh{M}_0/G=\R^{2n-d}.
\end{aligned}
\end{equation}
Then we have a covering $\wh{M}_0\to M_0$ (resp. $\wh{B}_0\to B_0$) which gives a (global) orbifold chart on $M_0$ (resp. $B_0$). We can then extend the geometric objects from $M_0$ to $\wh{M}_0$. We will add a hat to denote the corresponding objects on $\wh{B}_0$ or $\wh{M}_0$.  In particular, we have  a Dirac operator $D^{\wh{M}_0}_p$ on $\wh{M}_0$ corresponding to $D^{M_0}_p$ in Section \ref{Sect-rescaling}.

Let $\wh{\pi}_G \colon G\times \R^{2n-d} \to \R^{2n-d}$ be the projection on the second factor. As in \eqref{def-Phi}, we define
\begin{equation}
\wh{\Phi} = \wh{h}\wh{\pi}_G \colon \smooth(G\times \R^{2n-d}, \E_{0,p})^G\to \smooth( \R^{2n-d}, (\E_{0,p})_{\wh{B}_0}).
\end{equation}
We also denote by $\wh{\Phi}$ the map induced from $\smooth(M_0, \E_{0,p})^G$ to $\smooth( B_0, (\E_{0,p})_{B_0})$.

Let $g^{TM_0}$ be defined as in \eqref{gTM0-et-J0} and let $g^{T^HM_0}$ be the metric on $\R^{2n-d}$ induced by $g^{TM_0}$, with corresponding Riemannian volume on $(\R^{2n-d}, g^{T^HM_0})$ denoted by $dv_{T^HM_0}$.

Let $e^{-u\wh{\Phi}D^{\wh{M}_0,2}_p\wh{\Phi}}$ be the heat kernel of the operator $\wh{\Phi}D^{\wh{M}_0,2}_p\wh{\Phi}$ on  $\wh{B}_0$ and $e^{-u\wh{\Phi}D^{\wh{M}_0,2}_p\wh{\Phi}}(Z,Z')$ ($Z,Z'\in \wh{B}_0$) be its smooth kernel with respect to $dv_{T^HM_0}(Z')$. Concerning heat kernels on orbifolds, we refer the reader to \cite[Sect. 2.1]{MaTAMS}. Then we have for $v=[g,Z]$ and $v'=[g',Z']$ in $M_0$,
\begin{equation}
\begin{aligned}
\label{noyau-et-Phi-chapeau}
\wh{h}(v)\wh{h}(v')\big(P_Ge^{-\frac{u}{p}D_p^{M_0,2}}P_G\big)(v,v') &= e^{-\frac{u}{p}\wh{\Phi} D_p^{M_0,2}\wh{\Phi}^{-1}}(\pi(v),\pi(v')) \\
&= \frac{1}{|G^0|}\sum_{g\in G_{y_0}} (g,1)\cdot e^{-\frac{u}{p}\wh{\Phi}D^{\wh{M}_0,2}_p\wh{\Phi}}(g^{-1}Z,Z'),
\end{aligned}
\end{equation}
where $|G^0|$ is the cardinal of $G^0$. Indeed, the first equality in \eqref{noyau-et-Phi-chapeau} is analogous to \eqref{noyau-et-Phi}, and the second from a similar computation as in \cite[(5.19)]{MR2215454} or \cite[(5.4.17)]{ma-marinescu}.

Note that our trivialization of the restriction of $L$ (resp. $E$) on $B^{T_{y_0}^HU}(0,\e)$ is not $G_{y_0}$-invariant, except if $G_{y_0}$ acts trivially on $L_{y_0}$ (resp. $E_{y_0}$). More precisely, let $\wh{M}_{G,0} = \R^{2n-2d}\times \{0\}\subset \wh{B}_0$ and for $g\in G_{y_0}$, let $\wh{M}_{G,0}^g$ be the fixed point-set of $g$ in $\wh{M}_{G,0}$. Then the action of $g$ on $L|_{\wh{M}_{G,0}^g}$ is the multiplication by $e^{i\theta_g}$ and $\theta_g$ is locally constant on $\wh{M}_{G,0}^g$. Likewise, the action of $g$ on $E|_{\wh{M}_{G,0}^g}$ is given by $g_E\in \smooth(\wh{M}_{G,0}^g,\End(E))$ which is parallel with respect to $\n^E$.

Now, as we work on $\wh{B}_0$ and $\wh{M}_0$, we can apply the results of Sections \ref{Sect-rescaling}-\ref{Sect-calcul} to the operator $\wh{\Phi}D^{\wh{M}_0,2}_p\wh{\Phi}$. We will use the same notation as in these sections, and add a subscript to indicate the base-point (e.g., $\kappa_x$, $\LL_{0,x}$, ...). By Theorem \ref{cvcenoyauLup->noyauLuinfini} and \eqref{noyauxdeMupetdeLup}, we obtain for $g\in G_{y_0}$ and $u>0$ fixed
\begin{multline}
\label{chaleur-Lp-et-L0-chapeau-1}
 \Big| p^{-n+d/2}e^{-\frac{u}{p}\wh{\Phi}D^{\wh{M}_0,2}_p\wh{\Phi}}(g^{-1}Z,Z) \\
 - \kappa_{Z_{1,g}}^{-1}(Z^\perp)e^{-u\LL_{0,Z_{1,g}}}\big(\sqrt{p}g^{-1}(Z_{2,g}+Z^\perp),\sqrt{p}(Z_{2,g}+Z^\perp)\big) \Big|_{\mathscr{C}^{m'}(M_G)} \\
\leq Cp^{-1/2}  \big(1+\sqrt{p}|Z_{2,g}|\big)^N\big(1+\sqrt{p}|Z^\perp|\big)^{-m}\exp \big(-Cp\inf_{h\in G_{y_0}}|h^{-1}Z-Z|^2\big).
\end{multline}
On the other hand, note that there is $\rho>0$ such that  for $g\in G_{y_0}$, $|g^{-1}Z-Z|^2 \geq \rho |Z_{2,g}|^2$, so 
\begin{multline}
\label{chaleur-Lp-et-L0-chapeau-2}
 \Big| p^{-n+d/2}e^{-\frac{u}{p}\wh{\Phi}D^{\wh{M}_0,2}_p\wh{\Phi}}(g^{-1}Z,Z) \\
 - \kappa_{Z_{1,g}}^{-1}(Z^\perp)e^{-u\LL_{0,Z_{1,g}}}\big(\sqrt{p}g^{-1}(Z_{2,g}+Z^\perp),\sqrt{p}(Z_{2,g}+Z^\perp)\big) \Big|_{\mathscr{C}^{m'}(M_G)} \\
\leq Cp^{-1/2} \big(1+\sqrt{p}|Z^\perp|\big)^{-m}\exp \big(-C'p|Z_{2,g}|^2\big).
\end{multline}

We can now prove Theorem \ref{IMI-general}. First, observe that Theorem  \ref{Euler-et-chaleur} is still true here because we work on $M$ to prove it in Section \ref{Sect-proof}. Thus, we can use a similar approach to prove Theorem~\ref{IMI-general} as in Section \ref{Sect-proof}.

Note that the estimate \eqref{trace=trace-sur-U} still holds. Consider now a $G$-invariant function $\psi \in \smooth(M)$ such that the induced  function (again denoted by $\psi$) on $B$ is compactly supported in a small neighborhood of $x_0\in M_G$. 

Similarly to \eqref{def-kappa}, we denote by $dv_{T^HU}$ the Riemannian volume of $(T^H_{y_0}U, g^{T^H_{y_0}U})$. Then, as in \eqref{caractŽristique-et-integrale-1}, \eqref{chaleur-Lp-et-L0-chapeau-2} and dominated convergence imply that
\begin{multline}
\label{integral-avec-tous-les-g}
 p^{-n+d} \int_{U} \psi(v)\tr_q\left[ \big(P_Ge^{-\frac{u}{p}D_p^{M_0,2}}P_G\big)(v,v)\right] dv_M(v) =\\
  \frac{1}{|G_{y_0}/G^0||G^0|}\sum_{g\in G_{y_0}} \frac{p^{-\mathrm{rk}(N_{y_0,g})/2}}{(2\pi)^{n-d}} \int_{A(p,\e)} \psi\Big(Z_{1,g}+\frac{Z_{2,g}}{\sqrt{p}}\Big)\tr_q\Bigg[(g,1)\cdot \frac{\det(\dot{R}_{Z_{1,g}}^{L_G})e^{2u\omega_{d}(Z_{1,g})}}{\det\big(1-\exp(-2u\dot{R}_{Z_{1,g}}^{L_G})\big)} \\
   \times e^{-u\LL^\perp_{Z_{1,g}}}\big(g^{-1}(Z_{2,g}+Z^\perp),Z_{2,g}+Z^\perp\big)\otimes \Id_E \Bigg]dv_{T^HU}(Z) 
  +o(1),
\end{multline}
where $A(p,\e)= \{Z\in \wh{B}_0\,:\,|Z_{1,g}|\leq \e,|Z_{2,g}|\leq \e\sqrt{p},|Z^\perp|\leq \e\sqrt{p} \}$. In particular, in \eqref{integral-avec-tous-les-g}, every term involving a $g$ such that $\mathrm{rk}(N_{y_0,g})>0$, i.e., $g\notin G^0$, disappears when we look at the leading term in $p$.

Thus, we now consider $g\in G^0$. The action of $g$ on $M$ and $\Wedge(T^*M)$ is trivial, so we have
\begin{multline}
\label{trace-Omega(0,q)-et-action-de-g}
\tr_q\left[(g,1)\cdot \frac{\det(\dot{R}_{Z^0}^{L_G})e^{2u\omega_{d}(Z^0)}}{\det\big(1-\exp(-2u\dot{R}_{Z^0}^{L_G})\big)} 
   \times e^{-u\LL^\perp_{Z^0}}(g^{-1}Z^\perp,Z^\perp) \otimes \Id_E\right]  \\
 = e^{ip\theta_g}\frac{\det(\dot{R}_{Z^0}^{L_G})\tr_{\Lambda^{0,q}}[e^{2u\omega_{d}(Z^0)}]}{\det\big(1-\exp(-2u\dot{R}_{Z^0}^{L_G})\big)}e^{-u\LL^\perp_{Z^0}}(Z^\perp,Z^\perp)\otimes g_E(Z^0).
\end{multline}

Using \eqref{integral-avec-tous-les-g} and \eqref{trace-Omega(0,q)-et-action-de-g}, we get as in \eqref{integrale-sur-NG}-\eqref{presque-main-thm}:

\begin{align}
\label{presque-main-thm-orbifold}
&\limsup_{p\to+\infty} p^{-n+d} \int_{U} \psi(v) \tr_q\left[\big(P_Ge^{-\frac{u}{p}D_p^2}P_G\big)(v,v)\right]dv_M(v) \notag \\
&\qquad\qquad\qquad\qquad \leq  \frac{1}{(2\pi)^{n-d}} \frac{1}{|G^0|}\sum_{g\in G^0}\prod_{i=1}^d \sqrt{\frac{1}{ f(ua_i^\perp)(1-e^{-4ua_i^\perp})}}\times \notag \\
&\qquad\qquad\qquad\qquad\qquad  \int_{M_G}\psi(x) \frac{\det(\dot{R}_x^{L_G})\sum_{j=0}^q(-1)^{q-j}\tr_{\Lambda^{0,j}}[e^{2u\omega_{G,d}(x)}]}{\det\big(1-\exp(-2u\dot{R}_x^{L_G})\big)}  e^{ip\theta_g}\tr^E[g_E(x)]dv_{M_G}(x)  \\
&\qquad\qquad\qquad\qquad \leq (-1)^{q-r}  \int_{M_G(\leq q-r)}\psi(x)\det\Big(\frac{\dot{R}^{L_G}_x}{2\pi}\Big)\frac{1}{|G^0|}\Big(\sum_{g\in G^0}e^{ip\theta_g}\tr^E[g_E(x)]\Big)dv_{M_G}(x) \notag \\
&\qquad\qquad\qquad\qquad\qquad = (-1)^{q-r} \dim(L^p\otimes E)^{G^0} \int_{M_G(\leq q-r)}\psi(x)\det\Big(\frac{\dot{R}^{L_G}_x}{2\pi}\Big)dv_{M_G}(x).\notag 
\end{align}

Finally, we take some functions $\psi_k$ as $\psi$ above and such that $\sum_k \psi_k=1$ in a neighborhood of $M_G$ in $B$ and we apply \eqref{presque-main-thm-orbifold} for those $\psi_k$. We get Theorem \ref{IMI-general} by taking the sum over $k$ of the obtained estimates and using Theorem \ref{Euler-et-chaleur} and \eqref{trace=trace-sur-U}. 

\subsection{The other isotypic components of the cohomology}
\label{autres-composantes}

In this subsection, we show how to use  Theorem \ref{IMI-general} to get estimates on the other isotypic components of the cohomology $H^\bullet(M,L^p\otimes E)$. 

Let $\mathcal{V}_{\gamma}$ be the finite dimensional irreducible representation of  $G$ with highest weight $\gamma$.

For a representation $F$ of $G$, we denote by $F_{\gamma}$ its isotopic component associated with $\gamma$. Then  we have
\begin{equation}
\label{compo-2pigamma}
\begin{aligned}
H^\bullet(M,L^p\otimes E)_{\gamma} &= \V\otimes \Hom_G(\V,H^\bullet(M,L^p\otimes E))\\
&=\V\otimes\big(H^\bullet(M,L^p\otimes E)\otimes \V^*\big)^G\\
&= \V \otimes H^\bullet(M,L^p\otimes E \otimes \V^*)^G,
\end{aligned}
\end{equation}
where $\V^*$ is viewed as a trivial bundle over $M$.

By Theorem \ref{IMI-general} applied replacing $E$ by $E\otimes \V^*$ and \eqref{compo-2pigamma} we have as $p\to +\infty$, 
 \begin{multline}
\sum_{j=0}^q (-1)^{q-j}\dim H^j(M,L^p\otimes E)_{\gamma}  \\
\leq \dim \V \dim(L^p\otimes E\otimes \V^*)^{G^0} \frac{p^{n-d}}{(n-d)!}  \int_{M_G(\leq q-r)} (-1)^{q-r} \omega_G^{n-d} + o(p^{n-d}),
\end{multline}
with equality for $q=n$.

In particular, we get the weak inequalities
 \begin{multline}
\dim H^q(M,L^p\otimes E)_{\gamma}  \\
\leq \dim \V \dim(L^p\otimes E\otimes \V^*)^{G^0} \frac{p^{n-d}}{(n-d)!}  \int_{M_G( q-r)} (-1)^{q-r} \omega_G^{n-d} + o(p^{n-d}).
\end{multline}


\end{document}